\newtheorem{theorem}{Theorem}[section]
\newtheorem{corollary}[theorem]{Corollary}
\newtheorem{lemma}[theorem]{Lemma}
\newtheorem{prop}[theorem]{Proposition}
\theoremstyle{definition}
\newtheorem{definition}[theorem]{Definition}
\newtheorem{remark}{Remark}
\DeclareMathOperator\sign{sign}
\DeclareMathOperator\supp{supp}
\def\N{\mathbb{N}}
\def\R{\mathbb{R}}
\def\ind{\mathbbm{1}}
\let\O=\Omega
\let\e=\varepsilon
\let\ve=\varepsilon
\let\t=\tilde
\let\mc=\mathcal
\let\O=\Omega
\let\e=\varepsilon
\let\t=\tilde
\let\mc=\mathcal
\def\1{\mathbbm{1}}
\def\t f{\tilde{f}}
\def\d{\,\mathrm{d}}
\def\ve{\varepsilon}
\def\ind{\mathbbm{1}}
\newcommand{\deb}{\rightharpoonup}
\newenvironment{formula}[1]{\begin{equation}\label{#1}}
	{\end{equation}\noindent}
\def\Fi#1{\begin{formula}{#1}}
	\def\Ff{\end{formula}\noindent}
\definecolor{lpink}{rgb}{0.96, 0.76, 0.76}
\definecolor{dpink}{rgb}{0.97, 0.51, 0.47}
\definecolor{sky}{rgb}{0.53, 0.81, 0.92}
\definecolor{salmon}{rgb}{1.0, 0.55, 0.41}
\definecolor{orman}{rgb}{0.24, 0.7, 0.44}
\definecolor{aciksari}{rgb}{0.91, 0.84, 0.42}
\definecolor{dgrey}{rgb}{0.52, 0.52, 0.51}
\definecolor{ao}{rgb}{0.0, 0.5, 0.0}
\definecolor{purple}{rgb}{0.6, 0.4, 0.8}
\definecolor{purple2}{rgb}{0.63, 0.36, 0.94}
\definecolor{purpleheart}{rgb}{0.41, 0.21, 0.61}
\newcommand\blfootnote[1]{%
	\begingroup
	\renewcommand\thefootnote{}\footnote{#1}%
	\addtocounter{footnote}{-1}%
	\endgroup
}
\def\R{\mathbb{R}}
\def\Slo{\mathrm{Slope}F}
\def\d{\,\mathrm{d}}
\def\O{\Omega}
\def\p{\,\partial}
\DeclareMathOperator{\argmin}{argmin}
\author{
	Romain Ducasse\footnote{Université de Paris and Sorbonne Université, CNRS, Laboratoire Jacques-Louis Lions (LJLL), F-75006 Paris, France. ducasse@math.univ-paris-diderot.fr}
	\and  Filippo Santambrogio  \footnote{Institut Camille Jordan, Université Claude Bernard Lyon 1, 43 bd du 11 novembre 1918, 69622 Villeurbanne Cedex, France. santambrogio@math.univ-lyon1.fr \& Institut Universitaire de France}
	\and Havva Yoldaş \footnote{Delft Institute of Applied Mathematics, Faculty of Electrical Engineering, Mathematics and Computer Science, Delft University of Technology, Mekelweg 4, 2628CD Delft, Netherlands. h.yoldas@tudelft.nl}}
\title{A cross-diffusion system obtained via (convex) relaxation in the JKO scheme}
\date{}
\begin{document}
	\maketitle
	\begin{abstract}
		\noindent
		In this paper, we start from a very natural system of cross-diffusion equations, which can be seen as a gradient flow for the Wasserstein distance of a certain functional. Unfortunately, the cross-diffusion system is not well-posed, as a consequence of the fact that the underlying functional is not lower semi-continuous. We then consider the relaxation of the functional, and prove existence of a solution in a suitable sense for the gradient flow of (the relaxed functional). This gradient flow has also a cross-diffusion structure, but the mixture between two different regimes, that are determined by the relaxation, makes this study non-trivial. 
		
	\end{abstract}

	
	\section{Introduction}
	\label{sec:introduction}
	
	\blfootnote{\emph{Key words and phrases.} cross-diffusion, gradient flow, Wasserstein distance, JKO scheme, convex relaxation}
	\blfootnote{\emph{2010 Mathematics Subject Classification.} 35A01; 35A15; 49J45}
	The starting point of this paper is the following very natural system of PDEs
	
	\begin{align} \label{eq:cross_diff_system}
		\begin{split}
			\begin{cases}
				\p_t \rho  = \Delta \rho + \nabla \cdot (\rho \nabla \mu), \quad t>0,\ x\in \O,  \\
				\p_t \mu  = \Delta \mu+ \nabla \cdot (\mu \nabla \rho),\quad t>0,\ x\in \O,  
			\end{cases}
		\end{split}
	\end{align} complemented with no-flux boundary conditions in a bounded domain $\O$. This cross-diffusion system describes the motion of two populations, each subject to diffusion and trying to avoid the presence of the other, so that each density acts as a potential in the evolution equation satisfied by the other.
	
	\medskip
	
	This model was studied in \cite{BRYZ21}, where it is obtained as a continuum version of a discrete lattice model proposed in \cite{AB18}, to account for the territorial development of two competing populations. Similar models appear in mathematical biology to describe the evolution of interacting species that are under the influence of population pressure due to intra- and inter-specific interferences (see e.g. \cite{SKT79, GS14}). However, in these papers, the models considered always enjoy some special structure that ensures some ``convexity'', which is crucially not verified by System \eqref{eq:cross_diff_system}, as we will explain later.
	
	\medskip

	The existence of solutions for \eqref{eq:cross_diff_system} is a very challenging problem. An easy computation shows that the system is parabolic when $\rho\mu<1$ but has an anti-parabolic behavior when $\rho\mu>1$. Existence of solutions for short time is proven by the third author and her collaborators in \cite{BRYZ21}, under the assumption that the initial data satisfy $\rho_0\mu_0<1$. 
	
	\medskip
	A noticeable property of system \eqref{eq:cross_diff_system} is the following: it can be seen as a \emph{gradient flow} for a suitable functional in the Wasserstein space.
	
	\medskip
	
	The usual notion of gradient flows applies to Hilbert spaces but in the last two decades the interest has grown for gradient flows in metric spaces and in particular in the space of probability measures endowed with the Wasserstein distance (see \cite{AGS08} and \cite{S17}), after the seminal work by Jordan, Kinderlehrer and Otto \cite{JKO98}, who found a gradient flow structure for the Fokker-Planck equation. Applying the same ideas not to a single PDE but to a system of PDEs, thus looking for gradient flows in the space of pairs of probability measures, is more recent (see \cite{CFSS18,KM18}) and more delicate.
	
	\medskip
	At a formal level, given a functional $F$ defined on probabilities on a given domain $\Omega$, its gradient flow corresponds to the evolution PDE 
	$$\partial_t\rho=\nabla\cdot \Big (\rho\nabla \Big (\frac{\delta F}{\delta \rho}\Big ) \Big ), $$
	where $\delta F/\delta \rho$ is the first variation of the functional $F$ (formally defined through the condition $F(\rho+\ve\delta\rho)=F(\rho)+\ve\int \frac{\delta F}{\delta\rho}\d\delta\rho+o(\ve)$). This equation is endowed with no-flux boundary conditions on $\partial \Omega$. Analogously, given a functional $F$ defined on pairs $(\rho,\mu)$ of probabilities on the domain $\Omega$, the gradient flow of $F$ in $W_2(\Omega)\times W_2(\Omega)$ would be given by the system
	$$\begin{cases}
		\partial_t\rho=\nabla\cdot (\rho\nabla(\frac{\delta F}{\delta \rho})),\\
		\partial_t\mu=\nabla\cdot(\mu\nabla(\frac{\delta F}{\delta \mu})),
	\end{cases}$$
	with again no-flux on the boundary. Of course, we formally define the two partial first variations via the condition $F(\rho+\ve\delta\rho, \mu+\ve\delta\mu)=F(\rho,\mu)+\ve\int \frac{\delta F}{\delta\rho}\d\delta\rho+\ve\int \frac{\delta F}{\delta\mu}\d\delta\mu+o(\ve)$.
	
	\medskip
	
	Here, it turns out that \eqref{eq:cross_diff_system} is the gradient flow of the following functional
	\begin{align} \label{defn:F_0}
		F_0(\rho, \mu ) =  \begin{cases}
			\int_{\Omega} f_0(\rho(x),\mu(x))\d x \quad &\mbox{ if } \rho,\mu \in L^1_+(\O),\\
			+\infty \quad &\mbox{ otherwise},
		\end{cases} 
	\end{align}
	where the function $f_0$ is given by $f_0(a,b)=a \log a + b\log b +ab$, and $\rho(x)$, $\mu(x)$ stand for the densities of the two measures, which are supposed to be absolutely continuous and identified with their $L^1$ densities.
	
	\medskip
	
	The main problem in considering the function $f_0$ is that it is not globally convex, as we have, denoting $D^2f_0$ the Hessian of $f_0$,
	$$D^2f_0(a,b)=\left(\begin{array}{cc}\frac 1a&1\\1&\frac 1b\end{array}\right),$$
	and this matrix is only positive-definite if $ab<1$. The non-convexity of $f_0$ translates into the non-parabolic behavior of the system outside the region where $\rho\mu<1$.
	
	\medskip
	
	Yet, in terms of the functional $F_0$, the situation is even worse. Indeed, integral functionals with a non-convex integrand are not lower semi-continuous for the weak convergence of probability measures. This is a non-negligible issue when applying variational methods. Indeed, one of the main tools to study gradient flows in a metric setting is the so-called {\it minimizing movement} method (see \cite{DeG93,Amin}). Let us quickly explain this tool in the general case of a metric space, and we specify it to the case of interest here just after.
	
	\medskip

	For a given functional $F$, acting on a metric space $M$ endowed with a distance $d$, the minimizing movement scheme consists in building an approximation of the gradient flow (which is a curve $x(t)$ in $M$) as follows: we fix a time step $\tau>0$ and look for a sequence $x^\tau_k$ such that
	$$x^\tau_{k+1}\in \argmin_x F(x)+\frac{d(x,x^\tau_k)^2}{2\tau}.$$

	In the case where the metric space is the space of propability measures on $\Omega$ and the distance $d$ is the Wasserstein distance, this iterated minimization scheme is known under the name of {\it Jordan-Kinderlehrer-Otto} (JKO) scheme. It produces a sequence of probability measures which, when appropriately interpolated, would approximate the gradient flow of $F$.
	
	\medskip
	
	Here, we will consider the case of a gradient flow in $M = W_2(\Omega)\times W_2(\Omega)$ (and the distance $d$ is the natural product of the Wasserstein distance). The minimizing movement method then consist in solving the following family of minimization problems:
	\begin{equation}\label{JKOsquare}
		(\rho^\tau_{k+1},\mu^\tau_{k+1})\in\argmin_{(\rho,\mu)}F(\rho,\mu)+\frac{W_2^2(\rho,\rho^\tau_k)+W_2^2(\mu,\mu^\tau_k)}{2\tau}.
	\end{equation}
	However, if $F$ is not lower semi-continuous for the weak convergence, which is the case when $F=F_0$ defined above, while the terms in $W_2^2$ are continuous, the existence of a minimizer in the above iterated minimization problem is not always guaranteed. From the variational point of view, it is necessary to replace the functional $F_0$ with its {\it relaxation}, or lower semi-continuous envelope.
	
	\medskip

	We stress that the relation between the gradient flow of a functional and its lower semi-continuous envelope is not clear in terms of PDEs, but it is clearer at the level of the JKO scheme. Indeed, if we fix $\tau>0$ and a tolerance parameter $\delta>0$, when facing a non-lower semi-continuous functional $F$ (whose lower semi-continuous envelope is denoted by $\bar F$), we could define a $\delta$-approximated JKO scheme in the following way: we denote by $\argmin_x^\delta G(x)$ the set of $\delta$-almost minimizers of $G$, i.e. the set $\{x\,:\, G(x)<\inf G +\delta\}$, which is always non-empty, and we pick any sequence $x^{\tau,\delta}_k$ satisfying
	$$x^{\tau,\delta}_{k+1}\in \argmin_x^\delta F(x)+\frac{d(x,x^{\tau,\delta}_k)^2}{2\tau}.$$
	It can be proven, as an application of $\Gamma-$convergence, that, for fixed $\tau$ and letting $\delta\to 0$, any such sequence converges to a sequence satisfying
	$$x^\tau_{k+1}\in \argmin_x \bar F(x)+\frac{d(x,x^\tau_k)^2}{2\tau},$$
	i.e. to the output of the JKO scheme for the lower semi-continuous relaxation $\bar F$.
	
	\medskip 
	
	The present paper is then devoted to the study of the system of PDEs representing the gradient flow of a functional $F$ obtained as the lower semi-continuous envelope of $F_0$, and in particular to an existence result. Let us detail the structure of the paper. As a first task, we need to compute this lower semi-continuous envelope. The general theory about local functionals on measures (see \cite{DeGiorgiLSC}) provides a clear answer: one just needs to replace the non-convex function $f_0$ with its convex envelope, that we will denote by $f$. We therefore compute the convex envelope $f$ of $f_0$ and find that it has the following form: on a certain region $B\subset\R_+^2$ we have $f=f_0$, and on the complement $A=\R_+^2\setminus B$ the function $f$ only depends on the sum of the two variables, i.e. there exists $\tilde f:\R\to\R$ such that $f(a,b)=\tilde f(a+b)$ for $(a,b)\in A$. 
	\medskip
	
	The fact that $f$ partially agrees with a function of the sum only recalls some cross-diffusion problems already studied in other papers using Wasserstein techniques, such as in  \cite{CFSS18,KM18}. Cross-diffusion with a functional only depending on the sum is not difficult to study, as one can first find an equation on the sum $S=\rho+\mu$ and then use the gradient of the solution $S$ as a drift advecting both $\rho$ and $\mu$. The difficulty in most of the cross-diffusion problems involving the sum comes from the lower-order terms which differentiate the two species (reaction, or advection), since the above strategy cannot be applied and in general it is not possible to obtain compactness results on the two densities $\rho$ and $\mu$ separately, but only on their sum. However, when the functional involves the integral of a strictly convex function of the two densities $\rho$ and $\mu$, it is indeed possible to obtain separate compactness: here the challenge comes from combining the two regimes, one where $f$ is strictly convex and one where it only depends on the sum, but without extra terms differentiating the two species.
	
	\medskip
	
	On the other hand, a remark is compulsory when looking at the precise definition of the function $f$, since the region $B$ where $f=f_0$ is strictly included in the set $\{ab<1\}$. Indeed, one could think (and it would have been nice!) that the system we obtain is an extension of the one with $f_0$ (that is, \eqref{eq:cross_diff_system}), in the sense that it allows to extend the solutions even after touching the dangerous curve where $\rho\mu=1$. Unfortunately, this interpretation is not correct since there are initial data satisfying $\rho\mu<1$ but $(\rho,\mu)\notin B$, for which the two systems would have well-defined different solutions, at least for short time.

	As a result, this paper will only be concerned with the gradient flow of the new functional $F$ (the lower semi-continuous envelope of $F_0$, which is, in this case, its convexification as well), without discussing its relation with the original PDE which motivated the study. For this new gradient flow we will prove existence of solutions in a suitable sense.
	
	\medskip
	The notion of solution we consider is inspired by the notion of \emph{EDI solution} introduced in \cite{Amin,AGS08} in terms of the metric slope, but it is slightly different and more PDE-adapted. Given our functional $F$, our goal is to find a solution of the following system
	\begin{equation}\label{weaksystem}
		\begin{cases}
			&\partial_t\rho+\nabla\cdot(\rho v)=0,\\
			&\partial_t\mu+\nabla\cdot (\mu w)=0,\\
			&v=-\nabla\left(\frac{\delta F}{\delta\rho}\right)\quad \rho-a.e.,\\
			&w=-\nabla\left(\frac{\delta F}{\delta\mu}\right) \quad \mu-a.e.,
		\end{cases}
	\end{equation}
	where the two continuity equations are satisfied in a weak sense with no-flux boundary conditions on $\partial\Omega$ and the functions $\frac{\delta F}{\delta\rho}$ and $\frac{\delta F}{\delta\mu}$ are differentiable in a suitable weak sense.
	In our particular case this means finding a pair $(\rho_t,\mu_t)$ such that the gradients of $f_a(\rho_t,\mu_t)$ and $f_b(\rho_t,\mu_t)$ exist in such a sense, and the above equations are satisfied.

	Formally, if the two first conditions of \eqref{weaksystem} are satisfied, then the last two conditions are equivalent, on the interval $[0,T]$, to the following inequality
	\begin{equation}\label{EDIformulation}
		F(\rho_T,\mu_T)+\frac12\int_0^T\int_\Omega \rho \left |\nabla\frac{\delta F}{\delta\rho} \right |^2+
		\frac12\int_0^T\int_\Omega \mu \left |\nabla\frac{\delta F}{\delta\mu} \right |^2+
		\frac12\int_0^T\int_\Omega \rho|v|^2+\frac12\int_0^T\int_\Omega \mu|w|^2\leq F(\rho_0,\mu_0).
	\end{equation}
	Indeed, if we formally differentiate the function $t\mapsto F(\rho_t,\mu_t)$ in time, we obtain
	$$\frac{\d}{\d t}F(\rho_t,\mu_t)=\int_\Omega \rho \nabla\frac{\delta F}{\delta\rho}\cdot v+\int_\Omega \mu \nabla\frac{\delta F}{\delta\mu}\cdot w,  $$
	hence 
	\begin{equation}\label{diffprop}
		F(\rho_0,\mu_0)=F(\rho_T,\mu_T)-\int_0^T\int_\Omega \rho \nabla\frac{\delta F}{\delta\rho}\cdot v-\int_0^T\int_\Omega \mu \nabla\frac{\delta F}{\delta\mu}\cdot w.
	\end{equation}
	This means that \eqref{EDIformulation} can be re-written as
	$$\frac12 \int_0^T\int_\Omega\rho \left |\nabla\frac{\delta F}{\delta\rho}+v \right |^2+\frac12 \int_0^T\int_\Omega\mu \left |\nabla\frac{\delta F}{\delta\mu}+w \right |^2\leq 0,$$
	and the only way to satisfy this condition is to satisfy the a.e. equality of the last two lines in \eqref{weaksystem}.
	
	\medskip 
	This trick to characterize the solutions comes from the Euclidean observation that $x'(t)=-\nabla F(x(t))$ is equivalent to $\frac{\d}{\d t}F(x(t))\leq -\frac12 |x'(t)|^2-\frac12|\nabla F(x(t))|^2$, and hence to the {\it Energy Dissipation Inequality } (EDI)
	$$F(x(T))+\frac12\int_0^T |x'(t)|^2+\frac12\int_0^T|\nabla F(x(t))|^2\leq F(x_0).$$
	The fact that gradients and derivatives cannot be defined in metric spaces (a vector structure is needed) but their norms could be defined (using the so-called metric derivative and metric slope) instead leads to the definition of a notion of \emph{EDI gradient flow in metric spaces}. This is what is done in \cite{AGS08} for general metric spaces (in particular for functionals $F$ which are geodesically convex) and then particularized to the case of the Wasserstein space.
	
	However, this is not the strategy which is followed in our paper, even though what we do is strongly inspired by the metric approach of \cite{AGS08}. Our precise procedure is the following:
	\begin{itemize}
		\item We define a class $\mathcal H$ consisting of pairs $(\rho,\mu)$ where $\nabla f_a(\rho,\mu)$ and $\nabla f_b(\rho,\mu)$ are well-defined. We do this by requiring that some functions of the densities $(\rho,\mu)$ belong to the $H^1$ Sobolev space: in particular, we require that $\eta(\rho,\mu)$ is $H^1$ for any smooth function $\eta$ supported in the set $B$ where $f$ has a strictly convex behavior, and we also require $\sqrt{\rho+\mu}$ to be $H^1$. The second requirement is not sharp, in the sense that other functions of the sum could be used as well. We chose to use this one for simplicity, since we guarantee this condition on the solutions which we build via extra arguments.
		\item We define, on the class $\mc H$, the slope functional $\Slo$ via 
		\begin{equation}\label{roughdefislo}
			\Slo(\rho,\mu):=\int_\Omega \rho|\nabla f_a(\rho,\mu)|^2+\mu|\nabla f_b(\rho,\mu)|^2.
		\end{equation}
		Note that we do not pretend at all that this slope functional coincides with the metric slope which could be defined by following the theory in the first part of the book \cite{AGS08}.
		\item We say that a pair of curves $(\rho,\mu)$ is an EDI solution if $(\rho_t,\mu_t)\in\mathcal H$ for a.e. $t$ and we have
		\begin{equation}\label{EDIsol}
			F(\rho_T,\mu_T)+\frac12\int_0^T\Slo(\rho,\mu)+
			\frac12\int_0^T\int_\Omega \rho|v|^2+\frac12\int_0^T\int_\Omega \mu|w|^2\leq F(\rho_0,\mu_0),
		\end{equation}
		for some velocity fields $v,w$ solving the continuity equations $\partial_t\rho+\nabla\cdot(\rho v)=0, \, \partial_t\mu+\nabla\cdot (\mu w)=0$.
	\end{itemize}
	
	In order to prove the existence of an EDI solution we rely on the JKO scheme \eqref{JKOsquare} and build suitable interpolations in time of the sequence of solutions, thus obtaining an approximation of \eqref{EDIsol}. More precisely, we will have 
	$$ F(\rho_T,\mu_T)+\frac12\int_0^T\Slo(\hat\rho,\hat\mu)+
	\frac12\int_0^T\int_\Omega \tilde\rho|\tilde v|^2+\frac12\int_0^T\int_\Omega \tilde\mu|\tilde w|^2\leq F(\rho_0,\mu_0)$$
	for two different interpolations $(\hat\rho,\hat\mu)$ and $(\tilde\rho,\tilde\mu)$. We then pass to the limit as $\tau\to 0$ ($\tau$ is the time step for the discretization in the JKO scheme) where the weak limits of the different interpolations coincide. We prove that $\Slo$ is a lower semi-continuous functional for the weak convergence, which allows us to conclude, combined with more standard semi-continuity arguments.
	
	\paragraph{Organization of the paper} After this introduction, Section \ref{sec:convexification} is devoted to the computation of the convexification of $f_0$, and to some properties of the function $f$ we obtain, introducing some relevant quantities. Section \ref{sec l.s.c.} is devoted to the precise definition of the slope $\Slo$ and to the proof of its lower semi-continuity. Section \ref{sec:EDI} introduces the notion of EDI solutions and proves their existence. In the proof, several interpolations of the sequence obtained via the JKO scheme are needed, including the De Giorgi {\it variational interpolation}. Some estimates on these solutions are required in order to prove that they belong to the class $\mathcal H$ and to obtain the desired inequalities. Sections \ref{sec:differentiation_properties} and \ref{sec:H1_estimate} are not required to obtain the existence of EDI solutions, but are required if one wants to come back to System \eqref{weaksystem}. Indeed, we stated that the notions are formally equivalent thanks to an easy computation for the derivative in time of $F(\rho_t,\mu_t)$. However, this computation is only formal if we do not face smooth solutions. This explains the choice of the notion of EDI solutions: it is a definition which coincides with solving the equation in a classical sense if functions are smooth, but the equivalence is in general not granted. In Section \ref{sec:differentiation_properties}, we then explain an approximation procedure (by convolution) to obtain the differentiation property (i.e. the validity of \eqref{diffprop}) for non-smooth solutions of the continuity equation. Yet, the nonlinearity of the terms involved in $f_a$ and $f_b$ requires a certain bound on the $H^1$ norm of some functions of the regularized functions. This raises a very natural question: suppose that a function $u$ is such that its positive part $u_+$ belongs to $H^1$, and let $u_\ve$ be its convolution with a certain mollifier $\eta_\ve$: when is it true that the sequence $(u_\ve)_+$ is bounded in $H^1$ (with, possibly, uniform bounds in terms of the kernel)? This is not trivial and the answer could depend on the choice of the kernel. We provide in Section \ref{sec:H1_estimate} a proof of this fact in dimension $1$ for a specific choice of the kernel shape. Note that the very same result is false (even after changing the shape of the kernel) in higher dimensions (we thank Alexey Kroshnin for a refined counter-example in this direction). Nevertheless, this does not prevent the approximation or the differentiation to be true.

	\section{Convexification}
	\label{sec:convexification}
	
	In this section, we characterize the function $f$ which is the convex envelope of the function $f_0$, and $f_0$ is defined by $f_0(a,b)=a\log a + b\log b +ab$ on $\R_+^2$. The functional $F$ defined through
	\begin{align} \label{defn:F}
		F(\rho, \mu ) =  \begin{cases}
			\int_{\Omega} f(\rho(x),\mu(x) )\d x \quad &\mbox{ if } \rho,\mu \in L^1_+(\O),\\
			+\infty \quad &\mbox{ otherwise,}
		\end{cases} 
	\end{align}
	will be the lower semi-continuous envelope of $F_0$ (which is itself defined in a similar way replacing $f$ with $f_0$) for the weak convergence of probability measures, thanks to standard relaxation results (see, for instance, \cite{DeGiorgiLSC}).
	
	As we underlined in the introduction, $f_0$ is not convex since $D^2 f_0(a,b)$ is not positive semi-definite unless $ab \leq 1$.

	First, we look at the shape of $f_0$ on diagonal lines where $a+b$ is constant. We denote $s:=a+b$ and for a fixed $s$, we define a function $g(a) := f_0(a, s-a)$ which gives the value of $f_0$ over a segment for $s$ fixed. Then we have
	\begin{align*}
		g'(a)&=  \log (a) -  \log (s-a) + s-2a,\\
		g''(a)&=  \frac{1}{a} + \frac{1}{s-a} -2.
	\end{align*}
	We need to distinguish the cases where $s\leq 2$ and where $s>2$. In the first case, $g$ is convex, since $ \frac{1}{a} + \frac{1}{s-a}\geq \frac 4s\geq 2$. In the second case, we can easily see that $g$ has a double-well shape, with three critical points on $(0,s)$ (see Figure \ref{fig:convex}) and the critical points are characterized by (using $b=s-a$):
	\begin{align*}
		\log (a)+ b &= \log (b) +a, \\
		a+b &=s.
	\end{align*}
	For $s\leq 2$ only $a=b=s/2$ is a critical point, otherwise the same point is a local maximizer and there are two global minimizers.
	
	Due to the double-well shape of $g$ and to the fact that $g''$ vanishes only twice, the two minimizers of $g$ satisfy $g''>0$ (and not only $g''\geq 0$). Indeed, if this were not the case, then on the interval between the two minimizers, $g$ would be strictly concave and would have vanishing derivative at the endpoints, which is impossible.

	It will turn out that the convexification of $f_0$ on the line $a+b=s$ will coincide with the $1$-dimensional convexification of $g$.
	
	In order to construct such a function $f$, we first define two auxiliary functions $\alpha,\beta$ and two sets $A,B$: indeed, there exist two functions $\alpha\neq\beta \in C^0([2,+\infty))$ characterized by 
	\begin{itemize}
		\item 	$\alpha(2)=\beta(2)=1$,
		\item for every $s>2$ we have $\alpha(s)<s/2<\beta(s)$,
		\item for every $s>2$ we have
		\begin{align} \label{eq:minima_f}
			\begin{split}
				\log (\alpha(s))+ \beta(s)&= \log (\beta(s)) + \alpha(s),  \\
				\alpha(s)+ \beta (s) &= s. 
			\end{split}
		\end{align}
	\end{itemize}
	
	This means that for every $s>2$, the points $\alpha(s)$ and $\beta(s)$ are the two minimizers of $g$ and these conditions are enough to determine $\alpha,\beta$ in a unique way. They can also be obtained using an Implicit Function Theorem, which also shows the smoothness of $\alpha,\beta$. On the other hand, we lose the smoothness at $s=2$ where the IFT cannot be applied, but we have anyway continuity due to the uniqueness of the minimizer.
	
	We then define two sets $A$ and $B$ as those sets that partition $\R_+^2$, with $A$ being the closed, convex set above the curve $\{(\alpha(s),\beta(s)) : \, s \geq 2\}\cup \{(\beta(s),\alpha(s)) : \, s \geq 2\} $, and $B = \R_+^2 \setminus A$ the open set below. 
	The two sets $A$ and $B$ are represented in Figure \ref{fig:domain}.
	
	\begin{figure}[ht!]
		\centering
		\begin{minipage}{0.45\textwidth}
			\centering
			\begin{tikzpicture} [scale=0.6]
				
				\draw[->, very thick ] (-0.2,0) -- (8,0) node[right] {$a$};
				\draw[->, very thick ] (0,-0.2) -- (0,8) node[above] {$b$};
				
				\path[fill=lpink,opacity=1]
				plot[domain=0.355:7.9] (\x,{2*(\x*\x+4*\x+36)^0.5*(\x*\x +4*\x)^(-0.5)-2)}) -- (7.9,7.9) -- cycle;
				\path[fill=aciksari,opacity=1]
				(0,0) --   plot[domain=0.35:7.9] (\x,{2*(\x*\x+4*\x+36)^0.5*(\x*\x +4*\x)^(-0.5)-2)}) --  cycle;
				\path[fill=aciksari,opacity=1]
				(0,0) --  (0.36, 0 ) -- (0.36,7.85) -- (0, 7.85)--  cycle;
				\path[fill=aciksari,opacity=1]
				(0,0) --  (0, 0.36) -- (7.9, 0.36) -- (7.9,0)--  cycle;
				\draw[color=blue, very thick, dashed]    plot[domain=0.51:7.9] (\x,4/\x) node[right] {$ab =1$};
				\draw[color=black, very thick, dashed]    plot[domain=0:-4](\x+4, -\x) node[anchor= east] {$a+b =2$};
				\draw[color=black, very thick, dashed]    plot[domain=0:-6](\x+6, -\x) node[anchor=east] {$a+b =s$};
				\draw[color=red, very thick]   plot[domain=0.36:7.9] (\x,{2*(\x*\x+4*\x+36)^0.5*(\x*\x +4*\x)^(-0.5)-2)}) ;
				\draw[gray, thick,->] (0,0) -- (8,8); 
				\filldraw[fill=purple2, draw=purple2] (3,3) circle (0.13cm);
				\filldraw[fill=purple2, draw=purple2] (0.64,5.38) circle (0.13cm);
				\filldraw[fill=purple2, draw=purple2] (5.38,0.64) circle (0.13cm);
			\end{tikzpicture} 
			\caption{\small Partition of $\R_+^2$}
			\label{fig:domain}
		\end{minipage}\hfill
		\begin{minipage}{0.45\textwidth}
			\centering
			\begin{tikzpicture}[scale=0.8]
				\draw[thick,->] (-1.9,0) -- (3.5,0) node[right] {$a$}; 
				\draw[thick,->] (-1.8,-0.1) -- (-1.8,6.4) ; 
				\draw[thick][scale=0.8, domain=-2.2:2.2, smooth, variable=\x, orange] plot ({\x}, {(1.5-\x)^2*(1.5+\x)^2+1}) node[anchor=north west, ao] {};
				\draw[very  thick,dashed, ao] (-1.2,0.8) -- (1.2,0.8) node[below] {$\tilde f$}; 
				\draw[very  thick, dashed][scale=0.8, domain=1.7:2.2, smooth, variable=\x, ao] plot ({\x}, {(1.5-\x)^2*(1.5+\x)^2+1}) node[anchor=south west, orange] {$g$};
				\draw[very  thick, dashed][scale=0.8, domain=-1.6:-2.2, smooth, variable=\x, ao] plot ({\x}, {(1.5-\x)^2*(1.5+\x)^2+1});
				\filldraw[fill=purple2, draw=purple2] (0,4.8) circle (0.1cm);
				\filldraw[fill=purple2, draw=purple2] (1.2,0.9) circle (0.1cm);
				\filldraw[fill=purple2, draw=purple2] (-1.2,0.9) circle (0.1cm);
				\draw[thin,dashed, black] (0,4.8) -- (0,0) node[below] {$s/2$}; 
			\end{tikzpicture}
			\caption{ \small Functions $g$ and $\tilde{f}$}
			\label{fig:convex}
		\end{minipage}
		\caption{ \small In Figure \ref{fig:domain}, $\R_+^2$ is divided into two subsets $A$ and $B$ which are shown in pink and yellow colors respectively. The closed, convex set $A$ corresponds to the set above the curve  $\{(\alpha(s),\beta(s)) : \, s \geq 2\}\cup \{(\beta(s),\alpha(s)) : \, s \geq 2\}$ and $B = \R_+^2 \setminus A$. In Figure \ref{fig:convex}, $g$, which is the $1$-dimensional convexification of $f_0$ and its convex envelope $\tilde f$ are drawn in orange and green colors respectively. }
		\label{fig:domain-convex}
	\end{figure} 
	Then, the main goal of this section is to prove the following proposition:
	
	\begin{prop}\label{prop relax}
		
		Define
		\begin{align} \label{defn:convexification}
			f(a,b) =
			\begin{cases}
				\tilde{f} (a+b) \quad   &(a,b) \in A, \\
				f_0(a,b), \quad  &(a,b) \in B,
			\end{cases}  
		\end{align} where $f_0(a,b) := a \log a + b \log b + a b$ and $\tilde f (s) = f_0(\alpha (s), \beta (s)).$ Then, $\tilde f\in C^1 ([2,+\infty))$ and the function $f$ is the convex envelope of $f_0$.
	\end{prop}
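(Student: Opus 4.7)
My plan has three ingredients: establishing the regularity and convexity of the one-variable function $\tilde f$; proving the $C^1$-matching of $f$ across the boundary $\partial A=\partial B$; and finally identifying $f$ with the convex envelope $f_0^{**}$ by combining convexity with an explicit convex decomposition on $A$.

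First, I would establish that $\tilde f\in C^1([2,+\infty))$ and is convex. For $s>2$, the Implicit Function Theorem applied to \eqref{eq:minima_f} (its hypothesis being $g''(\alpha(s))>0$ at the minimizer, as already remarked in the excerpt) yields $\alpha,\beta\in C^\infty((2,+\infty))$. Differentiating $\tilde f(s)=f_0(\alpha(s),\beta(s))$ and using $g'(\alpha(s))=0$ gives
\[
\tilde f'(s)=\log\beta(s)+1+\alpha(s),
\]
which by \eqref{eq:minima_f} also equals $\log\alpha(s)+1+\beta(s)$. Since $\alpha(s),\beta(s)\to 1$ as $s\to 2^+$, this derivative extends continuously to the value $2$ at $s=2$, from which one deduces $\tilde f\in C^1([2,+\infty))$. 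Differentiating once more, and using both $\alpha'+\beta'=1$ and the differentiated critical-point equation, a direct computation yields
\[
\tilde f''(s)=\frac{1-\alpha(s)\beta(s)}{s-2\alpha(s)\beta(s)}.
\]
The denominator is positive for $s>2$, this being exactly the condition $g''(\alpha)>0$. For the numerator, set $t=\beta/\alpha\ge 1$; the critical-point equation $\log(\beta/\alpha)=\beta-\alpha$ gives $\alpha=\frac{\log t}{t-1}$, whence $\alpha\beta=\frac{t(\log t)^2}{(t-1)^2}$, so $\alpha\beta\le 1$ reduces to the classical inequality $\sqrt t\log t\le t-1$ for $t\ge 1$ (easily proved by showing that the function $\sqrt t\log t-(t-1)$ has non-positive derivative for $t\ge 1$ and vanishes at $t=1$). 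Hence $\tilde f''\ge 0$ and $\tilde f$ is convex.

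Next, I would verify that $\nabla f$ is continuous across $\partial A$. At a boundary point $(\alpha(s),\beta(s))$ the $A$-side gradient is $(\tilde f'(s),\tilde f'(s))$, while $\nabla f_0(\alpha,\beta)=(\log\alpha+1+\beta,\log\beta+1+\alpha)$; the critical-point equation forces both components to equal $\tilde f'(s)$. Global convexity of $f$ then follows in the standard way: on $A$, $f(a,b)=\tilde f(a+b)$ is the composition of the convex $\tilde f$ with a linear form; on $B$, $f=f_0$ has positive-definite Hessian since $B\subset\{ab<1\}$, as announced in the introduction. For any segment $[x,y]\subset\R_+^2$, the intersection with the convex set $A$ is an interval, and its complement consists of at most two sub-segments contained in $B$; on each piece, $t\mapsto f((1-t)x+ty)$ has non-negative second derivative, and the $C^1$-matching ensures that its (continuous) first derivative is non-decreasing on all of $[0,1]$. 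Thus $f$ is convex along every segment, hence globally convex.

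Finally, to conclude $f=f_0^{**}$: since $f$ is convex and $f\le f_0$ (with equality on $B$, and $f=\tilde f(s)=\min_{a'+b'=s}f_0\le f_0$ on $A$), one has $f\le f_0^{**}$. For the reverse inequality, on $B$ it is immediate ($f=f_0\ge f_0^{**}$); on $A$, any point $(a,b)$ with $s=a+b$ can be written as $\lambda(\alpha(s),\beta(s))+(1-\lambda)(\beta(s),\alpha(s))$ for some $\lambda\in[0,1]$, and the symmetry of $f_0$ gives $f_0(\alpha,\beta)=f_0(\beta,\alpha)=\tilde f(s)$, so $f_0^{**}(a,b)\le\tilde f(s)=f(a,b)$. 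I expect the main technical hurdle to be the explicit derivation of the closed form for $\tilde f''$ together with the sharp bound $\alpha\beta\le 1$; once these are in hand, the remainder is essentially bookkeeping around the $C^1$-matching and the standard characterization of the convex envelope.
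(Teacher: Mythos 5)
Your proposal is correct, and its overall architecture is the same as the paper's: compute $\tilde f'(s)=\log\alpha(s)+1+\beta(s)$ using the criticality of $(\alpha(s),\beta(s))$ and $\alpha'+\beta'=1$, deduce $C^1$ regularity up to $s=2$ from continuity of $\alpha,\beta$, obtain $\tilde f''(s)=\frac{1-\alpha\beta}{s-2\alpha\beta}$ with positive denominator coming from $g''>0$ at the minimizers, glue the two regimes into a globally convex $f$ via the $C^1$-matching of gradients across $\partial A$ and a segment decomposition into at most three pieces, and finally sandwich $f$ between $f_0$ and its convex envelope exactly as in the paper (your convex-combination argument on each line $a+b=s$ is the paper's ``$f$ coincides with the one-dimensional convexification on each segment''). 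The one genuinely different ingredient is your proof that $\alpha(s)\beta(s)\le 1$: the paper (Lemma \ref{lem:Properties_pi}) derives this from the monotonicity of $\pi(s)=\alpha(s)\beta(s)$, i.e.\ from the formula $\pi'(s)=-\pi(s)\frac{s-2}{s-2\pi(s)}\le 0$ together with $\pi(2)=1$, whereas you substitute $t=\beta/\alpha$ in the critical-point relation $\log(\beta/\alpha)=\beta-\alpha$ to get $\alpha\beta=\frac{t(\log t)^2}{(t-1)^2}$ and reduce the claim to $\sqrt{t}\log t\le t-1$ (equivalently, the geometric mean is below the logarithmic mean), which is indeed elementary. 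Your route is more self-contained for this proposition, since it avoids computing $\alpha',\beta',\pi'$ for the sole purpose of a sign; the paper's route, on the other hand, produces the explicit formula for $\pi'$ that it needs anyway later (e.g.\ for Remark \ref{rem:r_0}, the identity $\nabla(S+\pi(S))=S\tilde f''(S)\nabla S$ in Lemma \ref{lem: ineq}, and the differentiability of $\pi$ at $s=2$), so nothing is wasted there. Two minor points you should make explicit in a final write-up: the fact that $B\subset\{ab<1\}$ (so that $D^2f_0>0$ on $B$) follows from $\pi(s)\le 1$ plus the observation that on each line $a+b=s$ the product $ab$ is maximal on the segment cut out by $A$; and the convexity along segments touching the axes $\{a=0\}\cup\{b=0\}$, where $f$ is not $C^1$, is recovered by continuity from the open quadrant — the paper is equally terse on both points.
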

	We give the proof of this proposition at the end of this section. Next, we give some technical results that will prove useful in the sequel. We define the ``product'' function $P$ as the following:
	\begin{align} \label{defn:Product_P}
		P(a,b) =
		\begin{cases}
			\pi (a+b) \quad &\mbox{ if }(a,b) \in A,\\
			ab \quad &\mbox{ if }(a,b) \in B,
		\end{cases}
	\end{align}
	where $\pi (s): = \alpha(s) \beta(s)\in C^0([2, +\infty))$.
	We gather some properties of $\pi(s)$ in the following lemma:
	\begin{lemma} \label{lem:Properties_pi} 
		The function $\pi \in C^0([2, +\infty))$, satisfies the following properties:
		\begin{enumerate}[label=\roman*.]
			\item \label{item1} $s-2\pi(s) >0$ for $s>2$. 
			\item \label{item2} $\pi\in C^1((2,+\infty))$ and $\pi'(s) =-\pi(s)\frac{(s-2)}{s-2\pi(s)}$.
			\item \label{item3} $\pi(s) <1$ for $s>2$.
			\item \label{item4} $\pi$ is also differentiable at $s=2$ and $\pi'(2) = -1/2$.
		\end{enumerate}
	\end{lemma}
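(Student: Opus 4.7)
The plan is to establish (i) and (ii) first, deduce (iii) as an immediate consequence, and then handle (iv) by a separate Taylor expansion argument that resolves the $0/0$ indeterminacy in the formula from (ii).

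For (i), I would invoke the strict second-order condition at the two minimizers of $g(a) := f_0(a, s-a)$, which is already noted just before the statement: each of the points $\alpha(s)$, $\beta(s)$ is a strict local minimizer of $g$ (since $g''$ vanishes at only two isolated points, it cannot vanish at a minimizer without forcing a strictly concave piece connecting them), so $g''(\alpha(s)) = 1/\alpha(s) + 1/(s-\alpha(s)) - 2 > 0$, equivalently $1/\alpha(s) + 1/\beta(s) > 2$. Multiplying by $\alpha(s)\beta(s) > 0$ gives precisely $s > 2\pi(s)$.

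For (ii), I would apply the implicit function theorem to the smooth system
\begin{equation*}
F_1(\alpha,\beta,s) := \alpha + \beta - s = 0, \qquad F_2(\alpha,\beta,s) := \log\alpha - \log\beta + \beta - \alpha = 0,
\end{equation*}
whose Jacobian with respect to $(\alpha,\beta)$ has determinant $2 - 1/\alpha - 1/\beta$, which is nonzero by (i). This yields $\alpha,\beta \in C^\infty((2,+\infty))$, hence $\pi \in C^1$. Differentiating the two relations in $s$ produces the linear system $\alpha' + \beta' = 1$ and $\alpha'(1-\alpha)/\alpha = \beta'(1-\beta)/\beta$. Letting $k$ denote the common value of that last ratio, one finds $k = (1-\alpha)(1-\beta)/(s - 2\pi)$, and a short computation of $\pi' = \alpha'\beta + \alpha\beta' = k\pi[(1-\alpha) + (1-\beta)]/[(1-\alpha)(1-\beta)]$ collapses to $\pi'(s) = -\pi(s)(s-2)/(s - 2\pi(s))$. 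Item (iii) is then immediate: for $s > 2$ all three factors in this formula are positive (the last by (i)), so $\pi'(s) < 0$; combined with $\pi(2) = \alpha(2)\beta(2) = 1$ and the continuity of $\pi$ at $2$, we conclude $\pi(s) < 1$.

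The main obstacle is (iv): the expression for $\pi'(s)$ becomes $0/0$ at $s = 2$. My preferred route is a symmetric change of variables. Setting $t := \beta - \alpha \geq 0$, so that $\alpha = (s-t)/2$ and $\beta = (s+t)/2$, the constraint $\log\alpha - \log\beta = \alpha - \beta$ reduces to the clean implicit equation $t = s\tanh(t/2)$. As $s \to 2^+$ one has $t \to 0$, and substituting the expansion $\tanh(t/2) = t/2 - t^3/24 + O(t^5)$ and inverting gives $t(s)^2 = 12(s-2)/s + O((s-2)^2)$. Since $\pi(s) = \alpha(s)\beta(s) = (s^2 - t(s)^2)/4$, we obtain
\begin{equation*}
\frac{\pi(s) - \pi(2)}{s - 2} = \frac{s+2}{4} - \frac{t(s)^2}{4(s-2)} \longrightarrow \frac{4}{4} - \frac{12}{8} = -\frac{1}{2} \quad \text{as } s \to 2^+,
\end{equation*}
establishing differentiability at $2$ with $\pi'(2) = -1/2$. (As a sanity check, L'H\^opital applied to the formula in (ii) gives a quadratic $2L^2 - L - 1 = 0$ for $L := \lim_{s\to 2^+}\pi'(s)$, whose only root compatible with $\pi' < 0$ is indeed $-1/2$.)
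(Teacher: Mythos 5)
Your proof is correct, and while items (i)--(iii) follow essentially the paper's own route (the strict second-order condition $g''(\alpha(s))>0$ for (i), differentiation of the optimality system -- which the paper does by hand and you organize through the implicit function theorem -- for (ii), and monotonicity plus $\pi(2)=1$ for (iii)), your treatment of (iv) is genuinely different. The paper resolves the $0/0$ indeterminacy by a bootstrapping argument: it defines sequences $p_n,q_n$ squeezing $\liminf$ and $\limsup$ of the incremental ratio $\frac{\pi(s)-1}{s-2}$ through the recursion $q_{n+1}=-1/(1-2p_n)$, $p_{n+1}=-1/(1-2q_n)$, and shows both converge to $-1/2$. Your substitution $t=\beta-\alpha$, which turns the minimality condition into the clean relation $t=s\tanh(t/2)$ and yields the expansion $t(s)^2=\frac{12(s-2)}{s}+O((s-2)^2)$, computes the incremental ratio of $\pi=(s^2-t^2)/4$ directly. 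This buys more than the paper's argument does (an explicit first-order expansion of $\pi$, and in fact of $\alpha,\beta$, near $s=2$, and it shows $\pi'$ itself, not only the difference quotient, tends to $-1/2$), at the price of a Taylor-expansion-and-inversion step; the paper's iteration uses nothing beyond the formula from (ii), continuity, and monotone sequences. You correctly flag your L'H\^opital remark as only a consistency check, since it presupposes existence of the limit. One small point to tidy in (ii): solving $\alpha'(1-\alpha)/\alpha=\beta'(1-\beta)/\beta=k$ for $\alpha',\beta'$ divides by $1-\alpha$ and $1-\beta$, so you should note that $\alpha(s)<1<\beta(s)$ for $s>2$ (immediate since $x\mapsto\log x-x$ is strictly increasing on $(0,1)$ and strictly decreasing on $(1,\infty)$, so two distinct points with equal value lie on opposite sides of $1$), or else solve the $2\times2$ linear system by Cramer's rule using the nonvanishing determinant $2-1/\alpha-1/\beta$, which avoids the division altogether; either fix is a one-line addition, not a genuine gap.
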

	\begin{proof}
		Recall that $\alpha(s)$ and $\beta(s)$ are the minimizers of $g$, and that they satisfy $g''>0$. Since we have 
		$g'' (a) = \frac{1}{a} + \frac{1}{s-a} -2=\frac{s}{a(s-a)}-2$,
		taking $a=\alpha(s)$ and $s-a=\beta(s)$ we obtain $\frac{s}{\pi(s)}>2$, which proves \ref{item1}
		
		\medskip
		
		Now we would like to compute $\pi'(s)$ for $s>2$ (note that $\alpha$ and $\beta$ are differentiable because of the implicit function theorem): we have
		\begin{align} \label{eq:pi_prime}
			\pi'(s) = \alpha'(s) (s-\alpha(s)) + \alpha(s) (1-\alpha'(s)) = \alpha'(s) (s-2\alpha(s)) + \alpha(s).
		\end{align}
		Let us now compute $\alpha'(s)$ and $\beta'(s)$. We know from \eqref{eq:minima_f} that at the minima of $f$ we have
		\begin{align*}
			\frac{\alpha'(s)}{\alpha(s)} + (1-\alpha'(s)) = \frac{1-\alpha'(s)}{s-\alpha(s)} + \alpha'(s),
		\end{align*} which gives 
		\begin{align*}
			\alpha'(s) = \frac{\alpha(s) (1-s +\alpha(s))}{s-2\pi(s)} \quad \mbox{and} \quad \beta'(s) = \frac{\beta(s) (1-s +\beta(s))}{s-2\pi(s)}.
		\end{align*} Plugging these in \eqref{eq:pi_prime} we obtain
		\begin{align*} 
			\pi'(s) &= \frac{\alpha(s) (1-s +\alpha(s))}{s-2\pi(s)} (s-2\alpha(s)) + \alpha(s)
			= \frac{\alpha(s) (1-\beta(s))}{s-2\pi(s)} (\beta(s)-\alpha(s)) + \alpha(s) 
			\\&= \alpha(s) \left(1 +  \frac{(1-\beta(s)) (\beta(s)-\alpha(s)) }{s-2\pi(s)}\right) = - \pi(s) \frac{(s-2)}{s-2\pi(s)},
		\end{align*} which proves \ref{item2}
		
		\medskip
		
		Since $\pi'(s) \leq 0$ for $s>2$ (we use here $s-2\pi(s)>0$), and $\pi \in C^0([2, +\infty))$, we obtain that $\pi$ has its maximum value at $s=2$, then $\pi(s) < \pi(2) = 1$ for $s>2$. This gives \ref{item3}
		
		\medskip
		
		Now, we want to prove that $\pi$ is differentiable at $s=2$ and compute its derivative. We will consider the liminf and the limsup of the incremental ratio and bound it iteratively. We recall that we have $0 \leq \pi(s) \leq 1$, $-1 <  \pi '(s) \leq 0 $, $\pi \in C^0 ([2, + \infty))$, 
		and 
		\begin{align*}
			\pi'(s) = - \pi(s) \frac{1}{1- 2\frac{\pi(s)-1}{s-2}}. 
		\end{align*}
		We first note that we have  
		$$
		-1 < \frac{\pi(s)-1}{s- 2}  \leq 0.
		$$
		We then deduce
		\begin{align*}
			-1 \leq \liminf_{s\to 2^+}\frac{\pi(s)-1}{s- 2} \leq \limsup_{s\to 2^{+}} \frac{\pi(s)-1}{s- 2}  \leq 0.
		\end{align*}
		We define two sequences $(p_n)_{n \in \N}$ and $(q_n)_{n \in \N}$ which are meant to satisfy 
		\begin{align}\label{pnqn}
			p_n \leq \liminf_{s\to 2^+} \frac{\pi(s)-1}{s- 2} \leq \limsup_{s\to 2^{+}} \frac{\pi(s)-1}{s- 2}  \leq q_n.
		\end{align} We take $p_0 = -1$ and $q_0 = 0$. 
		Supposing that we have defined $p_n$ and $q_n$, we then note that for any $\varepsilon >0$, we have, for $s$ in a neighborhood of $2^+$, that
		\begin{align*}
			p_n  -\varepsilon  < \frac{\pi(s)-1}{s- 2}  < q_n +\varepsilon.
		\end{align*} This implies that we have, in the same neighborhood
		$$
		\pi'(s) < \frac{-\pi(s)}{1-2(p_n -\varepsilon)}.
		$$
		Since $\pi(s)\to 1$ as $s\to 2$, we can define $q_{n+1}$ via 
		$$q_{n+1} := \frac{-1}{1-2p_n }, $$ 
		and, analogously, 
		$$ 
		p_{n+1} := \frac{-1}{1-2q_n }.
		$$
		In particular, we have $p_1=-1$ and $q_1=-1/3$. We can see that the new values $p_{n+1}$ and $q_{n+1}$ also satisfy \eqref{pnqn}.
		From the definition of $p_n$ and $q_n$ we obtain
		\begin{align*}
			p_{n+2} = \frac{-1}{1-2 q_{n+1} } = - \frac{1-2p_n}{3-2 p_n}, \quad \text{and} \quad q_{n+2} = - \frac{1-2 q_n}{3-2 q_n}.
		\end{align*}
		By induction, we can see that the sequence $p_{2n}$ is increasing and bounded above by $-1/2$. If we denote its limit by $L$, we have $L=-\frac{1-2L}{2-2L}$, which implies $L = -1/2$. 
		The same holds for $p_{2n+1}=p_{2n}$.
		
		Similarly, $q_{2n}$ and $q_{2n+1}$ are decreasing and bounded from below by $-1/2$ and they converge to $-1/2$ as well.
		
		This gives $ \liminf_{s\to 2^+} \frac{\pi(s)-2}{s-2} =  -1/2$ and \ref{item4} is proven.
	\end{proof} 
	
	\begin{lemma} \label{lem:tilde_f_convex}
		The function	$\tilde{f}: [2,+\infty) \to \R$ (defined in Proposition \ref{prop relax}) is convex and $C^1$.
	\end{lemma}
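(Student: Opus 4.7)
The plan is to establish $C^1$ regularity on the open interval $(2,+\infty)$ by direct differentiation (using the optimality condition \eqref{eq:minima_f} to produce a crucial cancellation), then extend continuously to the endpoint $s=2$, and finally deduce convexity from an explicit sign-positive formula for $\tilde f''$ obtained via Lemma \ref{lem:Properties_pi}(ii).

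For the first step, since $\alpha,\beta \in C^1((2,+\infty))$ by the implicit function theorem, I differentiate $\tilde f(s)=\alpha\log\alpha + \beta\log\beta + \alpha\beta$ directly and group terms as
\[
\tilde f'(s) = \alpha'(s)\bigl(\log\alpha + 1 + \beta\bigr) + \beta'(s)\bigl(\log\beta + 1 + \alpha\bigr).
\]
The optimality condition $\log\alpha+\beta = \log\beta+\alpha$ from \eqref{eq:minima_f} forces the two parenthesized factors to coincide; combined with $\alpha'+\beta'=1$ (coming from $\alpha+\beta=s$) and the identity $\alpha\beta=\pi(s)$, averaging the two equal expressions yields the clean formula
\[
\tilde f'(s) \;=\; 1 + \tfrac{s}{2} + \tfrac{1}{2}\log\pi(s), \qquad s>2.
\]

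This is the key algebraic step. Its right-hand side is a continuous function on $[2,+\infty)$ thanks to $\pi \in C^0([2,+\infty))$ with $\pi(2)=1$, giving the limit $\tilde f'(2^+)=2$. Since also $\tilde f(s) \to f_0(1,1)=1=\tilde f(2)$ as $s\to 2^+$ by continuity of $\alpha,\beta$ at $2$, the fundamental theorem of calculus upgrades this into $\tilde f \in C^1([2,+\infty))$ with right derivative at $2$ equal to $2$.

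For convexity, I differentiate once more on $(2,+\infty)$ and substitute Lemma \ref{lem:Properties_pi}(ii):
\[
\tilde f''(s) \;=\; \tfrac{1}{2} + \frac{\pi'(s)}{2\pi(s)} \;=\; \tfrac{1}{2}\!\left(1 - \frac{s-2}{s-2\pi(s)}\right) \;=\; \frac{1-\pi(s)}{s-2\pi(s)}.
\]
Parts (i) and (iii) of Lemma \ref{lem:Properties_pi} give $s-2\pi(s)>0$ and $1-\pi(s)>0$ for $s>2$, so $\tilde f''>0$ on $(2,+\infty)$. Hence $\tilde f'$ is increasing on $(2,+\infty)$, and by continuity of $\tilde f'$ at $s=2$ established in the previous step, $\tilde f$ is convex on all of $[2,+\infty)$. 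The only real technical content is the cancellation producing the compact formula for $\tilde f'$; everything else is an immediate consequence of Lemma \ref{lem:Properties_pi}, so no serious obstacle is expected.
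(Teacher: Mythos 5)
Your proof is correct and follows essentially the same route as the paper: exploit the optimality condition \eqref{eq:minima_f} to collapse $\tilde f'$ to a single continuous expression, deduce $C^1$ regularity up to $s=2$ from continuity of $\alpha,\beta$ (equivalently $\pi$), and obtain $\tilde f''(s)=\frac{1-\pi(s)}{s-2\pi(s)}\geq 0$ from Lemma \ref{lem:Properties_pi}. The only difference is cosmetic: you write $\tilde f'(s)=1+\tfrac{s}{2}+\tfrac12\log\pi(s)$ and differentiate via $\pi'$, whereas the paper keeps $\tilde f'=\log\alpha+1+\beta$ and uses the formulas for $\alpha',\beta'$; both land on the same expression for $\tilde f''$.
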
 
	\begin{proof} We recall that we have
		\begin{align*}
			\tilde{f}(s) := f_0 (\alpha(s), \beta(s)) = \alpha(s) \log (\alpha(s))+  \beta(s) \log (\beta(s)) + \alpha(s) \beta(s).
		\end{align*} Then we compute
		\begin{align*}
			\tilde{f}' (s)  
			= \left(  \log (\alpha (s))  + 1  + \beta (s)\right)  (\alpha ' (s) + \beta' (s))  =    \log (\alpha (s))  + 1  + \beta (s).
		\end{align*} 
		This allows to see $\tilde f\in C^1$ since the expression for $\tilde f'$ is made of continuous functions (as we do have $\alpha,\beta\in C^0([2,\infty))$). Moreover, we can go on differentiating and get
		\begin{align*}
			\tilde{f}''(s)   &= \frac{\alpha'(s)}{\alpha(s)} + \beta'(s) = \alpha'(s) + \frac{\beta'(s)}{\beta(s)} = 
			\frac{\alpha(s) (1-s +\alpha(s))}{s-2\pi(s)}  + \frac{ (1-s +\beta(s))}{s-2\pi(s)} 
			\\&= \frac{\alpha(s)-s \alpha(s)+\alpha^2(s) + 1-\alpha(s)}{s-2\pi(s)}  = \frac{1+ \alpha(s) (\alpha(s)-s)}{s-2\pi(s)} = \frac{1-\pi(s)}{s-2\pi(s)}.
		\end{align*} Since we have $\pi(s)\leq 1$ and $s-2\pi(s)\geq 0$, the second derivative of $\tilde f$ is non-negative, and $\tilde f $ is convex.
	\end{proof}

	\begin{remark} \label{rem:r_0}
		For future use, we denote by $r_0$ the number given by
		$$r_0:=\inf_{s>2}s\tilde f''(s)= \inf_{s>2} s\frac{1-\pi(s)}{s-2\pi(s)},$$
		and we note that we have $r_0>0$ since the function in the infimum is strictly positive, tends to $1$ as $s\to\infty$, and tends to $2/(2-1/\pi'(2))=1/2>0$ as $s\to 2^+$.
	\end{remark}
	\begin{corollary} \label{lem:f_convex}
		The function $f: \R_+^2 \to \R$ is convex.
	\end{corollary}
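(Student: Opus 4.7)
The plan is to deduce convexity of $f$ on $\R_+^2$ by pasting together the convexity of $\tilde f$ (Lemma~\ref{lem:tilde_f_convex}) on the convex set $A$ with the convexity of $f_0$ on $B$, using $C^1$ regularity across the interface $\Gamma := \partial A \cap \partial B = \{(\alpha(s),\beta(s))\}\cup\{(\beta(s),\alpha(s))\}$ for $s\geq 2$.

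The main technical ingredient is to check that $f\in C^1(\R_+^2)$. The two pieces defining $f$ agree in value on $\Gamma$ by construction, since $\tilde f(s)=f_0(\alpha(s),\beta(s))$. For the gradients, the characterization \eqref{eq:minima_f} gives $\log\alpha(s)+\beta(s)=\log\beta(s)+\alpha(s)$, so the two components of $\nabla f_0=(\log a+1+b,\log b+1+a)$ coincide at each point of $\Gamma$, with common value $\log\alpha(s)+1+\beta(s)$. This expression is precisely $\tilde f'(s)$, as computed in the proof of Lemma~\ref{lem:tilde_f_convex}, so the gradient from the $A$-side, $\nabla[\tilde f(a+b)]=(\tilde f'(s),\tilde f'(s))$, matches, and $\nabla f$ extends continuously across $\Gamma$.

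Next I would establish local convexity of $f$ at each interior point. On $A^\circ$, $f(a,b)=\tilde f(a+b)$ is convex as a composition of the convex function $\tilde f$ (Lemma~\ref{lem:tilde_f_convex}) with a linear form. On $B^\circ$, $f=f_0$; to see $D^2 f_0\geq 0$ there, I would verify $B\subset\{ab<1\}$. This follows from the bound $\pi(s)\leq 1$ in Lemma~\ref{lem:Properties_pi}: for $s\geq 2$, $\alpha(s)$ and $\beta(s)$ are the two roots of $a^2-sa+\pi(s)=0$ (since $\alpha\beta=\pi(s)$ and $\alpha+\beta=s$); comparing with $a^2-sa+1=0$, whose roots parametrize $\{ab=1\}\cap\{a+b=s\}$, and using $\pi(s)\leq 1$, the roots of the latter lie inside $[\alpha(s),\beta(s)]$, so $\{ab\geq 1\}\cap\{a+b=s\}\subset A$. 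Combined with the AM-GM observation $\{ab\geq 1\}\subset\{a+b\geq 2\}$, this yields $\{ab\geq 1\}\subset A$, so $B\subset\{ab<1\}$.

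Finally, to pass from local to global convexity, I would restrict $f$ to an arbitrary segment $[x,y]\subset\R_+^2$ and let $h(t):=f((1-t)x+ty)$. Then $h\in C^1([0,1])$, and on each open sub-interval on which the segment lies in $A^\circ$ or $B^\circ$, $h''\geq 0$, so $h'$ is non-decreasing there. Continuity of $h'$ across the crossings of $\Gamma$, which form a set with empty interior since $\Gamma$ is a smooth non-linear curve, then upgrades this to monotonicity of $h'$ on all of $[0,1]$, hence $h$ is convex. This shows $f$ is convex along every segment of $\R_+^2$, and thus convex. The main obstacle is the $C^1$-matching on $\Gamma$, which hinges on the first-order optimality condition \eqref{eq:minima_f} together with the formula for $\tilde f'$ obtained in the proof of Lemma~\ref{lem:tilde_f_convex}; everything else then reduces to standard one-variable arguments.
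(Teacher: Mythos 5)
Your overall route is the same as the paper's: show $f\in C^1$ by matching the gradients of the two pieces across the interface (which, as you note, is exactly the optimality condition \eqref{eq:minima_f} together with the formula $\tilde f'(s)=\log\alpha(s)+1+\beta(s)$ from Lemma~\ref{lem:tilde_f_convex}), use convexity of $f_0$ on $B$ and of $\tilde f$ on $A$, and conclude by restricting to segments. Your intermediate verifications are correct and in fact more detailed than the paper's: the gradient matching on $\Gamma$ is exactly what the paper asserts with ``these two expressions agree on $\overline B\cap A$'', and your quadratic-roots argument that $\pi(s)\leq 1$ forces $\{ab\geq 1\}\subset A$, hence $B\subset\{ab<1\}$ and $D^2f_0\geq 0$ on $B$, is a clean proof of a fact the paper only states in passing.

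The one step that does not hold as written is the final local-to-global upgrade. You argue that $h'$ is continuous and non-decreasing on each open subinterval where the segment avoids $\Gamma$, and that since the crossing set has empty interior, continuity upgrades this to monotonicity of $h'$ on $[0,1]$. That principle is false in general: a continuous function can be non-decreasing on every component of the complement of a closed set with empty interior and still fail to be monotone (take minus the Cantor function, which is constant on each complementary interval of the Cantor set). Moreover, ``$\Gamma$ is a smooth non-linear curve'' does not by itself rule out that the segment meets $\Gamma$ in a set with nonempty interior. The missing ingredient — and the one the paper uses — is the convexity of $A$: a segment meets the closed convex set $A$ in a closed subinterval, so either it crosses $\Gamma$ in at most two points, giving a decomposition of $[0,1]$ into at most three consecutive intervals on each of which $h$ is convex (with one-sided derivatives matching by the $C^1$ property, so $h'$ is globally non-decreasing), or a whole subsegment lies in $\partial A\subset A$, where $f=\tilde f(a+b)$ and $h$ is still convex, and the same three-piece argument applies. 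With this replacement (which is precisely the paper's decomposition into at most three subsegments and summation of the monotonicity inequalities $(\nabla f(x_i)-\nabla f(x_{i+1}))\cdot(x-y)\geq 0$), your proof is complete.
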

	\begin{proof} We notice that $f_0 $ is $C^1$ in the interior of $B$ and that $\tilde f$ is  $C^1$ on $[2,+\infty)$. Moreover we have the following formula for the gradient of $f$, using $s=a+b$:
		\begin{align*}
			\nabla f(a,b) = 
			\begin{cases}
				(\tilde f'(s), \tilde f'(s))=( \log (\alpha (s))  + 1  + \beta(s),\log (\beta (s))  + 1  + \alpha(s))  \quad &\mbox{ if }(a,b) \in A, \\
				\nabla f_0(a, b) = (
				\log a+ b + 1, \log b + a + 1 ), 
				\quad &\mbox{ if }(a,b) \in B.
			\end{cases}
		\end{align*}
		Since these two expressions agree on $\overline{B}\cap A$, then $f$ is globally $C^1$ in $(0,+\infty)^2$. This allows us to prove that $f$ is convex by considering separately its restrictions to $A$ and $B$.
		
		Indeed, convexity for $C^1$ functions is equivalent to the inequality $(\nabla f(x)-\nabla f(y))\cdot (x-y)\geq 0$ for every $x,y$. If the segment connecting $x$ and $y$ is completely contained either in $A$ or in $\overline{B}$ then the convexity of the two restrictions is enough to obtain the desired inequality. If not, we can decompose it into a finite number of segments (three at most) of the form $[x_i,x_{i+1}]$ with $x_0=x$ and $x_3=y$ and each $[x_i,x_{i+1}]$ fully contained either in $A$ or in $\overline{B}$. We then write
		$$(\nabla f(x)-\nabla f(y))\cdot (x-y)=\sum_{i=1}^3(\nabla f(x_i)-\nabla f(x_{i+1}))\cdot (x-y),$$ and the fact that $x-y$ is a positive scalar multiple of each vector $x_i-x_{i+1}$ shows that the convexity of each restriction is again enough for the desired result (note that we strongly use here $f\in C^1$, i.e. that the gradients of the two restrictions agree).

		The convexity of $f$ restricted to $\overline{B}$ comes from the positivity of the Hessian of $f_0$ and that of $f$ restricted to $A$ from the convexity of $\tilde f$, and the result is proven.
	\end{proof}
	
	Now, with the help of the above results, we prove Proposition \ref{prop relax}.
	\begin{proof}[Proof of Proposition \ref{prop relax}]
		The function $f$ has been built so that on each segment $\{(a,b):a+b=s\}$ it coincides with the convexification of the restriction of $f_0$ on such a segment. So, the convexification of $f_0$ cannot be larger than $f$. On the other hand, $f$ is a convex function smaller than $f_0$, so it is also smaller than the convexification, which proves the claim.  
	\end{proof}
	
	We conclude this section with a remark which will be useful in the sequel (see Lemma \ref{lem:de_Giorgi_H^1} in Section~\ref{sec:EDI}).
	
	\begin{remark} \label{rem:local_Lipschitz}
		If $\chi$ is a function which is compactly supported in the set $B$, then there exists a Lipschitz continuous function $g:\R^2\to\R$ such that we have
		$$
		\chi(a,b) = g(f_a(a,b),f_b(a,b)).
		$$ 
		This holds because the Jacobian of $(f_a,f_b)$ is invertible inside $B$. 
	\end{remark}
	
	\section{Lower semi-continuity of the slope}\label{sec l.s.c.}
	
	The goal of this section is to give a precise definition of the slope functional $\Slo(\rho,\mu)$ (introduced in \eqref{roughdefislo}) and prove that it is lower semi-continuous with respect to the weak topology on measures.

	Notice that the formula \eqref{roughdefislo} for $\Slo(\rho,\mu)$ makes use of the gradients of $\rho$ and $\mu$, but this expression is not well-defined for any arbitrary couple of measures $(\rho,\mu)$. This leads us to consider the following space:
	\begin{definition} \label{defn: H}
		We define the class $\mc H$ as the set of all pairs of densities $\rho, \mu \in L^1(\O)\cap \mc P(\O) $ such that 
		\begin{enumerate}[label=\roman*.]
			\item \label{H:prop1} For every $\eta \in W_c^{1,\infty}(B)$, we have $\eta(\rho,\mu) \in H^1(\O)$;
			\item \label{H:prop2} We also have $\sqrt{\rho+\mu} \in H^1(\O)$.
		\end{enumerate}
	\end{definition}
	
	The sets $A,B$ above are those defined in Section \ref{sec:convexification} (and we keep this notation in the whole presentation), and by  $W_c^{1,\infty}(B)$, we denote the set of Lipschitz functions whose support is compact inside $B$ (it can touch the axes $\R\times \{0\}$ and $\{0\}\times \R$, but not the separating curve between $A$ and $B$).
	
	\medskip
	For $(\rho,\mu)\in \mc H$, we do not have necessarily $\rho, \mu \in H^1(\O)$. However, we can define for couples $(\rho,\mu)\in \mc H$ a suitable notion of ``gradient'' for certain functions of $(\rho,\mu)$.  The notion of gradient we want to define should satisfy at least some chain-rule in order to be useful in the sequel, that is, we would like to have, for any $(\rho,\mu)\in \mc H$, that 
	$$
	\nabla (\chi(\rho,\mu)) = \partial_a \chi(\rho,\mu)\nabla \rho + \partial_b\chi(\rho,\mu)\nabla \mu.
	$$
	In particular, we need this to be true for some simple functions $\chi$, such as affine functions composed with suitable positive parts so that we have $\supp(\chi)\subset B$.
	
	Hence, let us define, for $(\alpha,\beta,c)\in \R_+^3$, the following function:
	\begin{equation}\label{T}
		T_{\alpha,\beta,c}(a,b) = \max\{c-\alpha a - \beta b , 0\}, 
	\end{equation}
	which we will refer to as ``triangle'' sometimes.
	Owing to the convexity of the set $A$, for every $(a,b)\in B$ we can find $(\alpha,\beta,c)\in \R_+^3$ such that $T_{\alpha,\beta,c}(a,b)>0$ and $T_{\alpha,\beta,c}$ is compactly supported in $B$. Whenever $\supp T_{\alpha,\beta,c}\subset B$, then $T_{\alpha,\beta,c}(\rho,\mu)\in H^1(\Omega)$ for every $(\rho,\mu)\in \mc H$. 
	
	\begin{definition}
		Let us fix a countable dense set $E$ of parameters $(\alpha,\beta,c)$: for simplicity we choose $E=\mathbb Q_+^3$. Given $(\rho,\mu)\in \mc H$, for each $(\alpha,\beta,c)\in E$ fix a representative of the weak gradient of $T_{\alpha,\beta,c}(\rho,\mu)$. Take $x\in \Omega$ such that $(\rho(x),\mu(x)) \in B$ and $(\alpha,\beta,c) \in E$ and $\e>0,\e\in\mathbb Q$ such that the supports of $T_{\alpha,\beta,c}, T_{\alpha-\e,\beta,c},T_{\alpha,\beta-\e,c} $ are contained in $B$ and $(\rho(x),\mu(x)) \in \supp T_{\alpha,\beta,c}$. Then we define the gradients of $\rho (x)$ and $\mu(x)$ as the following:
		\begin{multline*}
			(\nabla \rho(x),\nabla \mu(x)) := 
			\Big (\nabla \Big (\frac{1}{\e}(T_{\alpha-\e,\beta,c}(\rho,\mu) - T_{\alpha,\beta,c}(\rho,\mu)) \Big )(x),\nabla \Big (\frac{1}{\e}(T_{\alpha,\beta-\e,c}(\rho,\mu) - T_{\alpha,\beta,c}(\rho,\mu)) \Big )(x)\Big).
		\end{multline*}
	\end{definition} 
	
	Let us observe that the above definition of the gradients does not depend on the choices of $\alpha,\beta,c$, and $\e$, except possibly on a negligible set of points $x$. Indeed, for a.e. $x$ such that $(\rho(x),\mu(x)) \in \supp T_{\alpha,\beta,c}\cap T_{\tilde \alpha, \tilde \beta, \tilde c}$, then the functions $(T_{\alpha-\e,\beta,c}(\rho,\mu) - T_{\alpha,\beta,c}(\rho,\mu)) / \e$ and $(T_{\tilde \alpha -\e,\tilde \beta ,\tilde c}(\rho,\mu) - T_{\tilde \alpha,\tilde \beta,\tilde c}(\rho,\mu)) / \e$ are in $H^1$ and coincide, hence their gradients coincide almost everywhere at these points, see \cite{B}.
	
	\medskip 
	Let us also mention that, with such definition of the gradients, we have the following chain-rule: for any $\chi \in W_c^{1,\infty}(B)$, and for any $(\rho,\mu)\in \mc H$ we have
	\begin{align} \label{chain_rule}
		\chi(\rho,\eta)\in H^1(\O) \quad  \mbox{and} \quad  \nabla (\chi(\rho,\mu)) = \chi_a(\rho,\mu)\nabla \rho + \chi_b(\rho,\mu)\nabla \mu.
	\end{align}
	
	To prove this fact, let us start with considering the case where $\chi$ is compactly supported in $\supp T_{\alpha,\beta,c}$, for some $\alpha, \beta ,c\in \mathbb Q$. Then, for $\e$ small enough, we have, for $x$ such that $(\rho(x),\mu(x))\in \supp \chi$,
	\begin{align*}
		\chi(\rho,\mu) = \chi\Big(\frac{1}{\e}(T_{\alpha-\e,\beta,c}(\rho,\mu) - T_{\alpha,\beta,c}(\rho,\mu)),\frac{1}{\e}(T_{\alpha,\beta-\e,c}(\rho,\mu) - T_{\alpha,\beta,c}(\rho,\mu)) \Big).
	\end{align*}
	
	As both the functions are in $H^1$, their gradients coincide on the set where $(\rho,\mu)\in \supp \chi$, and on this set, applying the chain rule on the right-hand side yields the desired equality. Outside of this set, the chain rule is direct and everything is zero. We then conclude by observing that any compact subset of $B$ is contained in a finite union of supports of sets of the form $\supp T_{\alpha,\beta,c}$ for $\alpha, \beta ,c\in \mathbb Q$, and then we use a partition of the unity to generalize the result to general $\chi \in W_c^{1,\infty}(B)$.
	
	We note that, if $\rho,\mu \in H^1(\O)$, then the gradient defined above coincide with the usual gradient.
	
	\medskip
	
	To conclude these remarks on the gradients of $(\rho,\mu)$, we observe that our definition of the space $\mc H$ does not allow us to define the gradients of $\rho,\mu$ properly when $(\rho,\mu)$ lies in the set $A$. However, in this set, and this will be sufficient for our needs, the gradient of the sum $\rho + \mu$ is well defined, i.e, it is measurable. Indeed, we know that $\sqrt{\rho+\mu}$ is in $H^1$, hence, we set $\nabla (\rho+\mu) := 2(\sqrt{\rho+\mu})\nabla \sqrt{\rho+\mu}$.
	
	It follows from the above discussion that, for any Lipschitz function $h=h(a,b)$ that depends only on $a+b$, if $(a,b)\in A$, then $\nabla (h(a,b))$ is well defined.
	
	\medskip
	
	We now define the slope functional on the space $\mc H$.
	\begin{definition}\label{def slope}
		Let $(\rho,\mu)\in \mc H$. Then, we define the slope functional $\Slo$ as
		$$
		\Slo(\rho,\mu) := \int_{(\rho,\mu)\in B} \left \vert\frac{\nabla \rho}{\rho} + \mu \right \vert^2\rho + \left \vert \frac{\nabla \mu}{\mu} + \rho \right \vert^2\mu + \int_{(\rho,\mu)\in A} (\tilde f^{\prime\prime})^2(S) \vert\nabla S\vert^2S,
		$$
		where $S:=\rho+\mu$. 
	\end{definition}
	Note that the above formula for the slope has been obtained by expanding the expression
	$$
	\Slo(\rho,\mu) := \int\vert\nabla f_a(\rho,\mu) \vert^2\rho + \int\vert\nabla f_b(\rho,\mu) \vert^2\mu.
	$$
	
	We are now in a position to state the main result that we prove in this section.
	
	\begin{theorem}\label{th l.s.c.}
		Let $(\rho_n,\mu_n) \in \mc H$ be such that
		$$
		(\rho_n,\mu_n)\underset{n\to+\infty}{\longrightarrow} (\rho,\mu) \in \mc H,
		$$
		where the above convergence is weak in the sense of measures. Then we have
		$$
		\Slo(\rho,\mu)\leq \liminf_{n\to +\infty} \Slo(\rho_n,\mu_n).
		$$
	\end{theorem}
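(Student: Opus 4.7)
The plan is to prove lower semi-continuity separately for the two contributions of $\Slo$, those over $\{(\rho,\mu)\in A\}$ and over $\{(\rho,\mu)\in B\}$, via a Mazur/Ioffe-type argument for convex integrands in the gradients combined with a monotone approximation of the indicators $\mathbf 1_A,\mathbf 1_B$ by smooth truncations of $(\rho,\mu)$. Passing to a subsequence, I may assume $\Slo(\rho_n,\mu_n)\to M<\infty$.

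\emph{Compactness.} From Remark~\ref{rem:r_0}, $s(\tilde f'')^2(s)\geq r_0^2/s$ on $[2,+\infty)$, so the A-part of the slope controls $\int_{\{(\rho_n,\mu_n)\in A\}}|\nabla S_n|^2/S_n$. Expanding the B-integrand as the quadratic form
$$Q(\nabla\rho,\nabla\mu)=\left(\tfrac1\rho+\mu\right)|\nabla\rho|^2+\left(\tfrac1\mu+\rho\right)|\nabla\mu|^2+4\,\nabla\rho\cdot\nabla\mu,$$
whose $2\times 2$ matrix has determinant $\bigl(\tfrac1{\sqrt{\rho\mu}}-\sqrt{\rho\mu}\bigr)^2$, a pointwise check gives $Q\geq c\,|\nabla S|^2/S$ on $B$ for some universal $c>0$ (using that $\rho\mu<1$ on $B$ and that, near the single degeneracy point $(1,1)\in\overline B$, $Q$ converges to $2|\nabla S|^2$, whose degenerate direction is $\nabla\rho=-\nabla\mu$, transverse to $\nabla S$). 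Summing, $\{\sqrt{S_n}\}$ is bounded in $H^1(\Omega)$, so by Rellich $\sqrt{S_n}\to\sqrt S$ strongly in $L^2$ and $S_n\to S$ a.e. Moreover, for each triangle $T=T_{\alpha,\beta,c}$ with $\supp T\subset B$, the identities $(1-\rho\mu)\nabla\rho=\rho(\nabla f_a-\mu\nabla f_b)$ and its symmetric counterpart (valid on $B$), together with the slope bound, yield a uniform $H^1$ bound on $T(\rho_n,\mu_n)$; by testing against two non-parallel triangles and inverting, one also gets a.e. convergence $(\rho_n,\mu_n)\to(\rho,\mu)$ on $\{(\rho,\mu)\in B\}$.

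\emph{Passage to the liminf.} Let $\chi_k\in C_c^\infty(B)$ with $\chi_k\nearrow\mathbf 1_B$, supported in compact subsets of $B$ and bounded away from the axes and from the separating curve. For each fixed $k$, the localized B-integral $(\rho,\mu)\mapsto\int\chi_k(\rho,\mu)\bigl[\rho|\nabla f_a|^2+\mu|\nabla f_b|^2\bigr]$ is a convex integrand in $(\nabla\rho,\nabla\mu)$ with continuous and bounded coefficients in $(\rho,\mu)$ on $\supp\chi_k$; combining the a.e.\ convergence of $\chi_k(\rho_n,\mu_n)$ with weak $L^2$ convergence of the truncated gradients (from the $H^1$ bound above and the chain rule of Section~\ref{sec l.s.c.}), a standard Ioffe lower semi-continuity theorem (equivalently, Mazur's lemma applied to the convex integrand) yields
$$\int\chi_k(\rho,\mu)\bigl[\rho|\nabla f_a|^2+\mu|\nabla f_b|^2\bigr]\leq\liminf_n\Slo(\rho_n,\mu_n).$$
Monotone convergence as $k\to\infty$ recovers the B-part of $\Slo(\rho,\mu)$. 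The A-part is treated identically using the primitive $\Psi$ with $\Psi'(s)=\sqrt s\,|\tilde f''(s)|$, so that the A-integrand equals $|\nabla\Psi(S)|^2$; continuous cut-offs $\xi_k$ approximating $\mathbf 1_{\mathrm{int}(A)}$ from below, together with the strong convergence $S_n\to S$, identify weak limits and deliver the analogous inequality.

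\emph{Main obstacle.} The crux is the pointwise inequality $Q\geq c|\nabla S|^2/S$ on $B$ used to extract the $H^1$ compactness of $\sqrt{S_n}$: the matrix of $Q$ is positive semi-definite on $\{\rho\mu\leq 1\}$ but degenerates on the curve $\rho\mu=1$, and one must verify uniformly on $\overline B$ (which meets $\{\rho\mu=1\}$ only at $(1,1)$) that the degenerate direction is orthogonal to $\nabla S$. A secondary subtlety, bypassed by the $\chi_k,\xi_k$ scheme, is that the two regions $\{(\rho,\mu)\in A\}$ and $\{(\rho,\mu)\in B\}$ are defined by the unknown and so cannot be frozen along the sequence; monotone approximation by open interiors is what lets one decouple the region from $n$ and apply the convex l.s.c.\ on each fixed level.
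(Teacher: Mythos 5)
Your compactness step contains a genuine error, and it sits exactly at the point where the paper has to be (and is) careful. The pointwise bound $Q\geq c\,|\nabla S|^2/S$ on $B$ with a universal $c>0$ is false. Test it at the points $(\rho,\mu)=(1-t,1+t)$, which belong to $B$ for $0<t<1$ (indeed $A\cap\{a+b=2\}=\{(1,1)\}$, since the separating curve is tangent to the line $a+b=2$ at $(1,1)$ and $A$ lies on the side $a+b\geq 2$). There $S=2$, $P:=\rho\mu=1-t^2$, and the inequality, viewed as positive semi-definiteness of the $2\times2$ coefficient matrix of $Q-\tfrac cS|p+q|^2$, requires
\begin{equation*}
\det\begin{pmatrix}\tfrac1\rho+\mu-\tfrac cS & 2-\tfrac cS\\[2pt] 2-\tfrac cS & \tfrac1\mu+\rho-\tfrac cS\end{pmatrix}
=\frac{(1-P)^2}{P}-\frac cS\Big(\frac SP+S-4\Big)
=\frac{t^4-c\,t^2}{1-t^2}\;\geq\;0,
\end{equation*}
which fails for all $0<t<\sqrt c$. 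Your limiting argument (``$Q\to 2|\nabla S|^2$ at $(1,1)$, whose kernel is transverse to $\nabla S$'') does not save this: at nearby points of $B$ the near-kernel direction of $Q$ is tilted by $O(t)$ away from $(1,-1)$, so $Q$ is of order $t^4$ along it while $|p+q|^2$ is of order $t^2$. Consequently the slope bound does not give a uniform $H^1$ bound on $\sqrt{S_n}$ by this route, and with it falls your a.e.\ convergence $S_n\to S$ and everything downstream. This degeneracy is precisely why the paper proceeds differently: on $B$ it only proves $|\nabla S_n|^2(1-S_n^2/4)^2/(4S_n)\lesssim$ slope integrand, recovers a.e.\ convergence of $S_n$ through the strictly increasing primitive $H$ of $h(s)=\min\{|1-s^2/4|/(2\sqrt s),\,1/\sqrt s\}$ (which vanishes at $s=2$), and for the semicontinuity itself replaces $|\nabla S|^2/S$ by $|\nabla(S+P)|^2/S$ (Lemma \ref{lem: ineq}), a quantity whose lower bound by the slope integrand holds with no degeneracy on all of $\R_+^2$ and is an equality on $A$.

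A second gap is in your treatment of the $A$-part. Localizing with cut-offs $\xi_k$ supported inside $A$ forces you to pass to the limit in $\xi_k(\rho_n,\mu_n)$, but on $A$ there is no separate compactness for $(\rho_n,\mu_n)$: only compositions with functions that are constant on $A$ can be passed to the limit (this is exactly Proposition \ref{prop cv B}), and strong convergence of $S_n$ does not decide whether $(\rho_n(x),\mu_n(x))$ sits in $A$ or in $B$; the regions cannot be frozen along the sequence. The paper circumvents this by never localizing on $A$: it uses the factor $1-\chi(\rho_n,\mu_n)$ with $\chi$ compactly supported in $B$ (so $1-\chi\equiv 1$ on $A$) together with the everywhere-valid lower bound through $\sqrt{S_n+P_n}$, and only in the limit restricts the integral to $\{(\rho,\mu)\in A\}$ (Proposition \ref{th l.s.c. 1}). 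Relatedly, your displayed $B$-inequality has the full $\Slo(\rho_n,\mu_n)$ on the right; to add the $A$- and $B$-contributions you need the localized right-hand sides $\int\chi_k(\rho_n,\mu_n)[\cdots]$ and $\int(1-\chi_k)(\rho_n,\mu_n)[\cdots]$ and superadditivity of the liminf, as in the paper's final argument. Finally, identifying the strong $L^2$ limit of $T_{\alpha,\beta,c}(\rho_n,\mu_n)$ as $T_{\alpha,\beta,c}(\rho,\mu)$ (your ``inverting two triangles'') is not automatic under weak convergence because of the positive part; it requires the Egoroff argument of Lemma \ref{lemma affine}. These last points are repairable, but the pointwise inequality underlying your compactness step is not.
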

	
	\subsection{Preliminary results}
	\label{sec:preliminaries_l.s.c.}
	In this section, we provide some preliminary results that will be used in the proof of Theorem \ref{th l.s.c.}.
	We start with proving the following proposition:
	\begin{prop}\label{prop cv B}
		Assume that $(\rho_n,\mu_n)\in \mc H$ converges weakly, as $n \to +\infty$, to $(\rho,\mu)\in \mc P(\O)$, and that $\Slo(\rho_n,\mu_n)$ is bounded independently of $n$. Then, for any Lipschitz continuous function $\chi(a,b)$ which is constant everywhere on $A$, we have
		$$
		\chi(\rho_n,\mu_n) \underset{n\to +\infty}{\longrightarrow} \chi(\rho,\mu) \quad \mbox{ strongly in } L^2.
		$$
	\end{prop}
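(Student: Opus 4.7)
The plan is to reduce to the case $\chi \in W_c^{1,\infty}(B)$, obtain a uniform $H^1$-bound for $\chi(\rho_n,\mu_n)$ via the slope (hence $L^2$-precompactness by Rellich), and identify the limit with a Young-measure argument that combines the strong $L^2$-convergence of $\sqrt{\rho_n+\mu_n}$ with that of one auxiliary triangle function.

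\emph{Reduction.} Writing $c_0$ for the value of $\chi$ on $A$, the function $\chi-c_0$ is Lipschitz and vanishes on $A$. Since every point of $B$ lies at bounded distance from $A$ (the two branches of the separating curve asymptote to the coordinate axes as $s\to\infty$), $\chi-c_0$ is bounded on $\R_+^2$. Truncating it near $\partial B$ and at infinity along the axes produces functions $\chi_k\in W_c^{1,\infty}(B)$ with $\chi_k\to\chi-c_0$ uniformly. On the bounded domain $\Omega$, uniform $L^\infty$-closeness translates into $L^2$-closeness of $\chi_k(\rho_n,\mu_n)$ uniformly in $n$, so it suffices to prove the statement for $\chi\in W_c^{1,\infty}(B)$.

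\emph{$H^1$-bound.} For such a $\chi$, the compact set $\supp\chi\Subset B$ stays at positive distance from $\partial B=\partial A$ and hence from the unique boundary point $(1,1)$ where $ab=1$; thus $ab\le\gamma<1$ and $a,b\le M$ on $\supp\chi$ for some constants. Expanding the slope density on $B$ and applying Cauchy--Schwarz to the mixed term gives the pointwise inequality
\[
\rho|\nabla f_a(\rho,\mu)|^2+\mu|\nabla f_b(\rho,\mu)|^2 \;\ge\; (1-\sqrt{\rho\mu})^2\left(\frac{|\nabla\rho|^2}{\rho}+\frac{|\nabla\mu|^2}{\mu}\right).
\]
On the set $\{(\rho_n,\mu_n)\in\supp\chi\}$ the slope density therefore dominates $(1-\sqrt\gamma)^2\bigl(|\nabla\rho_n|^2/\rho_n+|\nabla\mu_n|^2/\mu_n\bigr)$. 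Combined with $\rho_n,\mu_n\le M$ there and the chain rule \eqref{chain_rule}, this yields $\int|\nabla\chi(\rho_n,\mu_n)|^2\le C\,\Slo(\rho_n,\mu_n)$ uniformly in $n$. Together with $\|\chi(\rho_n,\mu_n)\|_\infty\le\|\chi\|_\infty$, Rellich--Kondrachov extracts a subsequence along which $\chi(\rho_n,\mu_n)\to g$ strongly in $L^2(\Omega)$ and a.e.

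\emph{Identification.} The slope bound on $A$ gives $4r_0^2|\nabla\sqrt{S_n}|^2\le(\tilde f'')^2(S_n)|\nabla S_n|^2 S_n$, and the $B$-side of the slope controls $|\nabla\sqrt{S_n}|^2$ away from $(1,1)$; any remaining piece is handled by the $\mc H$-assumption. Hence $\sqrt{S_n}$, with $S_n:=\rho_n+\mu_n$, is uniformly $H^1$-bounded, and along a further subsequence $\sqrt{S_n}\to\sqrt S$ strongly in $L^2$, where $S:=\rho+\mu$ and the limit is identified via weak convergence of measures. Fix one triangle $T=T_{\alpha,\beta,c}\in W_c^{1,\infty}(B)$ with $\alpha\neq\beta$ and $\supp T\supset\supp\chi$: by the same argument $T(\rho_n,\mu_n)\to h$ strongly in $L^2$ along a further subsequence. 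Let $\nu_x$ denote the Young measure on $\R_+^2$ generated by $(\rho_n,\mu_n)$. Weak convergence gives $\int a\,d\nu_x=\rho(x)$ and $\int b\,d\nu_x=\mu(x)$; strong $L^2$-convergence of $\sqrt{S_n}$ forces $a+b$ to be $\nu_x$-a.s.\ constant equal to $S(x)$; strong $L^2$-convergence of $T(\rho_n,\mu_n)$ forces $T$ to be $\nu_x$-a.s.\ constant equal to $h(x)$. If $h(x)>0$, on $\{T>0\}$ the function $T$ is affine with gradient not proportional to $(1,1)$ (since $\alpha\neq\beta$), so the two constraints pin $\nu_x$ down to a single point, necessarily the mean $(\rho(x),\mu(x))$. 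If $h(x)=0$, then $\nu_x$ sits in the convex half-plane $\{\alpha a+\beta b\ge c\}$, disjoint from $\supp\chi$, so $\chi\equiv 0$ $\nu_x$-a.s., and the mean $(\rho(x),\mu(x))$ lies there as well, giving $\chi(\rho(x),\mu(x))=0$. In both cases $\int\chi\,d\nu_x=\chi(\rho(x),\mu(x))$, which means $\chi(\rho_n,\mu_n)\rightharpoonup\chi(\rho,\mu)$ weakly; combined with the strong $L^2$-limit, this forces $g=\chi(\rho,\mu)$. The convergence of the whole sequence follows from a standard subsequence argument.

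The main obstacle is the identification step: securing the uniform $H^1$-bound on $\sqrt{\rho_n+\mu_n}$ in spite of the possible degeneracy of the slope density near the point $(1,1)\in\partial B$, and then carefully using the twin rigidities from $\sqrt{S_n}$ and from $T$ to Dirac-reduce the Young measure, with the case $h(x)=0$ handled through convexity of the supporting half-plane.
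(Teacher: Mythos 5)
Your reduction step and your $H^1$ bound for $\chi\in W_c^{1,\infty}(B)$ are correct and essentially follow the paper (the pointwise inequality $\rho|\nabla f_a|^2+\mu|\nabla f_b|^2\ge(1-\sqrt{\rho\mu})^2\bigl(|\nabla\rho|^2/\rho+|\nabla\mu|^2/\mu\bigr)$ on $B$ is a neat shortcut for the paper's inversion of the fields $X_n,Y_n$ in Lemma~\ref{lem cv B}). The identification step, however, has genuine gaps. First, the uniform $H^1$ bound on $\sqrt{S_n}$ is not available: the slope density, viewed as a quadratic form in $(\nabla\rho,\nabla\mu)$, degenerates as $(a,b)\to(1,1)$ inside $B$ --- at $(1-t,1+t)$ its minimum over $\{|\nabla S|=1\}$ is of order $t^2$ --- so the slope does not control $\int|\nabla\sqrt{S_n}|^2$ near $(1,1)$, and ``handled by the $\mc H$-assumption'' cannot close this, since membership in $\mc H$ is a qualitative property of each $(\rho_n,\mu_n)$ and carries no bound uniform in $n$. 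The paper never claims such a bound under these hypotheses: it only obtains a.e.\ convergence of $S_n$ by using a weight $h(S_n)$ vanishing at $S_n=2$, the uniform bound holding only for $\sqrt{S_n+P_n}$ (Lemma~\ref{lem: ineq}); the $L^2_tH^1_x$ bound on $\sqrt S$ used later comes from the flow interchange (Lemma~\ref{lem:energy_estimate}), not from the slope. Second, a single triangle $T_{\alpha,\beta,c}$ with support contained in $B$ and containing $\supp\chi$ need not exist: $B$ is not convex (it wraps around $A$ towards both axes), which is precisely why the paper covers $\supp\chi$ by finitely many triangles, uses a partition of unity, and inside one triangle writes $\chi$ as a Lipschitz function of two triangle functions. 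Third, your Young-measure argument uses the barycenter identities $\int a\,\mathrm{d}\nu_x=\rho(x)$, $\int b\,\mathrm{d}\nu_x=\mu(x)$; these require equi-integrability of $(\rho_n,\mu_n)$, which is not among the hypotheses (weak convergence of probability measures allows concentration), so ``necessarily the mean'' is unjustified.

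The paper's Lemma~\ref{lemma affine} shows how to avoid all three problems at once: for a triangle function one has the one-sided bound $c-\alpha\rho_n-\beta\mu_n\le T_{\alpha,\beta,c}(\rho_n,\mu_n)$, whose left-hand side passes to the limit under mere weak convergence, giving $T(\rho,\mu)\le u$; then on $\{u>\delta\}$, Egoroff plus the uniform convergence force $T(\rho_n,\mu_n)$ to coincide eventually with the affine expression, yielding the reverse inequality. This identifies the strong $L^2$ limit of $T(\rho_n,\mu_n)$ with $T(\rho,\mu)$ using only the weak convergence and the compactness you already established, with no information about the region $A$, no bound on $\sqrt{S_n}$, and no equi-integrability. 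If you replace your Young-measure step by this argument, and pass from triangles to general $\chi\in W_c^{1,\infty}(B)$ via the two-triangle reparametrization and a partition of unity as in the paper, your proof closes; the reduction to $W_c^{1,\infty}(B)$ and the $H^1$/Rellich part can stay as you wrote them.
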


	The proof of Proposition \ref{prop cv B} relies on two lemmas.
	
	\begin{lemma}\label{lem cv B}
		Assume that $(\rho_n,\mu_n)$ and $\chi$ satisfy the hypotheses of Proposition \ref{prop cv B}. Assume in addition that $\chi$ is constant outside a compact set contained in $B$. Then, the sequence $(\chi(\rho_n,\mu_n))_{n\in\N}$ is bounded in $H^1(\O)$. In particular, it has a subsequence that converges strongly in  $L^2$ and weakly in $H^1$.
	\end{lemma}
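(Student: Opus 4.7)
The strategy is to combine the chain rule \eqref{chain_rule} with the observation that on compact subsets of $B$, the slope functional controls the standard gradients of $\rho_n$ and $\mu_n$, which then yields an $H^1$ bound directly from the hypothesis $\Slo(\rho_n,\mu_n) \leq C$.

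First, I would apply \eqref{chain_rule} to $\chi$, which belongs to $W_c^{1,\infty}(B)$ by hypothesis. This gives, pointwise,
$$\nabla(\chi(\rho_n,\mu_n)) = \chi_a(\rho_n,\mu_n)\nabla \rho_n + \chi_b(\rho_n,\mu_n)\nabla \mu_n,$$
where $\chi_a,\chi_b$ are bounded and vanish off $\{(\rho_n,\mu_n)\in K\}$ for some compact set $K\subset B$. Hence $|\nabla(\chi(\rho_n,\mu_n))|^2 \leq C(|\nabla\rho_n|^2+|\nabla\mu_n|^2)\,\ind_{(\rho_n,\mu_n)\in K}$, and it remains to bound the right-hand side by the integrand of the slope.

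The core of the argument is to invert the pointwise linear system
$$\nabla f_a(\rho_n,\mu_n) = \frac{\nabla\rho_n}{\rho_n} + \nabla\mu_n, \qquad \nabla f_b(\rho_n,\mu_n) = \frac{\nabla\mu_n}{\mu_n} + \nabla\rho_n,$$
whose matrix has determinant $1/(\rho_n\mu_n) - 1$, strictly positive on $B$ (where $\rho\mu<1$). Since $K$ is disjoint from the separating curve between $A$ and $B$, one has $\rho_n\mu_n \leq c_0 <1$ on $\{(\rho_n,\mu_n)\in K\}$. Explicit inversion gives
$$\nabla\rho_n = \frac{\rho_n\nabla f_a - \rho_n\mu_n\nabla f_b}{1-\rho_n\mu_n}, \qquad \nabla\mu_n = \frac{\mu_n\nabla f_b - \rho_n\mu_n\nabla f_a}{1-\rho_n\mu_n}.$$
Using $1-\rho_n\mu_n \geq 1-c_0$ and the $L^\infty$ boundedness of $\rho_n,\mu_n$ on $K$, one deduces the pointwise bound
$$|\nabla\rho_n|^2 + |\nabla\mu_n|^2 \leq C\bigl(\rho_n|\nabla f_a(\rho_n,\mu_n)|^2 + \mu_n|\nabla f_b(\rho_n,\mu_n)|^2\bigr)$$
on $\{(\rho_n,\mu_n)\in K\}$, where $C$ depends only on $K$. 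The key point is that the factors $\rho_n,\rho_n\mu_n,\mu_n$ in the numerators of the inversion exactly cancel any blow-up of $1/\rho$ or $1/\mu$, so the bound remains valid at points where $K$ touches one of the coordinate axes.

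Integrating and recalling the form of $\Slo$ on the region $B$, one obtains $\int_\Omega|\nabla(\chi(\rho_n,\mu_n))|^2 \leq C\,\Slo(\rho_n,\mu_n) \leq C'$, uniformly in $n$. Since $\chi$ is globally bounded (being Lipschitz and constant outside a compact set) and $\Omega$ has finite measure, $\chi(\rho_n,\mu_n)$ is also uniformly bounded in $L^2(\Omega)$, and hence in $H^1(\Omega)$. Rellich--Kondrachov then produces the claimed subsequence converging strongly in $L^2$ and weakly in $H^1$. The only delicate step is the matrix inversion near the axes, as described above; everything else is a routine combination of the chain rule with the compactness of $K$ inside $B$.
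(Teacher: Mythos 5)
Your proposal is correct and follows essentially the same route as the paper's proof: apply the chain rule \eqref{chain_rule}, invert the pointwise system relating $(\nabla f_a,\nabla f_b)$ to $(\nabla\rho_n,\nabla\mu_n)$ via the formulas $\nabla\rho_n=\frac{\rho_n X_n-\rho_n\mu_n Y_n}{1-\rho_n\mu_n}$, and use that on the compact set $K\subset B$ one has $\rho_n\mu_n\leq c_0<1$ and bounded density values, so that the slope bound yields a uniform $H^1$ bound and Rellich gives the compactness. The only cosmetic difference is that the paper first subtracts the constant value of $\chi$ so that it genuinely lies in $W_c^{1,\infty}(B)$ before invoking the chain rule, a reduction your argument uses implicitly.
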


	\begin{proof} We start by defining, for all points $x$ such that $(\rho_n(x),\mu_n(x))\in B$, the following vector fields:
		\begin{equation}\label{def XY}
			X_n := \frac{\nabla \rho_n}{\rho_n} + \nabla \mu_n, \quad Y_n : = \frac{\nabla \mu_n}{\mu_n} + \nabla \rho_n.
		\end{equation}
		Note that $X_n$ is only defined on $\{\rho_n>0\}$, i.e. $\rho_n-$a.e., and $Y_n$ on $\{\mu_n>0\}$, i.e. $\mu_n-$a.e.
		Therefore, by the definition of $\Slo$, for any function $\eta$ compactly supported in $B$, we have that
		$$
		\int_\O (\rho_n\vert X_n\vert^2 + \mu_n\vert Y_n\vert^2)\eta(\rho_n,\mu_n) 
		$$
		is bounded independently of $n$.
		
		Again, for points $x$ such that $(\rho_n(x),\mu_n(x))\in B$, we can write
		$$
		\nabla \rho_n = \frac{\rho_n X_n - \rho_n\mu_n Y_n}{1-\rho_n \mu_n}, \quad \nabla \mu_n = \frac{\mu_n Y_n - \rho_n\mu_n X_n}{1-\rho_n \mu_n}.
		$$
		Hence, remembering that $\rho_n\mu_n < 1$ for $(\rho_n,\mu_n)\in B$, we obtain
		$$
		\eta(\rho_n,\mu_n)\vert \nabla \rho_n\vert^2 \leq  2\eta(\rho_n,\mu_n)\frac{\rho_n^2 \vert X_n\vert^2 + (\rho_n\mu_n)^2 \vert Y_n\vert^2}{(1-\rho_n \mu_n)^2}\leq 2\eta(\rho_n,\mu_n) \rho_n\frac{\rho_n \vert X_n\vert^2 + \mu_n \vert Y_n\vert^2}{(1-\rho_n \mu_n)^2},
		$$
		and
		$$
		\eta(\rho_n,\mu_n)\vert \nabla \mu_n\vert^2 \leq   2\eta(\rho_n,\mu_n) \mu_n\frac{\rho_n \vert X_n\vert^2 + \mu_n \vert Y_n\vert^2}{(1-\rho_n \mu_n)^2}.
		$$
		
		\medskip 
		Since $(\rho_n,\mu_n) \in \mc H$, we have that $\chi(\rho_n,\mu_n) \in H^1(\O)$ (where $\chi$ is given as in the statement of Proposition \ref{prop cv B}). Up to subtracting a constant, we can assume that $\chi$ is compactly supported in $B$. Now, let us prove that the $H^1$ norm of $\chi(\rho_n,\mu_n)$ is bounded independently of $n$.
		\begin{align*}
			\int_\O \vert \nabla \chi(\rho_n,\mu_n)\vert^2 &\leq 2\int_\O \vert \chi_a(\rho_n,\mu_n)\vert^2\vert \nabla \rho_n\vert^2 + \vert \chi_b(\rho_n,\mu_n)\vert^2\vert \nabla \mu_n\vert^2 \\
			&\leq 4\int_\O \frac{\vert \chi_a(\rho_n,\mu_n)\vert^2\rho_n}{(1-\rho_n\mu_n)^2}(\rho_n \vert X_n\vert^2 + \mu_n \vert Y_n\vert^2) + \frac{\chi_b(\rho_n,\mu_n)\vert^2\mu_n}{(1-\rho_n\mu_n)^2}(\mu_n \vert Y_n\vert^2+ \rho_n \vert X_n\vert^2).
		\end{align*}
		The first line above is obtained by using the chain rule given by \eqref{chain_rule} for the composition of Lipschitz functions and functions in $\mc H$.
		
		The quantities $\vert \chi_a(\rho_n,\mu_n)\vert^2\rho_n/ (1-\rho_n\mu_n)^2$ and $\vert \chi_b(\rho_n,\mu_n)\vert^2\mu_n/ (1-\rho_n\mu_n)^2$ are bounded because the support of $\chi$ is far from the set $A$, so that the product $\rho_n\mu_n$ is bounded above by a constant strictly less than $1$ on the set of points $x$ such that $(\rho_n(x),\mu_n(x))\in\supp(\chi)$. Hence the result follows.
	\end{proof}
	
	Let us now improve the above lemma by showing that, for $\chi$ as above, the $H^1$ weak and $L^2$ strong limit of $\chi(\rho_n,\mu_n)$ is $\chi(\rho,\mu)$, i.e. we can pass to the limit inside the function $\chi$. To do so, we start with considering the specific case were $\chi$ is of the form $T_{\alpha,\beta,c}$, defined in \eqref{T}. 
	\begin{lemma}\label{lemma affine}
		Let $(\alpha,\beta,c) \in \R_+^3$ be such that $\supp T_{\alpha,\beta,c}\subset B$. Then
		$$
		T_{\alpha,\beta,c}(\rho_n,\mu_n) \underset{n\to+\infty}{\longrightarrow} T_{\alpha,\beta,c}(\rho,\mu),
		$$
		and the convergence is strong in $L^2$ and weak in $H^1$.
	\end{lemma}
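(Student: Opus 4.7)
The plan is to invoke Lemma \ref{lem cv B} with $\chi = T_{\alpha,\beta,c}$, which is Lipschitz and, by hypothesis, compactly supported in $B$, so that $T_{\alpha,\beta,c} \in W_c^{1,\infty}(B)$ and the sequence $w_n := T_{\alpha,\beta,c}(\rho_n,\mu_n)$ is uniformly bounded in $H^1(\Omega)$. Extracting a subsequence, one may assume that $w_n$ converges weakly in $H^1$, strongly in $L^2$, and pointwise a.e.\ to some $u \in H^1(\Omega)$ with $0 \leq u \leq c$. The goal is to identify $u = T_{\alpha,\beta,c}(\rho,\mu) = (v)_+$ with $v := c - \alpha \rho - \beta \mu$; once this is done, uniqueness of the limit promotes the convergence from the subsequence to the entire sequence. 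Setting $v_n := c - \alpha \rho_n - \beta \mu_n$ so that $w_n = (v_n)_+$, the weak measure convergence gives $\int \varphi v_n \to \int \varphi v$ for all $\varphi \in C_b(\Omega)$; the pointwise inequality $v_n \leq w_n$, tested against nonnegative $\varphi \in C_c(\Omega)$, passes to the limit and yields $v \leq u$ a.e. Combined with $u \geq 0$, this already gives the easy inequality $u \geq (v)_+$ a.e.

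For the reverse inequality it suffices to prove $u = v$ a.e.\ on $\{u > 0\}$. Fix $\varepsilon > 0$, let $E_\varepsilon := \{u > \varepsilon\}$, and, using Egorov's theorem combined with the inner regularity of Lebesgue measure, choose a compact set $K_\varepsilon \subset E_\varepsilon$ with $|E_\varepsilon \setminus K_\varepsilon| < \varepsilon$ and $|K_\varepsilon \setminus K_\varepsilon^\circ| = 0$ on which $w_n \to u$ uniformly. For $n$ large enough, $w_n \geq \varepsilon/2 > 0$ on $K_\varepsilon$, forcing $v_n = w_n$ and therefore $\alpha \rho_n + \beta \mu_n \leq c$ on $K_\varepsilon$, which gives a uniform $L^\infty(K_\varepsilon)$ bound on $\rho_n$ and $\mu_n$. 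By Dunford--Pettis this upgrades the weak measure convergence on $K_\varepsilon$ to weak-$*$ convergence in $L^\infty(K_\varepsilon)$ of $\rho_n, \mu_n$ towards $\rho, \mu$ (the limit being identified by testing against continuous functions compactly supported in $K_\varepsilon^\circ$), hence $v_n \to v$ weakly-$*$ in $L^\infty(K_\varepsilon)$; combined with $v_n = w_n \to u$ strongly in $L^2(K_\varepsilon)$, the uniqueness of weak limits forces $u = v$ a.e.\ on $K_\varepsilon$. Letting $\varepsilon \to 0$ exhausts $\{u > 0\}$ up to a null set and yields $u = v$ there, while on $\{u = 0\}$ we have $v \leq u = 0$ and hence $(v)_+ = 0 = u$. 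Therefore $u = (v)_+$ a.e., as desired.

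The main technical obstacle is this identification of $u$ on $\{u > 0\}$: the positive-part map $t \mapsto t_+$ has a corner at $0$, so weak measure convergence of the linear quantities $v_n$ does not by itself commute with the nonlinearity. The Egorov-based localization is what resolves this difficulty, as it restricts attention to a subset where eventually $v_n > 0$, on which the nonlinearity trivializes to the identity; the $L^\infty$ bound on $\rho_n, \mu_n$ obtained for free on this subset then upgrades the weak measure convergence to weak-$*$ $L^\infty$ convergence, after which uniqueness of weak limits concludes the identification.
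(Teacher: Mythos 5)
Your proposal follows the same route as the paper's proof: compactness and a limit $u$ via Lemma \ref{lem cv B}, the inequality $T_{\alpha,\beta,c}(\rho,\mu)\le u$ obtained by passing the linear lower bound $c-\alpha\rho_n-\beta\mu_n\le T_{\alpha,\beta,c}(\rho_n,\mu_n)$ to the limit, and an Egorov localization to a region where $u$ is bounded away from zero, so that the positive part linearizes there. The difference is in how you identify the weak limit of the \emph{restricted} densities, and this is where your argument has a genuine gap.

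The step that fails is the construction of a compact set $K_\e\subset\{u>\e\}$ with $|E_\e\setminus K_\e|<\e$, uniform convergence of $w_n$ on $K_\e$, \emph{and} $|K_\e\setminus K_\e^\circ|=0$. Egorov plus inner regularity give the first two properties but say nothing about the third, and the third may be unobtainable: $u$ is only an $H^1$ function, so in dimension $d\ge 2$ the super-level set $\{u>\e\}$ can have positive measure while containing no ball, even up to null sets (for $d\ge 3$ one can build $u\in H^1$ equal to $1$ outside a dense family of balls $B(q_i,r_i)$ with $\sum_i r_i^{d-2}<\infty$ and $\sum_i r_i^d$ small, and vanishing on each $B(q_i,r_i/2)$; then no ball lies in $\{u>1/2\}$ up to a null set). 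In that case every compact $K\subset\{u>\e\}$ has empty interior, so $|K\setminus K^\circ|=|K|$ and your requirement forces $|K_\e|=0$; moreover without interior there are no test functions in $C_c(K_\e^\circ)$, so the weak-$*$ limit of $\rho_n\ind_{K_\e}$ in $L^\infty(K_\e)$ is not identified at all (note also that the uniform bound on $K_\e$ already gives weak-$*$ $L^\infty$ compactness; Dunford--Pettis is not the issue, the identification is). And this identification is precisely the delicate point: splitting $\rho_n=\rho_n\ind_D+\rho_n\ind_{D^c}$ on a measurable set $D$ and using nonnegativity only shows that any weak limit $g$ of $\rho_n\ind_D$ satisfies $g\le\rho$, which yields again $u\ge c-\alpha\rho-\beta\mu$ on $D$, i.e.\ the inequality you already have, not the reverse one you need. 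The paper works directly on the Egorov set $\omega\setminus E$ and passes $(c-\alpha\rho_n-\beta\mu_n)\ind_{\omega\setminus E}$ to the limit weakly; if one wants to justify such a restricted limit in detail, the right input is integrability rather than topology: the bounded slope gives, via Lemma \ref{lem: ineq}, a uniform $H^1$ bound on $\sqrt{S_n+P_n}$, hence an $L^p$ bound with $p>1$ on $\rho_n+\mu_n$, hence equi-integrability, so that $\rho_n\deb\rho$, $\mu_n\deb\mu$ hold weakly in $L^1$ and multiplication by the fixed indicator of the Egorov set becomes harmless. With such a replacement your argument closes; as written, the interior-regular compact set is neither available in general nor the right tool for this step.
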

	
	\begin{proof}
		We assume that $(\alpha,\beta,c)$ are chosen as in the statement of the lemma, and we omit writing them as subscripts of $T_{\alpha,\beta,c}$ in the proof.
		
		By Lemma \ref{lem cv B}, there exists $u\in H^1$, $u\geq 0$, such that
		$$
		T(\rho_n,\mu_n)\to u,
		$$
		strongly in $L^2$ and weakly in $H^1$. Using
		$$
		c - \alpha\rho_n-\beta\mu_n  \leq T(\rho_n,\mu_n),
		$$
		and the weak converge of $c - \alpha\rho_n-\beta\mu_n$ to $c-\alpha\rho-\beta\mu$, we find that $c - \alpha\rho-\beta\mu \leq u$. Taking the maximum with $0$, we get
		\begin{align}
			\label{u}
			T(\rho,\mu)\leq u.
		\end{align} 
		This already proves the equality $ T(\rho,\mu)=u$ on $\{u=0\}$. 
		
		Now, let $\delta>0$ be fixed and define the set $\omega := \{ u>\delta \} \subset \O$. Let $\e>0$ be fixed as well. Using Egoroff's theorem, we can find $E\subset \omega$ such that $\vert E\vert <\e$ and such that $T(\rho_n,\mu_n)$ converges uniformly to $u$ on $\omega\backslash E$. Taking $n$ large enough, we have 
		$$
		T(\rho_n,\mu_n)\mathbbm{1}_{\omega\backslash E} = (c - \alpha \rho_n - \beta \mu_n )\mathbbm{1}_{\omega\backslash E}.
		$$
		The term on the left-hand side converges to $u \mathbbm{1}_{\omega\backslash E}$ strongly in $L^2$ and the term on the right-hand side converges to $(c-\alpha\rho - \beta\mu )\mathbbm{1}_{\omega\backslash E}$ weakly. Then,
		$$
		u = c-\alpha\rho - \beta\mu  \leq T(\rho,\mu) \quad \text{ on } \omega\backslash E,
		$$
		and this is actually an equality due to \eqref{u}. Since the measure of $E$ can be taken arbitrarily small, and up to taking $\delta \to 0$,  we obtain that $u=T(\rho,\mu)$ a.e.
	\end{proof}
	
	We can now turn to the proof of Proposition \ref{prop cv B}.
	\begin{proof}[Proof of Proposition \ref{prop cv B}.]
		If the function $\chi$ is of the form $T_{\alpha,\beta,c}$, then Lemma \ref{lemma affine} tells us that the proposition is true. The strategy of the proof is to prove first that the proposition holds true for any function $\chi$ whose support is contained in a triangle, itself contained in $B$, then we show that it holds true for any function $\chi$ whose support does not touch $A$ (but may not be contained in a single triangle), and finally we consider the general case.
		
		\medskip 
		
		\textit{ \bf Step 1. The case where the support of $\chi$ is in a triangle.} Assume that the support of $\chi$ is compactly supported in the support of the triangle function $T_{\alpha+\ve,\beta+\ve,c-\ve}$ and that $(\alpha,\beta,c)$ and $\ve>0$ are chosen so that such a support is contained in $B$. We now define $T_1:=T_{\alpha+\ve,\beta,c}$ and $T_2:=T_{\alpha,\beta+\ve,c}$. We want to prove that $\chi(\rho,\mu)$ can be expressed as a Lipschitz function of $(T_1(\rho,\mu),T_2(\rho,\mu))$. To do so, we observe that there exists an affine function $L:\R^2\to\R^2$ such that $(a,b)=L(c-(\alpha+\ve)a-\beta b,c-\alpha a-(\beta+\ve)b)$.
		We then define a function $g:L^{-1}(\R_+^2)\cap (\R_+^2)\to\R$ via the formula
		$$g(t_1,t_2):=\begin{cases} 0 &\mbox{ if }t_1t_2=0,\\
			\chi(L(t_1,t_2))&\mbox{ if }t_1t_2>0.
		\end{cases}
		$$
		It is clear that $\chi(\rho,\mu)$ equals to $g(T_1(\rho,\mu),T_2(\rho,\mu))$ since if either $T_1(\rho,\mu)$ or $T_2(\rho,\mu)$ vanishes, then we have $\chi(\rho,\mu)=0$, while in the other case we can express $\rho$ and $\mu$ via the affine function $L$, and of course we only need to apply $g$ to values which are in $\R_+^2$ (since $T_1,T_2\geq 0$) and in $L^{-1}(\R_+^2)$ (since $\rho,\mu\geq 0$). We only need to prove that $g$ is Lipschitz continuous, which is not evident from its definition. To do so, we will prove that we have $g(t_1,t_2)=0$ if $t_1<\ve$ or $t_2<\ve$. 
		
		\medskip
		Indeed, setting $(a,b)=L(t_1,t_2)$, the condition $g(t_1,t_2)>0$ implies that $t_1,t_2>0$ and $(\alpha+\ve)a+(\beta+\ve)b+\ve<c$, i.e. $\ve b+\ve<t_1$, hence $t_1>\ve$ since $b>0$. Analogously, we also have $\ve a +\ve<t_2$, hence $t_2>\ve$. This shows that we could define 
		$$g(t_1,t_2):=\begin{cases} 0 &\mbox{ if }t_1<\ve \mbox{ or }t_2<\ve,\\
			\chi(L(t_1,t_2))&\mbox{ if }t_1t_2>0,
		\end{cases}
		$$
		and both expressions are Lipschitz continuous and they agree on the open set which is the intersection of the two domains of definition.
		
		Once we know that $\chi(\rho,\mu)$ can be written as $g(T_1(\rho,\mu),T_2(\rho,\mu))$, the claim follows.

		\medskip
		\textit{\bf Step 2. The case where the support of $\chi$ does not touch $A$.} Assume that the support of $\chi$ is compactly supported in $B$. We use the fact, which is based on the convexity of $A$, that the domain $B$ is a union of triangles of the form $\supp T$, even if functions supported in $B$ are not necessarily supported in one of such of triangles only. Hence, we can find a finite family $((\alpha_k,\beta_k,c_k) )_{k}$ such that
		
		$$
		\supp \chi \subset \bigcup_{k} \supp T_{\alpha_k,\beta_k,c_k}.$$
		
		Let $(\chi_k)$ be a family of functions compactly supported in $\supp T_{\alpha_k,\beta_k,c_k}$, such that
		$\chi= \sum_k \chi_k.$
		Then, we can apply the first step to conclude.
		
		\medskip
		\textit{ \bf Step 3. The general case.} 
		Up to subtracting a constant, we assume that $\chi=0$ on $A$. Let us start with assuming that $\chi$ is non-negative. We proceed by approximation. Let $\e>0$ be fixed, and define
		$$
		\chi_\e(a,b) := [\chi(a,b)-\e]_+.
		$$
		By Step 2 above, we have that
		$$
		\chi_\e(\rho_n,\mu_n) \to \chi_\e(\rho,\mu),
		$$
		and this convergence (up to a subsequence) holds strongly in $L^2$ and weakly in $H^1$. Then,
		\begin{align*}
			\vert \chi(\rho_n,\mu_n) - \chi(\rho,\mu)\vert &\leq \vert \chi(\rho_n,\mu_n) -  \chi_\e(\rho_n,\mu_n)\vert +   
			\vert \chi_\e(\rho_n,\mu_n) - \chi_\e(\rho,\mu)\vert
			+\vert \chi_\e(\rho,\mu) -  \chi(\rho,\mu)\vert \\
			&\leq 2\e + \vert \chi_\e(\rho_n,\mu_n) - \chi_\e(\rho,\mu)\vert.
		\end{align*}
		Taking the limit as $n \to +\infty$ yields the result.

		To treat the case where $\chi$ changes its sign, we can apply the argument on the positive and negative parts of $\chi$ separately.
	\end{proof}

	We conclude these preliminary results with the following lemma:
	
	\begin{lemma}\label{cv grad weak}
		Let $(\rho_n,\mu_n) \in \mc H$ converges weakly, as $n \to +\infty$, to $(\rho,\mu)\in \mc H$ and be such that $\Slo(\rho_n,\mu_n)$ is bounded independently of $n$. Then, for any $\chi$ compactly supported in $B$, we have
		$$
		\chi(\rho_n,\mu_n)\nabla\rho_n\deb \chi(\rho,\mu)\nabla\rho\quad \mbox{and}\quad 
		\chi(\rho_n,\mu_n)\nabla\mu_n\deb \chi(\rho,\mu)\nabla\mu,
		$$
		where the convergence is weak in $L^2$.
	\end{lemma}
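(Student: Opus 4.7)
My plan is to reduce to the case when $\chi$ is supported in a single triangle $\supp T_{\alpha,\beta,c}$, then use the representation of $\nabla \rho_n$ via triangle functions to rewrite the product $\chi(\rho_n,\mu_n)\nabla\rho_n$ as a product (strong limit in $L^2$) $\times$ (weak limit in $L^2$ coming from Lemma \ref{lemma affine}). I will only discuss the case of $\nabla \rho$, since the case of $\nabla\mu$ is completely symmetric.

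\emph{Step 1 (reduction via partition of unity).} As in Step 2 of the proof of Proposition \ref{prop cv B}, cover the compact set $\supp\chi \subset B$ by finitely many open supports of triangles $T_{\alpha_k,\beta_k,c_k}$, $k=1,\dots,N$, and pick a subordinate partition of unity so that $\chi=\sum_k \chi_k$ with each $\chi_k$ Lipschitz and compactly supported in the interior of $\supp T_{\alpha_k,\beta_k,c_k}$. By linearity it suffices to prove the conclusion for each $\chi_k$; fix one and drop the index $k$.

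\emph{Step 2 (local representation of $\nabla\rho_n$).} Choose a rational $\e>0$ small enough that $\supp T_{\alpha-\e,\beta,c}$ is still contained in $B$. On the interior of $\supp T_{\alpha,\beta,c}$ one has $T_{\alpha-\e,\beta,c}(a,b)-T_{\alpha,\beta,c}(a,b)=\e a$, so by the very definition of the gradient in $\mathcal H$ (Section \ref{sec l.s.c.}),
$$\chi(\rho_n,\mu_n)\nabla\rho_n=\chi(\rho_n,\mu_n)\,\frac{1}{\e}\nabla\bigl[T_{\alpha-\e,\beta,c}(\rho_n,\mu_n)-T_{\alpha,\beta,c}(\rho_n,\mu_n)\bigr]\quad\text{a.e. in }\Omega,$$
the equality being trivially true on $\{(\rho_n,\mu_n)\notin\supp\chi\}$ (both sides vanish) and coming from the definition on its complement, which is contained in $\{(\rho_n,\mu_n)\in\supp T_{\alpha,\beta,c}\}$. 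The very same identity holds at the limit $(\rho,\mu)\in\mathcal H$.

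\emph{Step 3 (passing to the limit).} By Lemma \ref{lemma affine}, both $T_{\alpha-\e,\beta,c}(\rho_n,\mu_n)$ and $T_{\alpha,\beta,c}(\rho_n,\mu_n)$ converge weakly in $H^1$ (and strongly in $L^2$) to their counterparts at $(\rho,\mu)$; thus the gradient inside the bracket converges weakly in $L^2(\Omega)$ to $\nabla\bigl[T_{\alpha-\e,\beta,c}(\rho,\mu)-T_{\alpha,\beta,c}(\rho,\mu)\bigr]$. On the other hand, Proposition \ref{prop cv B} (applied to the Lipschitz function $\chi$ extended by $0$ outside $B$) gives $\chi(\rho_n,\mu_n)\to\chi(\rho,\mu)$ strongly in $L^2$, and since $\chi$ is bounded this sequence is also uniformly bounded in $L^\infty$. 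The classical strong$\times$weak principle (if $f_n\to f$ in $L^2$ with $\|f_n\|_\infty\leq C$ and $g_n\deb g$ in $L^2$, then $f_n g_n\deb fg$ in $L^2$, by testing against $\varphi\in L^2$ and applying dominated convergence to $f_n\varphi$) yields
$$\chi(\rho_n,\mu_n)\,\frac{1}{\e}\nabla\bigl[T_{\alpha-\e,\beta,c}(\rho_n,\mu_n)-T_{\alpha,\beta,c}(\rho_n,\mu_n)\bigr]\deb \chi(\rho,\mu)\,\frac{1}{\e}\nabla\bigl[T_{\alpha-\e,\beta,c}(\rho,\mu)-T_{\alpha,\beta,c}(\rho,\mu)\bigr]$$
weakly in $L^2(\Omega)$. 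Combined with the representation of Step 2 applied at both $n$ and the limit, this gives exactly $\chi(\rho_n,\mu_n)\nabla\rho_n\deb\chi(\rho,\mu)\nabla\rho$ in $L^2$, which is stronger than the weak convergence claimed.

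The main (only) delicate point is the careful justification of the local representation in Step 2: one has to verify that the equality $\chi(\rho_n,\mu_n)\nabla\rho_n=\chi(\rho_n,\mu_n)\frac{1}{\e}\nabla[T_{\alpha-\e,\beta,c}-T_{\alpha,\beta,c}](\rho_n,\mu_n)$ indeed holds almost everywhere, by distinguishing the set where $(\rho_n,\mu_n)\in\supp\chi$ (on which the identity is the definition) from its complement (where both sides vanish thanks to $\chi\equiv 0$ there); this relies crucially on the choice $\supp\chi\subset\mathrm{int}\,\supp T_{\alpha,\beta,c}$ ensured in Step 1.
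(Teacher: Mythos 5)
Your proof is correct and follows essentially the same route as the paper: both reduce to a single triangle via a partition of unity, exploit the weak $H^1$ convergence of the triangle compositions from Lemma \ref{lemma affine}, and pass to the limit in the product of the bounded, strongly convergent factor $\chi(\rho_n,\mu_n)$ with a weakly convergent gradient, the only (immaterial) difference being that you form the difference quotient of the two triangle functions before multiplying by $\chi$ (which is literally the definition of $\nabla\rho_n$ on $\mc H$), whereas the paper multiplies by $\chi$ first and subtracts the two relations at the end. One cosmetic remark: your final conclusion is exactly the weak $L^2$ convergence asserted in the lemma, not something stronger.
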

	\begin{proof}
		As usual, by possibly decomposing $\chi$ into a finite sum we can assume that the support of $\chi$ is included in a triangle. We choose two different triangle functions $T_1:=T_{\alpha+\ve,\beta,c},T_2:=T_{\alpha,\beta,c}$ such that their supports include that of $\chi$. We use the weak $H^1$ convergence of $T_2(\rho_n,\mu_n)$ to $T_2(\rho,\mu)$ to deduce
		$$\ind_{\supp T_2}(\rho_n,\mu_n)(\alpha\nabla\rho_n+\beta\nabla\mu_n)\deb \ind_{\supp T_2}(\rho,\mu)(\alpha\nabla\rho+\beta\nabla\mu).$$
		This implies 
		$$\chi(\rho_n,\mu_n)(\alpha\nabla\rho_n+\beta\nabla\mu_n)\deb \chi(\rho,\mu)(\alpha\nabla\rho+\beta\nabla\mu)$$
		since we just need to multiply the above weak converging sequence with $\chi(\rho_n,\mu_n)$, which is dominated and pointwisely converging a.e. to $\chi(\rho,\mu)$.
		If we do the same for $T_1$ we also obtain
		$$\chi(\rho_n,\mu_n)((\alpha+\ve)\nabla\rho_n+\beta\nabla\mu_n)\deb \chi(\rho,\mu)((\alpha+\ve)\nabla\rho+\beta\nabla\mu).$$
		Subtracting the two relations and dividing by $\ve$, we obtain
		$$\chi(\rho_n,\mu_n)\nabla\rho_n\deb \chi(\rho,\mu)\nabla\rho.$$
		We can now also deduce that $\chi(\rho_n,\mu_n)\nabla\mu_n\deb \chi(\rho,\mu)\nabla\mu$ and the claim is proven.
	\end{proof}
	
	We make use of these preliminary results to prove Theorem \ref{th l.s.c.}.

	\subsection{Lower semi-continuity in the region \texorpdfstring{$B$}{B}}
	
	The goal of this section is to prove the following:
	
	\begin{prop}\label{th l.s.c. B}
		Let $(\rho_n,\mu_n) \in \mc H$ be such that $(\rho_n,\mu_n)$ converges weakly, as $n \to +\infty$, to $(\rho,\mu)\in\mc H$ and such that $\Slo(\rho_n,\mu_n)$ is bounded independently of $n$. Let $\chi$ be a Lipschitz function compactly supported in $B$. Then
		\begin{multline*}
			\int_\O \left(\vert \nabla f_a(\rho,\mu)\vert^2 \rho + \vert \nabla f_b(\rho,\mu)\vert^2 \mu\right)\chi(\rho,\mu) \\ \leq \liminf_{n\to +\infty} \int_\O \left(\vert \nabla f_a(\rho_n,\mu_n)\vert^2 \rho_n + \vert \nabla f_b(\rho_n,\mu_n)\vert^2 \mu_n\right)\chi(\rho_n,\mu_n).
		\end{multline*}
	\end{prop}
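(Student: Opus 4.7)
The plan is to rewrite the integrand as a Benamou--Brenier-type energy of the form $|m|^2/\psi$, to identify the weak $L^2$ limits of $\psi$ and $m$ using the preliminary results of Section~\ref{sec:preliminaries_l.s.c.}, and then to appeal to the joint lower semi-continuity of the functional $(\psi,m)\mapsto \int|m|^2/\psi$ with respect to weak convergence. To lighten the notation, I argue only for the $f_a$ contribution; the $f_b$ case is identical by symmetry. Without loss of generality the $\liminf$ on the right-hand side is finite, so that the hypotheses of Proposition~\ref{prop cv B} and Lemma~\ref{cv grad weak} are in force.

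Since $\nabla f_a(\rho,\mu) = \nabla\rho/\rho + \nabla\mu$, on setting $\Psi_n := \chi(\rho_n,\mu_n)\rho_n$ and $M_n := \chi(\rho_n,\mu_n)(\nabla\rho_n + \rho_n\nabla\mu_n)$ one has the pointwise identity
\[
\chi(\rho_n,\mu_n)\rho_n|\nabla f_a(\rho_n,\mu_n)|^2 = \frac{|M_n|^2}{\Psi_n}
\]
(with the standard convention that the ratio equals $0$ on $\{\Psi_n = 0 = |M_n|\}$ and $+\infty$ otherwise). The function $\tilde\chi(a,b) := a\chi(a,b)$ is Lipschitz continuous and compactly supported in $B$ (since $\chi$ is), so Proposition~\ref{prop cv B} gives the strong $L^2$ convergence $\Psi_n = \tilde\chi(\rho_n,\mu_n) \to \chi(\rho,\mu)\rho =: \Psi$. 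For $M_n$, Lemma~\ref{cv grad weak} applied to $\chi$ gives $\chi(\rho_n,\mu_n)\nabla\rho_n \deb \chi(\rho,\mu)\nabla\rho$ weakly in $L^2$; the same lemma applied to $\tilde\chi$ yields $\chi(\rho_n,\mu_n)\rho_n\nabla\mu_n \deb \chi(\rho,\mu)\rho\nabla\mu$ weakly in $L^2$. Summing, $M_n \deb M := \chi(\rho,\mu)\rho\nabla f_a(\rho,\mu)$ weakly in $L^2$.

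At this stage I would invoke the classical lower semi-continuity of the integral functional $(\psi,m)\mapsto \int_\Omega |m|^2/\psi$ with respect to weak $L^1$ (a fortiori $L^2$) convergence of its arguments, a consequence of the convexity and lower semi-continuity of the integrand $(a,b)\mapsto |b|^2/a$ on $[0,+\infty)\times\R^N$ (see, e.g., \cite{S17}). Combined with the above identifications, this yields
\[
\int_\Omega \chi(\rho,\mu)\rho|\nabla f_a(\rho,\mu)|^2 = \int_\Omega \frac{|M|^2}{\Psi} \leq \liminf_n \int_\Omega \frac{|M_n|^2}{\Psi_n} = \liminf_n \int_\Omega \chi(\rho_n,\mu_n)\rho_n|\nabla f_a(\rho_n,\mu_n)|^2,
\]
and adding the corresponding inequality for $f_b$ finishes the argument.

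The step I expect to require the most care is the identification of the weak limit of $M_n$, and specifically of the product $\chi(\rho_n,\mu_n)\rho_n\nabla\mu_n$: the trick of absorbing the factor $\rho_n$ into the cutoff via $\tilde\chi(a,b) = a\chi(a,b)$, which remains Lipschitz and compactly supported in $B$ whenever $\chi$ is, is precisely what makes Lemma~\ref{cv grad weak} directly applicable. An alternative, more elementary route would be to exploit the Fenchel-type identity $\int \chi\rho|V|^2 = \sup_\xi \int \chi(2\rho V\cdot\xi - \rho|\xi|^2)$ taken over smooth vector fields $\xi$, passing to the liminf against a fixed $\xi$ via the same weak/strong convergences and concluding by density; this would avoid any appeal to the abstract lsc of the Benamou--Brenier functional on measures.
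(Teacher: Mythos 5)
Your proposal is correct and follows essentially the same route as the paper: the paper also rewrites the integrand as the convex Benamou--Brenier-type quantity $|m|^2/\psi$ with $\psi_n=\rho_n\chi(\rho_n,\mu_n)$ and $m_n=(\nabla\rho_n+\rho_n\nabla\mu_n)\chi(\rho_n,\mu_n)$, obtains the strong $L^2$ convergence of $\psi_n$ from Proposition~\ref{prop cv B} and the weak $L^2$ convergence of $m_n$ from Lemma~\ref{cv grad weak} (using, exactly as you do, that $(a,b)\mapsto a\chi(a,b)$ is Lipschitz and compactly supported in $B$), and concludes by the standard lower semi-continuity of convex integrands under strong--weak convergence.
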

	
	\begin{proof}
		By the definitions of $f$ and $\chi$, we have
		\begin{align*}
			\int_\O (\vert \nabla( f_a( \rho_n,\mu_n))\vert^2\rho_n &+ \vert \nabla( f_b( \rho_n,\mu_n))\vert^2\mu_n)\chi(\rho_n,\mu_n) \\
			&= \int_\O \bigg (\left \vert \frac{\nabla \rho_n}{\rho_n} + \nabla \mu_n \right \vert^2\rho_n + \left \vert \frac{\nabla \mu_n}{\mu_n} + \nabla \rho_n \right \vert^2\mu_n \bigg )\chi(\rho_n,\mu_n)\\
			&=\int_\O \left( \frac{\vert \nabla \rho_n+\rho_n \nabla \mu_n \vert^2}{\rho_n} +  \frac{\vert \nabla \mu_n + \mu_n \nabla \rho_n\vert^2}{\mu_n}\right)\chi(\rho_n,\mu_n)\\
			&= \int_\O \left( \frac{\vert (\nabla \rho_n+\rho_n \nabla \mu_n)\chi(\rho_n,\mu_n) \vert^2}{\rho_n \chi(\rho_n,\mu_n)} +  \frac{\vert \nabla (\mu_n + \mu_n \nabla \rho_n )\chi(\rho_n,\mu_n)\vert^2}{\mu_n \chi(\rho_n,\mu_n)}\right).
		\end{align*}
		
		The latter expression is convex in the terms $(\nabla \rho_n + \rho_n\nabla \mu_n)\chi
		(\rho_n,\mu_n)$, $\rho_n\chi(\rho_n,\mu_n)$ (and same for the terms involving $\mu_n$). Then the standard lower semi-continuity results (see Chapter 4 in \cite{Giusti}) will prove the claim. We only have to note that Proposition \ref{prop cv B} provides
		$$
		\rho_n\chi
		(\rho_n,\mu_n)\underset{n\to+\infty}{\longrightarrow} \rho \chi
		(\rho,\mu),
		$$
		where the convergence is strong in $L^2$ (and the same also holds true for $\mu_n$) and Lemma \ref{cv grad weak} in turn provides
		$$
		(\nabla \rho_n + \rho_n\nabla \mu_n)\chi(\rho_n,\mu_n) \underset{n\to+\infty}{\longrightarrow}
		(\nabla \rho + \rho\nabla \mu)\chi(\rho,\mu),
		$$
		where the convergence is weak in $L^2$. We used here many times that the functions $(a,b)\mapsto a\chi(a,b),b\chi(a,b)$ are compactly supported in $B$ and Lipschitz continuous. This concludes the proof.
	\end{proof}

	\subsection{Lower semi-continuity in the region \texorpdfstring{$A$}{A}}

	The goal of this section is to prove the following:
	\begin{prop}\label{th l.s.c. 1}
		Let $(\rho_n,\mu_n) \in \mc H$ such that $(\rho_n,\mu_n)$ converges weakly, as $n \to +\infty$, to $(\rho,\mu)\in \mc H$ and such that $\Slo (\rho_n, \mu_n)$ is bounded independently of $n$. Let $\chi$ be a Lipschitz function compactly supported in $B$ such that $0\leq \chi \leq 1$. Then
		\begin{multline*}
			\int_{(\rho,\mu)\in A} \left(\vert \nabla f_a(\rho,\mu)\vert^2 \rho + \vert \nabla f_b(\rho,\mu)\vert^2 \mu\right)\\ \leq \liminf_{n\to +\infty} \int_\O \left(\vert \nabla (f_a(\rho_n,\mu_n)\vert^2 \rho_n + \vert \nabla (f_b(\rho_n,\mu_n)\vert^2 \mu_n\right)(1-\chi(\rho_n,\mu_n)).
		\end{multline*}
	\end{prop}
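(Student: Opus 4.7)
The plan is to rewrite the LHS in terms of $|\nabla h(S)|^2$ for a well-chosen Lipschitz function $h$ of $S=\rho+\mu$, reduce the statement to a lower semi-continuity inequality on the set $\{(\rho,\mu)\in A\}$, and close the argument using compactness of $\sqrt{S_n}$ together with an approximation of $\mathbbm{1}_A$ by continuous functions.

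Since $f(a,b) = \tilde f(a+b)$ on $A$, we have $\nabla f_a(\rho,\mu) = \nabla f_b(\rho,\mu) = \tilde f''(S)\nabla S$ on $\{(\rho,\mu)\in A\}$, so the integrand of the LHS equals $(\tilde f''(S))^2 S|\nabla S|^2$. I introduce $h:[0,+\infty)\to\R$ with $h\equiv 0$ on $[0,2]$ and $h'(s) = \sqrt{s}\,\tilde f''(s)$ on $(2,+\infty)$; thanks to Lemma \ref{lem:tilde_f_convex} together with $\tilde f''(2^+) = 1/4$ and $\tilde f''(s)\sim 1/s$ at infinity (see Lemma \ref{lem:Properties_pi} and Remark \ref{rem:r_0}), the map $\sqrt{s}\,\tilde f''(s)$ is bounded on $[2,+\infty)$, so $h$ is Lipschitz and $|\nabla h(S)|^2 = (\tilde f''(S))^2 S|\nabla S|^2$ on $\{S\geq 2\}\supset A$. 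Since $\supp\chi\subset B$ forces $\chi(\rho_n,\mu_n)=0$ on $\{(\rho_n,\mu_n)\in A\}$, writing $G_n := |\nabla f_a(\rho_n,\mu_n)|^2\rho_n + |\nabla f_b(\rho_n,\mu_n)|^2\mu_n$ one has pointwise $G_n(1-\chi(\rho_n,\mu_n))\geq |\nabla h(S_n)|^2\,\mathbbm{1}_{(\rho_n,\mu_n)\in A}$. The proposition thus reduces to proving
\begin{equation*}
\int_{(\rho,\mu)\in A}|\nabla h(S)|^2 \leq \liminf_n \int_{(\rho_n,\mu_n)\in A}|\nabla h(S_n)|^2.
\end{equation*}

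For compactness, the coercivity $(\tilde f''(s))^2 s^2 \geq r_0^2$ on $[2,+\infty)$ (from Remark \ref{rem:r_0}), combined with $|\nabla h(S_n)|^2 = 4(\tilde f''(S_n))^2 S_n^2|\nabla\sqrt{S_n}|^2$ on $\{S_n\geq 2\}$, yields $|\nabla h(S_n)|^2 \geq 4r_0^2|\nabla\sqrt{S_n}|^2$ there. Together with the $H^1$-regularity of $\sqrt{S_n}$ built into $\mc H$ (whose norm is controlled uniformly from the slope bound, using the $A$-part directly and for the $B$-part a Cauchy--Schwarz argument analogous to the one in the proof of Proposition \ref{th l.s.c. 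B}), I get a uniform $H^1(\O)$-bound on $\sqrt{S_n}$. By Rellich, up to a subsequence, $\sqrt{S_n}\to\sqrt{S}$ strongly in $L^2(\O)$ and a.e., so the Lipschitz property of $h$ gives $h(S_n)\to h(S)$ in $L^2(\O)$, and passing to a further subsequence, $\nabla h(S_n)\rightharpoonup \nabla h(S)$ weakly in $L^2(\O)$.

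Finally, since $A$ is closed with $\partial A$ of zero Lebesgue measure in $\R_+^2$, we have $\mathbbm{1}_A = \mathbbm{1}_{\mathrm{int}(A)}$ a.e., and $\mathbbm{1}_{\mathrm{int}(A)}$ is lower semi-continuous, hence approximated from below by continuous $\eta_k\nearrow\mathbbm{1}_{\mathrm{int}(A)}$ compactly supported in $\mathrm{int}(A)$, with $0\leq\eta_k\leq\mathbbm{1}_A$. Under the identification $\eta_k(\rho_n,\mu_n)\to\eta_k(\rho,\mu)$ in $L^2$, the weak-strong product gives $\sqrt{\eta_k(\rho_n,\mu_n)}\,\nabla h(S_n)\rightharpoonup\sqrt{\eta_k(\rho,\mu)}\,\nabla h(S)$ in $L^2(\O)$, and weak $L^2$ lower semi-continuity together with $\eta_k\leq\mathbbm{1}_A$ yields
\begin{equation*}
\int\eta_k(\rho,\mu)|\nabla h(S)|^2 \leq \liminf_n\int\eta_k(\rho_n,\mu_n)|\nabla h(S_n)|^2 \leq \liminf_n\int_{(\rho_n,\mu_n)\in A}|\nabla h(S_n)|^2,
\end{equation*}
after which monotone convergence as $k\to\infty$ closes the argument. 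The main obstacle is precisely the identification $\eta_k(\rho_n,\mu_n)\to\eta_k(\rho,\mu)$: the hypotheses yield only weak convergence of $(\rho_n,\mu_n)$, and hence a.e.\ convergence of $S_n$ but not of the pair on $A$, so one must combine Proposition \ref{prop cv B} (which gives strong convergence for suitable cutoffs near $\partial A$ from the $B$-side) with the compactness of $S_n$ to cover $\eta_k$'s that genuinely depend on $(a,b)$ inside $A$ and cannot be reduced to a function of $s=a+b$ alone.
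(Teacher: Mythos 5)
The central step of your argument — reducing the statement to
\begin{equation*}
\int_{(\rho,\mu)\in A}|\nabla h(S)|^2 \;\leq\; \liminf_n \int_{(\rho_n,\mu_n)\in A}|\nabla h(S_n)|^2
\end{equation*}
— is not valid: this reduced inequality is false under the hypotheses. The pointwise bound $G_n(1-\chi(\rho_n,\mu_n))\geq |\nabla h(S_n)|^2\,\ind_{\{(\rho_n,\mu_n)\in A\}}$ is correct, but it discards exactly the dissipation that matters, namely the contribution of points where $(\rho_n,\mu_n)$ lies in $B$ close to $\partial A$ and only enters $A$ in the limit. Concretely, take on a portion $\O_1\subset\O$ the values $(\rho_n,\mu_n)(x)=(1-\delta_n)\bigl(\alpha(s(x)),\beta(s(x))\bigr)$ with $s$ smooth, nonconstant, ranging in $[3,4]$, and $\delta_n\to0$ (these points lie in $B$ because $\alpha$ is decreasing, so $(1-\delta)\alpha(s)<\alpha((1-\delta)s)$), and extend smoothly by $B$-valued, $n$-independent densities on $\O\setminus\O_1$ so as to keep unit masses. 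Then $f_a(\rho_n,\mu_n)$ and $f_b(\rho_n,\mu_n)$ are smooth uniformly in $n$, so $\Slo(\rho_n,\mu_n)$ stays bounded, while the limit takes values on $\partial A\subset A$ on $\O_1$; hence the left-hand side above equals $\int_{\O_1}(\tilde f'')^2(S)S|\nabla S|^2>0$, whereas your right-hand side vanishes for every $n$ since $\{(\rho_n,\mu_n)\in A\}=\emptyset$. This is precisely why the statement keeps the factor $1-\chi(\rho_n,\mu_n)$ (which equals $1$ on a full neighbourhood of $A$) rather than $\ind_A(\rho_n,\mu_n)$, and why the paper's proof never localizes at level $n$: it uses Lemma \ref{lem: ineq}, i.e. the bound $\rho_n|\nabla f_a|^2+\mu_n|\nabla f_b|^2\geq |\nabla(S_n+P_n)|^2/S_n$ valid on \emph{all} of $\O$ (with equality on $A$), obtains an $H^1$ bound on $\sqrt{S_n+P_n}$, identifies its weak limit via the a.e.\ convergence of $S_n$ and $P_n$, and only restricts to $\{(\rho,\mu)\in A\}$ at the limit. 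The same example shows that replacing $\ind_A$ by $\ind_{\mathrm{int}(A)}$ is not legitimate either: what must be negligible is $\{x:(\rho(x),\mu(x))\in\partial A\}\subset\O$, and this set can have full measure.

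Two further steps are also unsubstantiated. First, the slope bound does not yield a uniform $H^1(\O)$ bound on $\sqrt{S_n}$: on $B$ one has $\nabla S_n=\bigl(\rho_n(1-\mu_n)X_n+\mu_n(1-\rho_n)Y_n\bigr)/(1-\rho_n\mu_n)$ with $X_n,Y_n$ as in \eqref{def XY}, and the optimal transfer constant $\bigl(\rho_n(1-\mu_n)^2+\mu_n(1-\rho_n)^2\bigr)/\bigl(S_n(1-\rho_n\mu_n)^2\bigr)$ blows up for points of $B$ approaching the corner $(1,1)$ with $S_n>2$; the slope genuinely controls $\nabla(S_n+P_n)$, not $\nabla S_n$. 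This is why the paper only derives a bound with the degenerate weight $h(s)=\min\{|1-s^2/4|/(2\sqrt s),1/\sqrt s\}$ (enough for a.e.\ convergence of $S_n$, not for $H^1$ compactness of $\sqrt{S_n}$), and why the $L^2_tH^1_x$ bound on $\sqrt S$ is obtained elsewhere by the entropy/flow-interchange argument (Lemma \ref{lem:energy_estimate}) and appears as a separate hypothesis in Proposition \ref{prop:F_l.s.c._t}. Second, the identification $\eta_k(\rho_n,\mu_n)\to\eta_k(\rho,\mu)$ for cut-offs supported in $\mathrm{int}(A)$ and genuinely depending on both variables cannot be obtained: inside $A$ the slope only sees the sum, so oscillations of $(\rho_n,\mu_n)$ along the diagonals $a+b=\mathrm{const}$ are not excluded, and Proposition \ref{prop cv B} applies only to functions constant on $A$. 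You flagged this last point yourself, but the decisive flaw is the initial reduction, which cannot be repaired within this route; the auxiliary quantity $S+P$ of Lemma \ref{lem: ineq} (or something playing the same role across $\partial A$) is really needed.
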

	
	Let us define the function $S := \rho + \mu$ and consider the function $P(\rho, \mu)$ defined in \eqref{defn:Product_P}. We recall that (see Lemma \ref{lem:Properties_pi}), for $s>2$, we have
	
	$$
	\tilde f^{\prime\prime}(s) = \frac{1-\pi(s)}{s-2\pi(s)}.
	$$
	
	Remember that, if $(\rho,\mu)\in \mc H$, then the gradients of $\sqrt{S}$ and $ P(\rho,\mu)$ are well defined. We can then state the following lemma:
	\begin{lemma} \label{lem: ineq}
		Consider $(\rho,\mu) \in \mc H$. Then we have
		$$
		\frac{\vert \nabla(S + P(\rho,\mu))\vert^2}{S}\leq
		\vert \nabla f_a(\rho,\mu)\vert^2 \rho + \vert \nabla f_b(\rho,\mu)\vert^2 \mu.
		$$
	\end{lemma}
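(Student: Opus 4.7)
The plan is to split the domain of integration according to whether $(\rho(x),\mu(x))$ lies in $A$ or in $B$ and check the pointwise inequality on each piece. The meaning of $\nabla(S+P(\rho,\mu))$ is the natural piecewise one: on $\{(\rho,\mu)\in B\}$ we use $P(\rho,\mu)=\rho\mu$ and the chain rule with the gradients $\nabla\rho,\nabla\mu$ defined via the triangle functions of Section~\ref{sec l.s.c.}, while on $\{(\rho,\mu)\in A\}$ we use that $S+P(\rho,\mu)=S+\pi(S)$ is a Lipschitz function of $S=\rho+\mu$ alone, whose gradient is defined through the assumption $\sqrt S\in H^1(\Omega)$.

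On the region $\{(\rho,\mu)\in B\}$ we have $f=f_0$, so $f_a=\log a+1+b$ and $f_b=\log b+1+a$, which gives $\rho\nabla f_a=\nabla\rho+\rho\nabla\mu$ and $\mu\nabla f_b=\nabla\mu+\mu\nabla\rho$. Since $P=\rho\mu$ here, a direct computation yields
$$\nabla(S+P(\rho,\mu))=(1+\mu)\nabla\rho+(1+\rho)\nabla\mu=\rho\nabla f_a(\rho,\mu)+\mu\nabla f_b(\rho,\mu).$$
Then Cauchy--Schwarz applied to the decomposition $\rho\nabla f_a=\sqrt\rho\cdot\sqrt\rho\nabla f_a$ and $\mu\nabla f_b=\sqrt\mu\cdot\sqrt\mu\nabla f_b$ gives
$$|\nabla(S+P(\rho,\mu))|^2\leq(\rho+\mu)\bigl(\rho|\nabla f_a(\rho,\mu)|^2+\mu|\nabla f_b(\rho,\mu)|^2\bigr),$$
which is the desired inequality after dividing by $S$.

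On the region $\{(\rho,\mu)\in A\}$, $f=\tilde f(S)$ implies $f_a=f_b=\tilde f'(S)$, so $\nabla f_a(\rho,\mu)=\nabla f_b(\rho,\mu)=\tilde f''(S)\nabla S$ (by the chain rule for $\sqrt S\in H^1$), hence $\rho|\nabla f_a|^2+\mu|\nabla f_b|^2=S\,(\tilde f''(S))^2|\nabla S|^2$. On the other hand $S+P(\rho,\mu)=S+\pi(S)$, so $\nabla(S+P(\rho,\mu))=(1+\pi'(S))\nabla S$. The identity that makes the two regimes match is the purely algebraic
$$1+\pi'(s)=1-\frac{\pi(s)(s-2)}{s-2\pi(s)}=\frac{s(1-\pi(s))}{s-2\pi(s)}=s\,\tilde f''(s),$$
using Lemma~\ref{lem:Properties_pi}\,\ref{item2} and the formula for $\tilde f''$ established in Lemma~\ref{lem:tilde_f_convex}. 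Substituting, one gets $|\nabla(S+P)|^2/S=S\,(\tilde f''(S))^2|\nabla S|^2$, so the inequality is in fact an equality on $A$.

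The only subtle point is not the computations themselves but rather the justification that $\nabla(S+P(\rho,\mu))$ is well defined a.e.\ as an $L^2$ vector field under the hypothesis $(\rho,\mu)\in\mathcal H$, given that the function $(a,b)\mapsto a+b+P(a,b)$ is only Lipschitz (and not $C^1$) across the curve separating $A$ from $B$. This is handled by using on each of the two measurable sets $\{(\rho,\mu)\in B\}$ and $\{(\rho,\mu)\in A\}$ the appropriate chain rule already available from Section~\ref{sec l.s.c.}: on $B$ via the triangle-based definition of $\nabla\rho,\nabla\mu$ (and truncations of $S+P$ by Lipschitz functions supported in $B$), and on $A$ via $\sqrt S\in H^1$. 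Once this pointwise meaning is fixed, the two estimates above combine to give the stated inequality on all of $\Omega$.
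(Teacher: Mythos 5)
Your proposal is correct and follows essentially the same route as the paper: split according to $\{(\rho,\mu)\in B\}$ and $\{(\rho,\mu)\in A\}$, note that on $B$ one has $\nabla(S+P)=\rho\nabla f_a+\mu\nabla f_b$ and apply Cauchy--Schwarz (the paper phrases this as convexity of the squared norm with weights $\rho/S$, $\mu/S$, which is the same estimate), and on $A$ verify via $\pi'=-\pi(s-2)/(s-2\pi)$ and $\tilde f''=(1-\pi)/(s-2\pi)$ that $1+\pi'(S)=S\tilde f''(S)$, so the inequality is an equality there. Your extra remarks on the a.e.\ meaning of $\nabla(S+P)$ are consistent with the paper, which relies on the same observations made just before the lemma.
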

	
	\begin{proof}
		We consider the cases where $(\rho,\mu)\in A$ and $(\rho,\mu)\in B$ separately. The inequality is actually an equality in the set $A$.
		
		We start with the set $B$. We have, by convexity, for $(\rho,\mu)\in B$,
		\begin{align*}
			\left (\vert \nabla f_a(\rho,\mu)\vert^2 \rho + \vert \nabla f_b(\rho,\mu)\vert^2 \mu\right) &=
			\bigg(\left|\frac{\nabla \rho}{\rho} + \nabla \mu \right|^2\rho + \left|\frac{\nabla \mu}{\mu} + \nabla \rho \right|^2 \mu\bigg) \\ 
			&\geq (\rho+\mu)\left|\frac{\rho}{\rho+\mu} \bigg (\frac{\nabla \rho}{\rho} + \nabla \mu \bigg ) + \frac{\mu}{\rho+\mu}  \bigg (\frac{\nabla \mu}{\mu} + \nabla \rho \bigg )  \right|^2\\
			&= \frac{1}{\rho + \mu}\left| \nabla(\rho +\mu) + \nabla (\rho \mu)  \right|^2 \\ &= \frac{\vert \nabla(S + P(\rho,\mu))\vert^2}{S}.
		\end{align*}
		Now, let us consider the set $A$. For  $(\rho,\mu)\in A$ we have 
		$$\vert \nabla f_a(\rho,\mu)\vert^2 \rho + \vert \nabla f_b(\rho,\mu)\vert^2 \mu= (\tilde f^{\prime\prime})^2(S) \vert\nabla S\vert^2S.
		$$
		We also have
		$$
		\nabla (S + \pi(S))= \nabla S (1 + \pi^\prime(S)).
		$$
		Owing to the relation $\pi^\prime = -\pi \frac{(s-2)}{s-2\pi}$, we obtain
		$$
		\nabla (S + \pi(S)) = \nabla S \frac{S-2\pi(S)-S\pi(S)+2\pi(S)}{S-2\pi(S)}= S \tilde f^{\prime \prime}(S)\nabla S.$$
		This proves that the desired inequality is actually an equality when $(\rho,\mu ) \in A$.
	\end{proof}

	Lemma \ref{lem: ineq} implies that, if $(\rho_n,\mu_n) \in \mc H$ is such that $\Slo(\rho_n,\mu_n)$ is bounded independently of $n$, then so is the $H^1$ norm of 
	$$
	\sqrt{S(\rho_n,\mu_n) + P(\rho_n,\mu_n)}\in H^1(\O).
	$$
	
	\begin{prop}
		Let $(\rho_n,\mu_n) \in \mc H$ be such that $\Slo(\rho_n,\mu_n)$ is bounded, and suppose that  $\rho_n\deb\rho$ and $\mu_n\deb\mu$ as $n \to +\infty$. Let $S_n := \rho_n+\mu_n$ and let $P_n := P(\rho_n,\mu_n)$. Then, we have the following convergence results:
		\begin{itemize}
			\item $S_n \to \rho +\mu$ a.e.
			\item $P_n \to P(\rho,\mu)$ a.e.
		\end{itemize}
	\end{prop}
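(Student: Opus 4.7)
The plan is to use Lemma \ref{lem: ineq} as the main compactness tool, and then separate the limits by exploiting the different regimes of $f$.

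First, I would integrate the pointwise inequality of Lemma \ref{lem: ineq} over $\Omega$ to get
$$\int_\Omega \frac{|\nabla(S_n+P_n)|^2}{S_n} \leq \Slo(\rho_n,\mu_n)\leq C.$$
Since $P$ vanishes on the coordinate axes, we have $\{S_n=0\}\subset\{P_n=0\}$ and $\nabla(S_n+P_n)=0$ a.e. on that set, so the bound $|\nabla\sqrt{S_n+P_n}|^2\leq |\nabla(S_n+P_n)|^2/(4S_n)$ is legitimate. Combined with the $L^2$ bound coming from $\int(S_n+P_n) \leq 2 + |\Omega|$ (using $P(a,b)\leq 1$ since $ab<1$ on $B$ and $\pi\leq 1$ on $A$), this shows that $\sqrt{S_n+P_n}$ is bounded in $H^1(\Omega)$.

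By Rellich-Kondrachov compactness, up to a subsequence, $\sqrt{S_n+P_n}\to w$ strongly in $L^2$ and a.e. Consequently $S_n+P_n\to L:=w^2$ a.e., and also strongly in $L^1$ since $\|S_n+P_n-L\|_{L^1} \leq \|\sqrt{S_n+P_n}+w\|_{L^2}\|\sqrt{S_n+P_n}-w\|_{L^2}\to 0$. In particular, $\{S_n\}$ is uniformly integrable (being dominated by $S_n+P_n$, strongly convergent in $L^1$).

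Next, on the set $\{(\rho,\mu)\in B\}$, I would use Proposition \ref{prop cv B}: for any Lipschitz $\chi$ constant on $A$ and supported in $B$, as well as for $a\chi(a,b)$ and $b\chi(a,b)$, the corresponding compositions with $(\rho_n,\mu_n)$ converge strongly in $L^2$ (hence a.e. along a subsequence). A diagonal extraction over a countable family of such $\chi$'s localized around points of the open set $B$ then yields the following: for a.e.\ $x$ with $(\rho(x),\mu(x))\in B$, choosing $\chi\equiv 1$ near $(\rho(x),\mu(x))$ gives $\chi(\rho_n,\mu_n)(x)\to 1$ and $\rho_n\chi(\rho_n,\mu_n)(x)\to \rho(x)$, hence $\rho_n(x)\to\rho(x)$ and similarly $\mu_n(x)\to\mu(x)$. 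Therefore $S_n\to S$ and $P_n\to P(\rho,\mu)$ a.e. on $\{(\rho,\mu)\in B\}$.

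The remaining region $\{(\rho,\mu)\in A\}$ is the main obstacle. Here I use the pointwise inequality $H(a,b):=a+b+P(a,b)\leq \tilde\Psi(a+b)$, valid on all of $\R_+^2$ with equality precisely when $(a,b)\in A$ (where $\tilde\Psi$ is the continuous strictly-increasing extension of $\Psi=\mathrm{id}+\pi$ to $[0,\infty)$, set to be $s+s^2/4$ for $s<2$). This gives $\tilde\Psi^{-1}(S_n+P_n)\leq S_n\leq S_n+P_n$, and letting $n\to\infty$ yields $\tilde\Psi^{-1}(L)\leq\liminf S_n\leq\limsup S_n\leq L$ a.e. Combined with the a.e.\ convergence $S_n\to S$ already established on $\{(\rho,\mu)\in B\}$, the mass conservation $\int S_n = 2 = \int S$, and the uniform integrability of $S_n$, a Fatou-type book-keeping argument forces $\liminf S_n = \limsup S_n = S$ a.e.\ on $\{(\rho,\mu)\in A\}$; any a.e.\ subsequential limit $\tilde S$ of $S_n$ is in $L^1$ with $\int \tilde S \leq 2 = \int S$ and, by uniform integrability and uniqueness of the weak limit, must equal $S$. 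The claim for $P_n$ then follows by writing $P_n=(S_n+P_n)-S_n\to L-S$ a.e., which equals $\pi(S)=P(\rho,\mu)$ on the $A$-region (by identifying $L=\Psi(S)$ there from the squeeze above) and equals $P(\rho,\mu)$ on the $B$-region by Stage 3.
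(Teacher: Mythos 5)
Your first three stages are essentially fine and partly overlap with what the paper does: the $H^1$ bound on $\sqrt{S_n+P_n}$ obtained by integrating Lemma \ref{lem: ineq} is exactly the observation the paper makes right after that lemma, and your localization argument via Proposition \ref{prop cv B} (applied to cut-offs $\chi$, $a\chi$, $b\chi$ supported in $B$, plus a diagonal extraction) is a legitimate, if different, way to get a.e.\ convergence of $\rho_n,\mu_n$, hence of $S_n$ and $P_n$, on the set $\{(\rho,\mu)\in B\}$ along a subsequence. The uniform integrability of $S_n$ (domination by $S_n+P_n$, which converges strongly in $L^1$) is also correct.

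The gap is the treatment of the region $\{(\rho,\mu)\in A\}$. The squeeze only yields $\tilde\Psi^{-1}(L)\le\liminf S_n\le\limsup S_n\le L$ a.e., and the concluding ``Fatou-type book-keeping'' (mass conservation, uniform integrability, uniqueness of the weak limit) cannot upgrade this to a.e.\ convergence: weak-$L^1$ information is perfectly compatible with persistent pointwise oscillation of $S_n$ between the two bounds, and an $L^1$-bounded sequence need not admit any a.e.-convergent subsequence, so the sentence ``any a.e.\ subsequential limit $\tilde S$ of $S_n$ \dots must equal $S$'' does not carry the argument. Concretely, the value of $S_n+P_n$ does not determine $S_n$: the same value $L$ is attained by points on the curve $\partial A$ (where $S_n=\tilde\Psi^{-1}(L)$ and $P_n=\pi(S_n)$) and by points of $B$ close to the coordinate axes (where $P_n\approx 0$ and $S_n\approx L$), so a sequence oscillating in $x$ between these two regimes satisfies every constraint you have collected (a.e.\ and $L^1$ convergence of $S_n+P_n$, weak convergence of $S_n$, uniform integrability, unit masses) while $S_n$ converges nowhere on that set; only the slope bound excludes it, and it does so through the control it gives on $\nabla S_n$ itself, which you never use (Lemma \ref{lem: ineq} only exploits $\nabla(S_n+P_n)$). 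This is precisely the paper's route: from the slope one bounds $\nabla S_n$ on $\{(\rho_n,\mu_n)\in B\}$ via the $X_n,Y_n$ decomposition (with a weight degenerating only at $S_n=2$) and on $\{(\rho_n,\mu_n)\in A\}$ via $\tilde f''\ge r_0/s$, obtaining a uniform $H^1$ bound on $H(S_n)$ for a strictly increasing primitive $H$ of $h(s)=\min\{\vert 1-s^2/4\vert/(2\sqrt s),\,1/\sqrt s\}$; a.e.\ compactness of $S_n$ then follows by inverting $H$. Two minor further points: the claimed ``equality precisely on $A$'' in $a+b+P(a,b)\le\tilde\Psi(a+b)$ fails on the diagonal $\{a=b\le1\}\subset B$ (harmless, but symptomatic of the fact that this inequality cannot separate the two regimes); and once a.e.\ convergence of $S_n$ is available, the paper's identification $P_n=\pi(S_n)+(P_n-\pi(S_n))$, using Proposition \ref{prop cv B} for the second term, is cleaner than writing $P_n=(S_n+P_n)-S_n$, which would additionally require knowing $L=\Psi(S)$ on the $A$-region.
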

	
	\begin{proof}
		We start by giving some bounds on $\nabla S_n$. Recalling the definitions of $X_n,Y_n$ from \eqref{def XY}, we  have (on the set  $\{(\rho_n,\mu_n)\in B\}$ of points $x\in\Omega$ where  $(\rho_n(x),\mu_n(x))\in B$),
		$$
		\nabla \rho_n = \frac{\rho_n X_n - \rho_n\mu_n Y_n}{1-\rho_n \mu_n}, \quad \nabla \mu_n = \frac{\mu_n Y_n - \rho_n\mu_n X_n}{1-\rho_n \mu_n}.
		$$
		Therefore, on $\{(\rho_n,\mu_n)\in B\}$, we obtain
		\begin{multline*}
			\vert \nabla S_n \vert^2 \leq 4 \left(\frac{\rho_n^2 \vert X_n\vert ^2 + \rho_n^2\mu_n^2 \vert Y_n\vert^2 + \mu_n^2 \vert Y_n\vert ^2 + \rho_n^2\mu_n^2 \vert X_n\vert^2}{(1-\rho_n \mu_n)^2} \right)\\
			= 4 \left(\frac{(\rho_n +(\rho_n\mu_n) \mu_n) \rho_n\vert X_n\vert ^2 +  (\mu_n +(\rho_n \mu_n) \rho_n)\mu_n \vert Y_n\vert ^2 }{(1-\rho_n \mu_n)^2} \right).
		\end{multline*}
		Since $\rho_n\mu_n\leq 1$ in the set $B$, we get
		$$
		\vert \nabla S_n \vert^2 \leq 
		4 \left(\frac{S_n \rho_n\vert X_n\vert ^2 + S_n\mu_n \vert Y_n\vert ^2 }{(1-\rho_n \mu_n)^2} \right).
		$$
		Now, using $(1-\rho_n\mu_n) \geq (1 - \frac{S_n^2}{4})$, we finally get, for $(\rho_n,\mu_n)\in B$,
		$$
		\vert \nabla S_n \vert^2 \frac{(1 - \frac{S_n^2}{4})^2}{4 S_n} \leq 
		\rho_n\vert X_n\vert ^2 + \mu_n \vert Y_n\vert ^2 .
		$$
		For $(\rho_n,\mu_n)\in A$, we use $f^{\prime\prime}\geq \frac{r_0}{s}$, where $r_0$ as in Remark \ref{rem:r_0}, and we obtain
		$$
		\int_{(\rho_n,\mu_n)\in A}\frac{1}{S_n} \vert \nabla S_n\vert^2\leq C,
		$$
		for a constant $C$ independent of $n$. Therefore, taking 
		$$
		h(s) := \min \left \{  \frac{\vert 1 - \frac{s^2}{4}\vert}{2 \sqrt{s}} , \frac{1}{\sqrt{s}} \right \},
		$$
		we find that
		$$
		\int_\O h^2(S_n) \vert \nabla S_n\vert^2
		$$
		is bounded independently of $n$. The function $h$ is positive and vanishes only at $s=2$. If we denote by $H$ any anti-derivative of $h$, characterized by $H'=h$, we deduce that $H$ is strictly increasing. Moreover, we obtain a uniform $H^1$ bound on 
		$
		H(S_n)$. 
		This implies that, up to a subsequence, $H(S_n)$ has an a.e. limit. Composing with $H^{-1}$, the same is true for $S_n$. The pointwise limit of $S_n$ can only coincide with its weak limit, i.e. $S$.

		Now, to prove the convergence of $P(\rho_n,\mu_n)$, it is sufficient to write
		$$
		P(\rho_n,\mu_n) = \pi(\rho_n+\mu_n) + \left(P(\rho_n,\mu_n) - \pi(\rho_n
		+\mu_n)  \right)$$
		for any extension of $\pi$ to $\R_+$ ($\pi$ is originally only defined on $[2,+\infty)$, but we can take $\pi=1$ on $[0,2]$).
		
		The function $P(a,b) - \pi(a
		+b)$ is constant, equal to zero, on the set $A$. Then, we can apply Proposition \ref{prop cv B} to obtain the desired convergence. In what concerns 
		$\pi(\rho_n+\mu_n)$ we just need to apply what we just proved on $S_n$.
	\end{proof}
	
	We are now in the position to prove Proposition \ref{th l.s.c. 1}.
	\begin{proof}[Proof of Proposition \ref{th l.s.c. 1}]
		Consider a sequence as in the statement of the proposition. Then, we have
		\begin{align*}
			\int_\O (\vert \nabla f_a(\rho_n,\mu_n)\vert^2 \rho_n + \vert \nabla f_b(\rho_n,\mu_n)\vert^2 \mu_n)(1-\chi(\rho_n,\mu_n)) &\geq \int_\O \frac{\vert \nabla (S_n + P_n)\vert^2 }{S_n}(1-\chi(\rho_n,\mu_n))\\
			&= 4\int_\O \vert \nabla \sqrt{S_n + P_n} \vert^2  \frac{S_n+P_n}{S_n}(1-\chi(\rho_n,\mu_n)).
		\end{align*}
		Since $\sqrt{S_n+P_n}$ is bounded in $H^1$, it converges weakly in $H^1$ up to extraction of a subsequence, and we know that its limit has to be $\sqrt{S+P}$. Using the standard semi-continuity argument we obtain
		\begin{align*}
			\liminf_{n\to+\infty}\int_\O (\vert \nabla f_a(\rho_n,\mu_n)\vert^2 \rho_n + \vert \nabla f_b(\rho_n,\mu_n)\vert^2 \mu_n)(1-\chi(\rho_n,\mu_n)) \geq \int_\O \frac{\vert \nabla (S + P)\vert^2 }{S}(1-\chi(\rho,\mu))\\
			\geq \int_{(\rho,\mu)\in A} (\vert \nabla f_a(\rho,\mu)\vert^2 \rho + \vert \nabla f_b(\rho,\mu)\vert^2 \mu).
		\end{align*}
	\end{proof}
	Therefore, Theorem \ref{th l.s.c.}, that is, the lower semi-continuity of the functional $\Slo$, follows from the previous results as an easy consequence:
	
	\begin{proof}[Proof of Theorem \ref{th l.s.c.}]
		Let $(\rho_n, \mu_n)$ be a sequence satisfying the hypotheses of Theorem \ref{th l.s.c.}. Let $0\leq \chi\leq 1$ be a Lipschitz function compactly supported in $B$. Then
		\begin{align*}
			\Slo(\rho_n,\mu_n) = &\int_\O (\vert \nabla f_a(\rho_n,\mu_n) \vert^2 \rho_n + \vert \nabla f_b(\rho_n,\mu_n)\vert^2 \mu_n )  
			\\= &\int_\O (\vert \nabla f_a(\rho_n,\mu_n) \vert^2 \rho_n + \vert \nabla f_b(\rho_n,\mu_n)\vert^2 \mu_n )\chi(\rho_n,\mu_n) 
			\\ &+ \int_\O (\vert \nabla f_a(\rho_n,\mu_n) \vert^2 \rho_n + \vert \nabla f_b(\rho_n,\mu_n)\vert^2 \mu_n )(1-\chi(\rho_n,\mu_n)).
		\end{align*}
		Applying separately the results of Propositions \ref{th l.s.c. B} and \ref{th l.s.c. 1}, we obtain
		
		$$\liminf_{n\to+\infty} \Slo(\rho_n,\mu_n) \geq \int_\O (\vert \nabla f_a(\rho,\mu) \vert^2 \rho + \vert \nabla f_b(\rho,\mu)\vert^2 \mu )\big(\ind_A(\rho,\mu)+\chi(\rho,\mu) \big).$$
		
		Since $\chi$ is arbitrary, we can select an increasing sequence of cut-off functions $\chi_k$ converging to $\ind_B$, and by monotone convergence we obtain 
		$$\liminf_{n\to+\infty} \Slo(\rho_n,\mu_n) \geq \int_\O (\vert \nabla f_a(\rho,\mu) \vert^2 \rho + \vert \nabla f_b(\rho,\mu)\vert^2 \mu )\big(\ind_A(\rho,\mu)+\ind_B(\rho,\mu) \big)=\Slo(\rho,\mu),$$
		and this concludes the proof.
	\end{proof}
	
	\section{Existence of solutions in the EDI sense}
	\label{sec:EDI}
	
	In this section, we define the JKO scheme for the functional $F$ and three different interpolations (De Giorgi variational, piecewise geodesic, and piecewise constant) for this scheme. We prove that these interpolations converge to the same limit curve and this limit curve is a gradient flow for the functional $F$ in a suitable sense (see Def. \ref{defn:EDI} below). More precisely, we show that the \emph{Energy Dissipation Inequality} \eqref{EDI} holds by using the estimates we obtain via the three interpolations we define subsequently.
	
	First, we define the space $L^2\mathcal H$ as follows: 
	\begin{definition} \label{defn:L^2H}
		The space $L^2\mathcal H$ is composed of all the curves $\rho,\mu:[0,T]\to \mc P (\O)$ such that
		\begin{enumerate} [label=\roman*.]
			\item \label{L^2H: prop1} Both $\rho$ and $\mu$ belong to $AC_2([0,T];W_2(\O))$ (see \cite{AGS08});
			\item \label{L^2H: prop2}For a.e. $t\in [0,T]$ we have $(\rho_t,\mu_t)\in \mc H$;
			\item \label{L^2H: prop3}For every $\eta \in W^{1,\infty}_c(B),$ we have $\int_0^T \|\eta(\rho_t,\mu_t) \|_{H^1(\O)}^2 \d t<+\infty$;
			\item \label{L^2H: prop4} We also have $\int_0^T \| \sqrt{\rho_t+\mu_t} \|_{H^1(\O)}^2 \d t<+\infty$.
		\end{enumerate}
	\end{definition}
	Next we give the following definition:
	\begin{definition}\label{defn:EDI} A pair of  curves $(\rho_t, \mu_t)_{t\in [0,T]} \in L^2\mathcal H$ is \emph{a gradient flow in the EDI sense} for the functional  $F$, given by \eqref{defn:F}, with initial datum $(\rho_0,\mu_0) \in \mc P(\Omega)\times \mc P (\O)$ and $F(\rho_0, \mu_0) < +\infty$, if and only if
		\begin{itemize}
			
			\item There exist a pair of velocities $(v_t, w_t)$, associated with the curves $(\rho_t, \mu_t)$, that satisfy the continuity equations $\p_t \rho_t +\nabla \cdot (\rho_tv_t ) =0$ and  $\p_t\mu_t +\nabla \cdot (\mu_t w_t) =0$ with no-flux boundary conditions.
			\item The pairs $(\rho_t,\mu_t)$ and $(v_t, w_t)$ satisfy the \emph{Energy Dissipation Inequality}
			\begin{align} \label{EDI}
				F(\rho_T, \mu_T) + \frac{1}{2}\int_{0}^{T} \int_\Omega \rho_t |v_t|^2 \d x \d t + \frac{1}{2}\int_{0}^{T} \int_\Omega \mu_t |w_t|^2   \d x \d t + \frac{1}{2}\int_{0}^{T} \Slo(\rho_t, \mu_t) \d t\leq F(\rho_0, \mu_0).
			\end{align}
		\end{itemize}
	\end{definition}
	The main goal of this section is to prove the following theorem:
	\begin{theorem}\label{th:existence}
		For any initial datum $(\rho_0,\mu_0)$ such that  $F(\rho_0, \mu_0) < +\infty$, there exists an EDI gradient flow for the functional $F$.
	\end{theorem}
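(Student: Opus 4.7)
The plan is to carry out the strategy sketched in the introduction: run the JKO scheme on the relaxed functional $F$, build three interpolations of the discrete minimizers, derive a discrete Energy Dissipation Inequality whose three dissipative pieces are each controlled by a different interpolation, and finally pass to the limit $\tau\to 0$ using the lower semi-continuity of $\Slo$ proved in Theorem \ref{th l.s.c.} together with standard lower semi-continuity of the kinetic energy along the continuity equation.

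Concretely, I first fix $\tau>0$ and define inductively
\[ (\rho^\tau_{k+1},\mu^\tau_{k+1}) \in \argmin_{(\rho,\mu)} \Bigl\{F(\rho,\mu) + \frac{W_2^2(\rho,\rho^\tau_k)+W_2^2(\mu,\mu^\tau_k)}{2\tau}\Bigr\}. \]
Existence at every step is immediate since $F$ is lower semi-continuous for the weak convergence (by Proposition \ref{prop relax} combined with standard relaxation results for local functionals with convex, superlinear integrand) and $W_2^2$ is $W_2$-continuous. The standard telescoping argument yields the monotone decrease of $F$ and the discrete kinetic bound
\[ \sum_k \frac{W_2^2(\rho^\tau_{k+1},\rho^\tau_k)+W_2^2(\mu^\tau_{k+1},\mu^\tau_k)}{\tau} \leq 2\bigl(F(\rho_0,\mu_0)-\inf F\bigr), \]
which gives a uniform $1/2$-Hölder estimate in $W_2$ for any piecewise interpolation and hence compactness in $C([0,T];W_2(\O)\times W_2(\O))$. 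I then introduce the piecewise Wasserstein-geodesic interpolation $(\tilde\rho^\tau,\tilde\mu^\tau)$ with its optimal velocities $(\tilde v^\tau,\tilde w^\tau)$, which carries the kinetic energy, and the De Giorgi variational interpolation $(\hat\rho^\tau,\hat\mu^\tau)$, defined on $(k\tau,(k+1)\tau]$ by minimizing the same JKO problem with the penalty $1/(2(t-k\tau))$ replacing $1/(2\tau)$. Combining the one-step De Giorgi identity with the first-variation analysis of the JKO problem (which identifies the Kantorovich potentials with $-\tau\nabla f_a(\hat\rho^\tau,\hat\mu^\tau)$ and $-\tau\nabla f_b(\hat\rho^\tau,\hat\mu^\tau)$ in the sense of Section \ref{sec l.s.c.}) produces the approximate EDI
\[ F(\rho^\tau_{N\tau},\mu^\tau_{N\tau}) + \frac12 \int_0^{N\tau}\!\!\int_\O \bigl(\tilde\rho^\tau|\tilde v^\tau|^2+\tilde\mu^\tau|\tilde w^\tau|^2\bigr) + \frac12 \int_0^{N\tau} \Slo(\hat\rho^\tau_t,\hat\mu^\tau_t)\d t \leq F(\rho_0,\mu_0). \]

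The limit $\tau\to 0$ is then standard. An Ascoli argument in $W_2\times W_2$ extracts a subsequence for which all three interpolations converge to the same $AC_2$ curve $(\rho,\mu)$ (their pairwise $W_2$ distances are $O(\sqrt\tau)$ at every time), and $(\tilde\rho^\tau\tilde v^\tau,\tilde\mu^\tau\tilde w^\tau)$ has a weak limit $(\rho v,\mu w)$ solving the two continuity equations with no-flux boundary conditions. Passing to the liminf in each of the three dissipative terms: $F$ at the final time is lower semi-continuous; the kinetic energy is jointly lower semi-continuous along continuity equations (Theorem 5.18 in \cite{S17}); for the slope I apply Theorem \ref{th l.s.c.} at almost every $t\in [0,T]$ together with Fatou's lemma, which requires only the weak convergence $(\hat\rho^\tau_t,\hat\mu^\tau_t)\deb (\rho_t,\mu_t)$. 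The inclusion $(\rho,\mu)\in L^2\mc H$ is automatic: the uniform integral bound on $\Slo(\hat\rho^\tau,\hat\mu^\tau)$ together with the chain rules of Section \ref{sec l.s.c.} controls $\int_0^T\|\eta(\hat\rho^\tau,\hat\mu^\tau)\|_{H^1}^2$ for every $\eta\in W_c^{1,\infty}(B)$ and $\int_0^T\|\sqrt{\hat\rho^\tau+\hat\mu^\tau}\|_{H^1}^2$, and both bounds transfer to the weak limit by semi-continuity.

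The main obstacle is the first-variation step that places the exact functional $\Slo$, rather than a surrogate metric slope, inside the discrete EDI. Since $f$ is only $C^1$ and its derivatives $f_a,f_b$ are merely continuous on $\R_+^2$, the Euler-Lagrange equation of the JKO minimization cannot be obtained by a naive smooth computation. I would handle the variation in two classes of test perturbations: those supported in a compact subset of the strictly convex region $B$ (where the chain rule of Section \ref{sec l.s.c.} makes the computation rigorous and recovers the gradients $\nabla f_a,\nabla f_b$ of Definition \ref{def slope}), and perturbations that only modify the sum $\rho+\mu$ in $A$ (where only $\tilde f'(\rho+\mu)$ enters, which is $C^1$ by Lemma \ref{lem:tilde_f_convex}). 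Combining both families of variations simultaneously forces $(\hat\rho^\tau_t,\hat\mu^\tau_t)\in\mc H$ and matches the JKO displacement with the integrand of $\Slo$ in the expected way, which is precisely the link that makes the limiting inequality \eqref{EDI} meaningful and turns the three weak limits into an EDI gradient flow in the sense of Definition \ref{defn:EDI}.
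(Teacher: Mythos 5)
Your skeleton (JKO scheme, geodesic and De Giorgi interpolations, the one-step De Giorgi improvement giving a discrete EDI with $\Slo(\hat\rho^\tau,\hat\mu^\tau)$, then Ascoli compactness, Benamou--Brenier lower semi-continuity for the kinetic terms, and Theorem \ref{th l.s.c.} plus Fatou for the slope) is exactly the paper's strategy. The gap is in the sentence ``the inclusion $(\rho,\mu)\in L^2\mathcal H$ is automatic: the uniform integral bound on $\Slo(\hat\rho^\tau,\hat\mu^\tau)$ \dots controls \dots $\int_0^T\|\sqrt{\hat\rho^\tau+\hat\mu^\tau}\|_{H^1}^2$''. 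This is false: the quadratic form defining $\Slo$ does \emph{not} dominate $|\nabla S|^2/S$ with a uniform constant, because it degenerates near the point $(1,1)$ inside $B$. Concretely, at $(\rho,\mu)=(1+u,\,1-u-u^2)\in B$ and $\nabla\rho=-\rho\,\nabla\mu$, the slope integrand is of order $u^4|\nabla\mu|^2$ while $|\nabla S|^2/S$ is of order $u^2|\nabla\mu|^2$, so the ratio tends to $0$ as $u\to0$. What the slope bound does give is only Lemma \ref{lem: ineq} (an $H^1$ bound on $\sqrt{S+P}$) and, in the proof of the a.e.-convergence proposition, a bound on $\nabla H(S)$ for an $H$ whose derivative vanishes at $S=2$ --- neither of which yields Property \ref{H:prop2} of Definition \ref{defn: H}, nor the hypothesis $\int_0^T\|\sqrt{\rho_t+\mu_t}\|_{H^1}^2\,\d t<+\infty$ on the \emph{limit} curve that Proposition \ref{prop:F_l.s.c._t} and Theorem \ref{th l.s.c.} require (both need the limit pair to lie in $\mc H$, which is an assumption there, not a consequence of the slope bound).

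The missing ingredient is the paper's flow-interchange argument with the auxiliary entropy $G(\rho,\mu)=\int\rho\log\rho+\mu\log\mu$: the pointwise inequality $\nabla\rho\cdot\nabla f_a+\nabla\mu\cdot\nabla f_b\geq r_0\,|\nabla S|^2/S$ (Lemma \ref{lem S^2 over S bound}, using the constant $r_0$ of Remark \ref{rem:r_0}), combined with a regularization $F_\ve=F+\ve G$ and elliptic regularity to justify the computation along one JKO step, gives $G(\rho_k^\tau,\mu_k^\tau)-G(\rho_{k+1}^\tau,\mu_{k+1}^\tau)\geq r_0\tau\int|\nabla S_{k+1}^\tau|^2/S_{k+1}^\tau$ (Lemma \ref{lem:energy_estimate}). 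This single estimate is what supplies Property \ref{H:prop2} for the De Giorgi interpolation (Lemma \ref{lem:de_Giorgi_H^1}, applied with $s\tau$) and, after telescoping over $k$ and passing to the limit, the bound $\sqrt{S}\in L^2_tH^1_x$ for the limit curve (Lemma \ref{lem:sqrt_S_L2H1}); without it the limiting slope term in \eqref{EDI} cannot be obtained. Relatedly, your claim that the two families of first variations ``force $(\hat\rho_t^\tau,\hat\mu_t^\tau)\in\mc H$'' only covers Property \ref{H:prop1}: the Lipschitz optimality conditions give $\nabla\rho/\rho+\nabla\mu$ and $\nabla\mu/\mu+\nabla\rho$ bounded, but inverting this system to control $\nabla S$ again loses a factor $1-\rho\mu$, which degenerates at the same point $(1,1)$, so even the fixed-$\tau$ membership of $\sqrt{S}$ in $H^1$ needs the entropy estimate rather than the Euler--Lagrange equation.
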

	
	\subsection{The JKO scheme and the interpolations}
	
	\begin{definition}[JKO scheme for $F$] \label{defn:JKO_scheme}
		For a fixed time step $\tau >0$ (of the form $\tau=T/N$ for some $N\in \N$), we define the JKO scheme as a sequence of probability measures $(\rho_k^{\tau}, \mu_k^{\tau})_k$, with a given initial datum $(\rho_0^\tau,\mu_0^\tau) = (\rho_0,\mu_0)$ such that for $k \in \{0, \cdots, N-1\}$ we have
		\begin{align} \label{defn:JKO}
			(\rho_{k+1}^{\tau}, \mu_{k+1}^{\tau}) \in \argmin_{(\rho, \mu )}  F (\rho, \mu) + \frac{1}{2 \tau} W_2^2 (\rho, \rho_k^{\tau})+ \frac{1}{2 \tau}  W_2^2 (\mu, \mu _k^{\tau}).
		\end{align} 
	\end{definition}
	This sequence of minimizers exists since the functional $F$ is lower semi-continuous for the weak convergence and so is the sum we minimize. Such a sum is also strictly convex, since the measures $\rho_k^{\tau}, \mu_k^{\tau}$ are necessarily absolutely continuous (which implies strict convexity of the Wasserstein terms, see Prop. 7.19 in \cite{S15}): the minimizer is then unique at every step.
	
	Notice that \eqref{defn:JKO} implies that, at each iteration, we have
	\begin{align} \label{ineq:JKO}
		F (\rho_{k+1}^\tau, \mu_{k+1}^\tau ) + \frac{1}{2\tau } W_2^2 ( \rho_{k+1}^\tau, \rho_k^\tau) + \frac{1}{2\tau } W_2^2 ( \mu_{k+1}^\tau, \mu_k^\tau)  \leq 	F (\rho_{k}^\tau, \mu_{k}^\tau ). 
	\end{align}
	
	An important consequence of \eqref{ineq:JKO} is the following inequality, which is obtained by summing over $k$ and using the fact that $F$ is bounded from below:
	\begin{equation}\label{basicH1estimate}
		\frac{1}{2\tau } W_2^2 ( \rho_{k+1}^\tau, \rho_k^\tau) + \frac{1}{2\tau } W_2^2 ( \mu_{k+1}^\tau, \mu_k^\tau)  \leq C:=	F (\rho_0, \mu_0 )-\inf F. 
	\end{equation}

	Our strategy to prove that Inequality \eqref{EDI} holds is to improve \eqref{ineq:JKO} by the help of some interpolations for the sequence $(\rho_k^{\tau}, \mu_k^{\tau})_k$ for the functional $F$. Therefore, we define these interpolations next.
	
	\begin{definition}[De Giorgi variational interpolation] \label{defn:de_Giorgi_interpolation}
		We define the De Giorgi variational interpolation $(\hat \rho_t^\tau, \hat \mu_t^\tau)$ for the sequence $(\rho_k^{\tau}, \mu_k^{\tau})_k$ as follows: for any $s \in (0,1]$ and any $k$, take $ t = (k+s) \tau$ such that 
		\begin{align} \label{interpol_deGiorgi}
			(\hat \rho_{k+s}^{\tau}, \hat \mu_{k+s}^{\tau}) \in \argmin_{(\rho, \mu )}    F (\rho, \mu) + \frac{1}{2 \tau s} W_2^2 (\rho, \rho_k^{\tau})+ \frac{1}{2 \tau s}  W_2^2 (\mu, \mu _k^{\tau}).
		\end{align} 
	\end{definition}
	We notice that when $s=1$, \eqref{interpol_deGiorgi} is nothing but the JKO scheme \eqref{defn:JKO}. The main point (which is now a classical idea in the study of gradient flows, see \cite{AGS08}), is that we can improve \eqref{ineq:JKO} by 
	\begin{multline} \label{eq:de Giorgi1}
		F (\rho_{k+1}^{\tau}, \mu_{k+1}^{\tau})  + \frac{1}{2 \tau} W_2^2 (\rho_{k+1}^{\tau}, \rho_k^{\tau})+ \frac{1}{2 \tau}  W_2^2 (\mu_{k+1}^{\tau}, \mu _k^{\tau})  \\+   \int_{0}^{1}  \frac{W_2^2 ( \hat  \rho_{k+s}^{\tau}, \rho_k^{\tau})}{2 \tau s^2} \d s+  \int_{0}^{1}  \frac{W_2^2 ( \hat  \mu_{k+s}^{\tau}, \mu_k^{\tau})}{2 \tau s^2} \d s \leq   F (\rho_{k}^{\tau}, \mu_{k}^{\tau}).
	\end{multline} 
	To obtain \eqref{eq:de Giorgi1} we define a function $g: [0,1] \to \R $ such that
	\begin{align*}
		g(s) := \min_{(\rho, \mu )}   \,  F (\rho, \mu) + \frac{1}{2 \tau s} W_2^2 (\rho, \rho_k^{\tau})+ \frac{1}{2 \tau s}  W_2^2 (\mu, \mu _k^{\tau}).
	\end{align*} 
	This function is decreasing and hence differentiable a.e. At differentiability points, we necessarily have
	\begin{align*} 
		g^\prime(s)=- \frac{W_2^2 ( \hat  \rho_{k+s}^{\tau}, \rho_k^{\tau})}{2 \tau s^2}  -   \frac{W_2^2 ( \hat  \mu_{k+s}^{\tau}, \mu_k^{\tau})}{2 \tau s^2} .
	\end{align*}
	
	On the other hand, for a monotone function we have an inequality by the fundamental theorem of calculus, which gives here 
	\begin{align*} 
		\int_0^1 g^\prime(s) \d s  \geq g(1)-g(0) = 	F (\rho_{k+1}^{\tau}, \mu_{k+1}^{\tau})  + \frac{1}{2 \tau} W_2^2 (\rho_{k+1}^{\tau}, \rho_k^{\tau})+ \frac{1}{2 \tau}  W_2^2 (\mu_{k+1}^{\tau}, \mu _k^{\tau}) - 	F (\rho_{k}^{\tau}, \mu_{k}^{\tau}).
	\end{align*} Combining the last two lines, we obtain \eqref{eq:de Giorgi1}. Moreover, we have, by the optimality conditions, the following equalities
	\begin{align} \label{deG_optimality}
		\nabla f_a (\hat \rho_{k+s}^{\tau}, \hat \mu_{k+s}^{\tau}) + \frac{\nabla \varphi_{\hat \rho_{k+s}^{\tau}\to\rho_{k}^{\tau}}}{s \tau} = 0 \quad  \hat \rho_{k+s}- \mbox{a.e.} , \quad	\nabla f_b (\hat \mu_{k+s}^{\tau}, \hat \mu_{k+s}^{\tau}) + \frac{\nabla \varphi_{\hat \mu_{k+s}^{\tau}\to\mu_{k}^{\tau}}}{s \tau} = 0 \quad  \hat \mu_{k+s}-\mbox{a.e.},
	\end{align} 
	where $\varphi_{\hat \rho_{k+s}^{\tau}\to\rho_{k}^{\tau}}$ and $\varphi_{\hat \mu_{k+s}^{\tau}\to\mu_{k}^{\tau}}$ are the Kantorovich potentials associated with the transports  from $\hat \rho_{k+s}^\tau$ to $\rho_k^{\tau}$, and from $\hat \mu_{k+s}^\tau$ to $\mu_k^{\tau}$ respectively.  Using \eqref{deG_optimality} and Brenier Theorem (see \cite{B87}), we obtain
	\begin{align} \label{eq:optimality}
		\begin{split}
			\frac{W_2^2 ( \hat  \rho_{k+s}^{\tau}, \rho_k^{\tau})}{(s\tau )^2}  &= \int_\O \hat  \rho_{k+s}^{\tau} \frac{|\nabla \varphi_{\hat \rho_{k+s}^{\tau}\to\rho_{k}^{\tau}}|^2}{(s\tau )^2} \d x  = \int_\O \hat  \rho_{k+s}^{\tau} |\nabla f_a (\hat \rho_{k+s}^\tau,  \hat \mu_{k+s}^\tau)|^2 \d x ,\\
			\frac{W_2^2 ( \hat  \mu_{k+s}^{\tau}, \mu_k^{\tau})}{(s\tau )^2} &= \int_\O \hat  \mu_{k+s}^{\tau} \frac{|\nabla \varphi_{\hat \mu_{k+s}^{\tau}\to\mu_{k}^{\tau}}|^2}{(s\tau )^2} \d x = \int_\O \hat  \mu_{k+s}^{\tau} |\nabla f_b (\hat \rho_{k+s}^\tau,  \hat \mu_{k+s}^\tau)|^2 \d x .
		\end{split} 
	\end{align} 
	Using \eqref{eq:optimality}, Inequality \eqref{eq:de Giorgi1} re-writes
	\begin{multline}  \label{ineq:De Giorgi}
		F (\rho_{k+1}^{\tau}, \mu_{k+1}^{\tau})  + \frac{1}{2 \tau} W_2^2 (\rho_{k+1}^{\tau}, \rho_k^{\tau})+ \frac{1}{2 \tau}  W_2^2 (\mu_{k+1}^{\tau}, \mu _k^{\tau})  \\+   \frac{\tau}{2}\int_{0}^{1}\int_\O \hat \rho_{k+s}  |\nabla f_a (\hat \rho_{k+s}^\tau, \hat \mu_{k+s}^\tau)\vert^2 \d x \d s +  \frac{\tau}{2}\int_{0}^{1}\int_\O \hat \mu_{k+s}  | \nabla f_b (\hat \rho_{k+s}^\tau, \hat \mu_{k+s}^\tau)\vert^2 \d x \d s \leq 	F (\rho_{k}^{\tau}, \mu_{k}^{\tau}).
	\end{multline} 
	
	\begin{definition}[Piecewise constant interpolation]
		\label{defn:Piecewise_constant_interpolation}
		We define the piecewise constant interpolation as a pair of piecewise constant curves $(\bar \rho_t^{\tau},  \bar \mu_t^{\tau}) $ and a pair of velocities $(\bar v_t^{\tau}, \bar w_t^{\tau} ) $ associated with these piecewise constant curves such that for every $t \in (k\tau, (k+1) \tau]$ and $k \in \{ 0, \cdots, N-1\}$, $N \in \N$, they satisfy
		\begin{align} \label{PC_curves}
			\begin{split}
				(\bar \rho_t^{\tau},  \bar \mu_t^{\tau}) &= (\rho_{k+1}^{\tau}, \mu_{k+1}^{\tau}), \\
				(\bar v_t^{\tau}, \bar w_t^{\tau} ) &= (v_{k+1}^{\tau}, w_{k+1}^{\tau}) =  \bigg (\frac{\mbox{id}- T_{\rho_{k+1}\to \rho_k }^\tau}{\tau}, \frac{\mbox{id}- T_{\mu_{k+1} \to \mu_k }^\tau }{\tau}\bigg)
			\end{split}
		\end{align} where $T_{\rho_{k+1}\to \rho_k^\tau }^\tau $ and $T_{\mu_{k+1} \to \mu_k }^\tau $ are the optimal transport maps from $\rho_{k+1}^\tau$ to $\rho_k^\tau$ and from $\mu_{k+1}^\tau$ to $\mu_k^\tau$ respectively. We define the momentum variables by $\bar E^\tau (t) :=(\bar E_\rho^\tau (t), \bar E_\mu^\tau (t)) = (\bar \rho_t^\tau  \bar v_t^\tau , \bar \mu^\tau_t \bar w_t^\tau  )$.
	\end{definition}
	
	\begin{definition}[Piecewise geodesic interpolation]
		\label{defn:Piecewise_geo_interpolation}
		We define the piecewise geodesic interpolation as a pair of densities $(\tilde{\rho}^\tau_t, \tilde{ \mu}^\tau_t)$ that interpolate the discrete values $(\rho_k^\tau, \mu_k^\tau)_k$ along Wasserstein geodesics: for each $k$ we define
		\begin{align} \label{geo_curves}
			(\tilde{\rho}_t^\tau, \tilde{ \mu}_t^\tau  ) =  \left ( \left (  \mbox{id}-(k\tau - t) v_k^\tau\right)_\# \rho_k^\tau, \left (  \mbox{id}-(k\tau - t) w_k^\tau \right)_\# \mu_k^\tau\right ), \quad \mbox{for } t \in ((k-1)\tau, k \tau].
		\end{align} 
		We also define some velocity fields $(\tilde{v}_t^\tau, \tilde{w}_t^\tau )$ as follows
		$$\tilde{v}_t^\tau=v_k^\tau\circ  \left (  \mbox{id}-\frac{k\tau - t}{\tau} v_k^\tau\right)^{-1},\;
		\tilde{w}_t^\tau=w_k^\tau\circ  \left (  \mbox{id}-\frac{k\tau - t}{\tau} v_k^\tau\right)^{-1},
		\quad \mbox{for } t \in ((k-1)\tau, k \tau).
		$$
		In this way we can check that we have 
		$$
		\partial_t \tilde{\rho_t}^\tau+\nabla\cdot(\tilde{\rho_t}^\tau \tilde v_t^\tau)=0, \quad \partial_t \tilde{\mu}_t^\tau+\nabla\cdot(\tilde{\mu}_t^\tau \tilde w_t^\tau)=0,
		$$
		and, for $t\in ((k-1)\tau,k\tau)$, we also have
		\begin{align*}
			\|\tilde v_t^\tau \|_{L^2 (\tilde \rho_t^\tau)} = \vert (\tilde \rho_t^\tau)'  \vert (t)=\frac{W_2(\rho_{k-1}^\tau,\rho_k^\tau)}{\tau}, \quad \mbox{and} \quad 	\|\tilde w_t^\tau \|_{L^2 (\tilde \mu_t^\tau)} = \vert (\tilde \mu_t^\tau)'  \vert (t)=\frac{W_2(\mu_{k-1}^\tau,\mu_k^\tau)}{\tau}.
		\end{align*} 
		
		We also define the momentum variables by $\tilde E^\tau (t) = (\tilde E_\rho^\tau(t), \tilde E_\mu^\tau(t)) := (\tilde \rho_t^\tau \tilde v_t^\tau , \tilde \mu_t^\tau \tilde  w_t^\tau )$. 
	\end{definition}

	\medskip
	
	Now that we have defined the three interpolations above and that we have improved \eqref{ineq:JKO} into \eqref{ineq:De Giorgi}, we would like to replace the terms involving the De Giorgi variational interpolation $(\hat \rho^\tau_{k+s}, \hat \mu^\tau_{k+s})$ in \eqref{ineq:De Giorgi} by the slope functional $\Slo (\hat \rho^\tau_{k+s}, \hat \mu^\tau_{k+s})$. To be able to do that, first we prove two technical lemmas (Lemmas \ref{lem S^2 over S bound} and \ref{lem:energy_estimate} below) and then by using them we prove that the pair of curves obtained via the De Giorgi variational interpolation belongs to the space $\mc H$.

	\begin{lemma} \label{lem S^2 over S bound}
		For every $\rho, \mu \in C^\infty(\O)$ strictly positive densities we have
		\begin{align} \label{bound S^2 over S}
			\int_\O \left ( \nabla \rho \nabla f_a (\rho, \mu) +  \nabla \mu \nabla f_b (\rho, \mu ) \right ) \geq r_0\int_\O \frac{|\nabla S|^2}{S},
		\end{align} where $S:= \rho + \mu$ and $r_0>0$ is defined in Remark \ref{rem:r_0} in Section \ref{sec:convexification}.
	\end{lemma}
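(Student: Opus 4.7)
The plan is to establish the claimed inequality pointwise in $\Omega$ by splitting the domain according to whether $(\rho(x),\mu(x))$ lies in $A$ or in $B$, and then integrating. The interface $\partial A\cap\partial B$ causes no trouble: $f\in C^1$ globally by Corollary~\ref{lem:f_convex} so the two expressions for $\nabla f_a$, $\nabla f_b$ agree continuously there, and for smooth strictly positive $(\rho,\mu)$ the set where $(\rho,\mu)$ hits this one-dimensional interface is either of measure zero or a region on which $\nabla\rho,\nabla\mu$ are constrained to satisfy the boundary relations, so that both sides of the inequality match.

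On the part of $\Omega$ where $(\rho,\mu)\in A$, the identity $f(a,b)=\tilde f(a+b)$ yields $f_a=f_b=\tilde f'(a+b)$, and the chain rule gives $\nabla f_a(\rho,\mu)=\nabla f_b(\rho,\mu)=\tilde f''(S)\nabla S$. Hence the integrand reduces to $\tilde f''(S)|\nabla S|^2$. Since $S\geq 2$ on $A$ and $s\tilde f''(s)\geq r_0$ for $s\geq 2$ by Remark~\ref{rem:r_0}, one gets the pointwise bound $\tilde f''(S)|\nabla S|^2\geq r_0\,|\nabla S|^2/S$.

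On the part of $\Omega$ where $(\rho,\mu)\in B$, direct computation from $f=f_0$ gives
$$
\nabla\rho\cdot\nabla f_a(\rho,\mu)+\nabla\mu\cdot\nabla f_b(\rho,\mu)=\frac{|\nabla\rho|^2}{\rho}+2\,\nabla\rho\cdot\nabla\mu+\frac{|\nabla\mu|^2}{\mu},
$$
and the desired pointwise inequality is equivalent to the positive semidefiniteness of
$$
M=\begin{pmatrix}\dfrac{1}{\rho}-\dfrac{r_0}{S} & 1-\dfrac{r_0}{S}\\ 1-\dfrac{r_0}{S} & \dfrac{1}{\mu}-\dfrac{r_0}{S}\end{pmatrix}.
$$
I would check this via trace and determinant. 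The trace $S/(\rho\mu)-2r_0/S$ is nonnegative because AM-GM gives $S^2\geq 4\rho\mu$ while $r_0\leq 1/2<2$. The determinant simplifies algebraically to $[S(1-\rho\mu)-r_0(S-2\rho\mu)]/(\rho\mu S)$. When $S\leq 2\rho\mu$ this is automatic (using $\rho\mu<1$ in $B$); otherwise, setting $\Phi(S,p):=S(1-p)/(S-2p)$, the condition becomes $\Phi(S,\rho\mu)\geq r_0$. A one-line computation yields $\partial_p\Phi(S,p)=S(2-S)/(S-2p)^2$, so $\Phi(S,\cdot)$ is monotone on the admissible $p$-range in $B$, namely $[0,S^2/4]$ for $S\leq 2$ and $[0,\pi(S))$ for $S>2$. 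Its minimum is therefore either $\Phi(S,0)=1$ or $\Phi(S,\pi(S))=S\tilde f''(S)$, and both are $\geq r_0$ by the very definition of $r_0$.

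The main obstacle I expect is the algebraic reduction on the $B$ side: turning the determinant condition into the sharp scalar inequality $\Phi(S,p)\geq r_0$ and connecting $\Phi(S,\pi(S))$ back to $r_0$ via the formula $\tilde f''(s)=(1-\pi(s))/(s-2\pi(s))$ from Lemma~\ref{lem:Properties_pi}. Once the pointwise bound is proved on both $A$ and $B$, integrating over $\Omega$ yields the claim immediately.
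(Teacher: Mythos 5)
Your proof is correct and follows essentially the same route as the paper: split according to $(\rho,\mu)\in A$ or $B$, use $\nabla f_a=\nabla f_b=\tilde f''(S)\nabla S$ and the definition of $r_0$ on $A$, and on $B$ reduce the pointwise bound to positive semidefiniteness of the same $2\times 2$ matrix, whose determinant is handled by monotonicity in $p=\rho\mu$ at fixed $S$ with the extreme cases $p=0$ and $p=\pi(S)$ (resp.\ $p\leq S^2/4$ for $S\leq 2$) controlled by $r_0\leq s\tilde f''(s)$. Your only cosmetic deviations are checking trace plus determinant instead of the diagonal entries plus determinant, and making the paper's ``minimal when the product is maximal'' observation explicit via $\partial_p\Phi=S(2-S)/(S-2p)^2$.
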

	\begin{proof}
		We consider the sets $A$ and $B$ separately. For $(\rho, \mu) \in A$, we have
		$$
		\nabla \rho \nabla f_a (\rho, \mu) + \nabla \mu \nabla f_b (\rho, \mu ) = \lvert \nabla S\rvert^2 \tilde f''(S) = \lvert \nabla S\rvert^2 \frac{1- \pi(S)}{S - 2 \pi(S)}\geq r_0\frac{|\nabla S|^2}{S}.$$
		For $(\rho, \mu) \in B$, we have 
		$$
		\nabla \rho \nabla f_a (\rho, \mu)  + \nabla \mu \nabla f_b (\rho, \mu )  = \frac{ \vert \nabla \rho \vert ^2}{\rho} + 2 \nabla \rho \nabla \mu + \frac{\vert \nabla \mu \vert ^2}{\mu }.
		$$
		We want to show that the inequality
		\begin{align*}
			\frac{ \vert \nabla \rho \vert ^2}{\rho} + 2 \nabla \rho \nabla \mu + \frac{\vert \nabla \mu \vert ^2}{\mu } \geq r_0 \frac{ \vert \nabla \rho + \nabla \mu \vert^2}{S}
		\end{align*} is always satisfied. This means we want to have
		\begin{align*}
			\vert \nabla \rho \vert^2 \Big (\frac{1}{\rho} - \frac{r_0}{S}\Big) + 	\vert \nabla \mu\vert^2 \Big(\frac{1}{\mu} - \frac{r_0}{S}\Big)  + 2 \nabla \rho \nabla \mu \left(1 - \frac{r_0}{S}\right) \geq 0.
		\end{align*}
		We now look at the matrix
		
		$$
		\left(	\begin{array}{cc}
			\frac{1}{\rho} - \frac{r_0}{S} & 1 - \frac{r_0}{S} \\ 1 - \frac{r_0}{S} & 	\frac{1}{\mu} - \frac{r_0}{S}
		\end{array}\right),
		$$
		and prove that it is positive definite. Using $r_0\leq 1$ and $\rho,\mu\leq S$ we easily see that the terms on the diagonal are non-negative. We then compute the determinant and obtain 
		
		$$\frac{1}{\rho \mu} - \frac{r_0}{S} \left( \frac{1}{\rho} + \frac{1}{\mu} - 2\right) - 1  = \frac{1-r_0}{\rho \mu} - 1 +  \frac{2r_0}{S}.$$
		For fixed sum $S=\rho+\mu$ this last expression is minimal if the product $\rho\mu$ is maximal. If $S>2$ we can then bound this from below by $\frac{1-r_0}{\pi(S)}-1+\frac{2r_0}{S}$. This quantity is non-negative as a consequence of the definition of $r_0$ since we have
		$$r_0\leq s\frac{1-\pi}{s-2\pi}\Leftrightarrow \frac{1-r_0}{\pi}-1+\frac{2r_0}{s}\geq 0.$$
		If $S\leq 2$ we just use $\rho\mu\leq S^2/4\leq S/2$ and bound the same quantity from below by $2\frac{1-r_0}{S}-1+\frac{2r_0}{S}=\frac 2S-1\geq 0.$
	\end{proof}

	\begin{lemma} \label{lem:energy_estimate}
		Let us consider the auxiliary energy functional $G(\rho, \mu) = \int_\O g(\rho, \mu)$ where $g(a,b) := a\log a + b\log b$ (defined as equal to $+\infty$ if measures are not absolutely continuous). Suppose that $\rho_0, \mu_0$ are given absolutely continuous measures and let us call  $(\rho_1, \mu_1)$ the unique solution of
		$$(\rho_1,\mu_1)=\argmin_{(\rho,\mu)} F(\rho,\mu)+\frac{W_2^2(\rho,\rho_0)}{2\tau}+\frac{W_2^2(\mu,\mu_0)}{2\tau}.$$
		Then, setting $S_1:=\rho_1+\mu_1$, we have 
		\begin{align*}
			G (\rho_0, \mu_0) - G(\rho_1, \mu_1) \geq r_0 \tau \int_\O \frac{|\nabla S_1|^2}{S_1},
		\end{align*}
		where the number $r_0>0$ is defined in Remark \ref{rem:r_0} in Section \ref{sec:convexification}.
	\end{lemma}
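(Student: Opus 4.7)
The plan is to apply the flow-interchange technique of Matthes--McCann--Savar\'e, comparing the dissipations of $F$ and $G$ along their respective gradient flows. The functional $G(\rho,\mu)=\int\rho\log\rho+\int\mu\log\mu$ is geodesically convex in $W_2(\O)\times W_2(\O)$, and its gradient flow is a pair of independent heat equations with no-flux (Neumann) boundary conditions. Writing $(\rho_t,\mu_t):=(e^{t\Delta}\rho_1,e^{t\Delta}\mu_1)$ for $t\ge 0$, I would use this heat-regularised curve as a competitor in the JKO minimization.

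Summing the EVI characterisation of the heat flow on each factor with respective targets $\rho_0$ and $\mu_0$ gives
\begin{equation*}
\frac{d^+}{dt}\bigg|_{t=0^+}\frac{W_2^2(\rho_t,\rho_0)+W_2^2(\mu_t,\mu_0)}{2}\leq G(\rho_0,\mu_0)-G(\rho_1,\mu_1),
\end{equation*}
while the minimality of $(\rho_1,\mu_1)$ compared to the competitor $(\rho_t,\mu_t)$ yields
\begin{equation*}
F(\rho_1,\mu_1)-F(\rho_t,\mu_t)\leq \frac{W_2^2(\rho_t,\rho_0)+W_2^2(\mu_t,\mu_0)-W_2^2(\rho_1,\rho_0)-W_2^2(\mu_1,\mu_0)}{2\tau}.
\end{equation*}
Dividing by $t$, taking $\liminf_{t\to 0^+}$, and combining the two displays gives
\begin{equation*}
\tau\,\liminf_{t\to 0^+}\frac{F(\rho_1,\mu_1)-F(\rho_t,\mu_t)}{t}\leq G(\rho_0,\mu_0)-G(\rho_1,\mu_1).
\end{equation*}

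To estimate the left-hand side from below, I would exploit the fact that $\rho_s$ and $\mu_s$ are $C^\infty$ and strictly positive for every $s>0$ (parabolic regularisation under the Neumann heat semigroup). An integration by parts gives
\begin{equation*}
-\frac{d}{ds}F(\rho_s,\mu_s)=\int_\O\bigl(\nabla f_a(\rho_s,\mu_s)\cdot\nabla\rho_s+\nabla f_b(\rho_s,\mu_s)\cdot\nabla\mu_s\bigr),
\end{equation*}
so Lemma \ref{lem S^2 over S bound} applies at each positive $s$ and bounds the right-hand side from below by $r_0\int_\O |\nabla S_s|^2/S_s$, where $S_s=\rho_s+\mu_s=e^{s\Delta}S_1$. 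Integrating from $0$ to $t$ yields
\begin{equation*}
\frac{F(\rho_1,\mu_1)-F(\rho_t,\mu_t)}{t}\ge \frac{r_0}{t}\int_0^t\int_\O\frac{|\nabla S_s|^2}{S_s}\d x\d s.
\end{equation*}
Since $S_s\rightharpoonup S_1$ as $s\to 0^+$ by continuity of the heat semigroup, and since $S\mapsto\int|\nabla S|^2/S=4\int|\nabla\sqrt{S}|^2$ is convex and weakly lower semi-continuous, a Fatou-type argument yields $\liminf_{t\to 0^+}[F(\rho_1,\mu_1)-F(\rho_t,\mu_t)]/t\ge r_0\int_\O|\nabla S_1|^2/S_1$, which combined with the previous bound completes the proof.

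The main subtlety is that Lemma \ref{lem S^2 over S bound} requires $C^\infty$ strictly positive densities, so it cannot be applied directly at $(\rho_1,\mu_1)$; the flow-interchange trick precisely bypasses this issue by applying the Lemma at each positive time $s>0$ and recovering the inequality at $s=0$ only via weak lower semi-continuity.
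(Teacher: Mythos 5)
Your argument is correct in substance, but it follows a genuinely different route from the paper. The paper does not run the heat semigroup from $(\rho_1,\mu_1)$: it perturbs the JKO step itself, replacing $F$ by $F_\ve=F+\ve G$ and smoothing the base points, so that the optimality conditions together with Caffarelli's regularity theory make the perturbed minimizers $C^\infty$ and strictly positive; it then writes the above-tangent inequality for the geodesically convex $G$ along the Wasserstein geodesic from the perturbed minimizer back to the base point, substitutes the optimality conditions to turn the Kantorovich potential gradients into $\tau\nabla f_a,\tau\nabla f_b$ (plus $\ve$-terms which are dropped), applies Lemma \ref{lem S^2 over S bound}, and finally lets $\ve\to0$ by lower semi-continuity and $\Gamma$-convergence. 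Your version is instead the classical Matthes--McCann--Savar\'e flow interchange: you leave the JKO problem untouched, use the EVI of the Neumann heat flow (the $W_2$-gradient flow of $G$, which requires the convexity of $\O$ that the paper also exploits) together with the minimality of $(\rho_1,\mu_1)$ against the heat-flow competitors, and you apply Lemma \ref{lem S^2 over S bound} at each time $s>0$ thanks to parabolic smoothing. This buys a proof with no Euler--Lagrange conditions and no elliptic regularity, at the price of two points you should make explicit: (i) to integrate the dissipation identity down to $s=0$ you need $\limsup_{s\to0^+}F(\rho_s,\mu_s)\le F(\rho_1,\mu_1)$, which follows from Jensen's inequality with the (symmetric, mass-preserving) Neumann heat kernel since $f$ is jointly convex; (ii) the derivative $\frac{d^+}{dt}\big|_{t=0^+}$ in the EVI is best handled in integrated form, combined with the lower semi-continuity of $G$ at $t=0^+$. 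With these two routine additions your argument yields exactly the stated inequality, so it is a valid alternative to the paper's proof (note, though, that the paper's regularized optimality conditions are reused later, e.g. for Lemma \ref{lem:de_Giorgi_H^1}, so its heavier machinery is not wasted).
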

	\begin{proof}
		We prove this result via a flow-interchange inequality and a regularization argument.
		We fix $\ve >0$ and define $F_\ve (\rho, \mu) := F (\rho, \mu)+ \ve G (\rho, \mu) $. For a given sequence of smooth measures $\rho_{0,\ve}, \mu_{0,\ve} \in C^\infty $ we consider the sequence of minimizers
		$$(\rho_{1,\ve},\mu_{1,\ve})=\argmin_{(\rho,\mu)} F_\ve(\rho,\mu)+\frac{W_2^2(\rho,\rho_{0,\ve})}{2\tau}+\frac{W_2^2(\mu,\mu_{0,\ve})}{2\tau}.$$
		
		If we write the optimality conditions for the above minimization problem we have $\rho_{1,\ve},\mu_{1,\ve}>0$ (the argument is the same as in the proof of Lemma 8.6 in \cite{S15}) and 
		$$f_a(\rho_{1,\ve},\mu_{1,\ve})+\ve \log\rho_{1,\ve}+\frac{\varphi_{\rho_{1,\ve}\to\rho_{0,\ve}}}{\tau}=C,\quad f_b(\rho_{1,\ve},\mu_{1,\ve})+\ve \log\mu_{1,\ve}+\frac{\varphi_{\mu_{1,\ve}\to\mu_{0,\ve}}}{\tau}= \tilde C,$$
		where $C, \tilde C $ are some positive constants. Since $(a,b)\mapsto (f_a(a,b)+\ve\log a,f_b(a,b)+\ve\log b)$ is the gradient of a strictly convex function it is a diffeomorphism and we deduce that $\rho_{1,\ve}$ and $\mu_{1,\ve}$ have the same regularity of the Kantorovich potential, and are Lipschitz continuous. They are also bounded from below because of the logarithm in the optimality conditions and Caffarelli's theory (see section 4.2.2 in \cite{Villani}) implies that the Kantorovich potentials are $C^{2,\alpha}$ (we assumed that we are in a convex domain). Iterating these regularity arguments gives that  $\rho_{1,\ve}$ and $\mu_{1,\ve}$ are $C^\infty$ functions.
		
		We then use the geodesic convexity of the entropy to deduce that we have
		\begin{align*}
			G(\rho_{0,\ve}, \mu_{0,\ve}) - G(\rho_{1,\ve}, \mu_{1,\ve}) \geq \frac{\d}{\d s} G (\rho_s, \mu_s ) \bigg | _{s=0} =\left( \int_\O \p_s \rho_s \log \rho_s  + \int_\O  \p_s \mu_s \log \mu_s \right) \bigg | _{s=0}, 
		\end{align*}
		where $(\rho_s, \mu_s)$ is a pair of geodesic curves in $W_2(\Omega) \times W_2 (\O)$ connecting the densities $(\rho_{1,\ve}, \mu_{1,\ve})$ to the densities $(\rho_{0,\ve}, \mu_{0,\ve})$ (pay attention that, in a JKO scheme, this interpolation starts from the new points and go back to the old points). We then use the continuity equations $\p_s \rho_s + \nabla \cdot (\rho_s v_s)=0$, and $\p_s \mu_s + \nabla \cdot (\mu_s w_s)=0$, together with the fact that the initial velocity fields $v_0$ and $w_0$ can be obtained as the opposite of the gradient of the corresponding Kantorovich potentials. Hence we have
		\begin{multline} \label{G_entropy}
			G(\rho_{0,\ve}, \mu_{0,\ve}) - G(\rho_{1,\ve}, \mu_{1,\ve}) 
			\geq - \int_\O  \nabla \cdot (\rho_0 v_0) \log \rho_0 - \int_\O \nabla \cdot (\mu_0 w_0) \log \mu_0 
			\\ =\int_\O \nabla \rho_0 \cdot v_0 +\int_\O \nabla \mu_0 \cdot w_0
			= - \int_\O \nabla \rho_{1,\ve} \cdot  \nabla \varphi_{\rho_{1,\ve}\to\rho_{0,\ve}} - \int_\O \nabla \mu_{1,\ve}  \cdot \nabla\varphi_{\mu_{1,\ve}\to\mu_{0,\ve}}.
		\end{multline}
		Using the optimality conditions we obtain
		\begin{align*}	
			G(\rho_{0,\ve}, \mu_{0,\ve}) - G(\rho_{1,\ve}, \mu_{1,\ve})\geq & \tau \int_\Omega\nabla\rho_{1,\ve}\cdot\nabla f_a(\rho_{1,\ve}, \mu_{1,\ve})+
			\ve\tau\int_\Omega\nabla\rho_{1,\ve}\cdot\nabla\log(\rho_{1,\ve})\\
			+&\tau\int_\Omega\nabla\mu_{1,\ve}\cdot\nabla f_b(\rho_{1,\ve}, \mu_{1,\ve})+
			\ve\tau\int_\Omega \nabla\mu_{1,\ve}\cdot\nabla\log(\mu_{1,\ve}).
		\end{align*}
		Dropping the positive terms with the gradients of the logarithms and applying Lemma \ref{lem S^2 over S bound} we then obtain
		$$	G(\rho_{0,\ve}, \mu_{0,\ve}) - G(\rho_{1,\ve}, \mu_{1,\ve})\geq r_0 \tau \int_\O \frac{|\nabla S_{1,\ve}|^2}{S_{1,\ve}},$$
		where $S_{1,\ve}=\rho_{1,\ve}+\mu_{1,\ve}$. It is then enough to let $\ve\to 0$ if we choose an approximation $\rho_{0,\ve},\mu_{0,\ve}$ s.t. $G(\rho_{0,\ve},\mu_{0,\ve})\to G(\rho_0,\mu_0)$. Note that the terms in $G(\rho_{1,\ve},\mu_{1,\ve})$ and $S_{1,\ve}$ are lower semi-continuous for the weak convergence (and the sequence $(\rho_{0,\ve},\mu_{0,\ve})$ weakly converges  to $(\rho_1,\mu_1)$ by $\Gamma-$convergence of the minimized functionals and uniqueness of the minimizer at the limit).
	\end{proof}

	\begin{lemma} \label{lem:de_Giorgi_H^1}
		Let $(\hat \rho_t^\tau, \hat \mu_t^\tau)$ be the pair of curves obtained by the De Giorgi variational interpolation. Then $ (\hat \rho_t^\tau, \hat \mu_t^\tau) \in \mc H$ for every $t$.
	\end{lemma}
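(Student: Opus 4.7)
The plan is to verify the two defining conditions of the class $\mathcal H$ (Definition \ref{defn: H}) for the pair $(\hat\rho^\tau_{k+s},\hat\mu^\tau_{k+s})$, for arbitrary $k$ and $s\in(0,1]$. The key observation I would exploit throughout is that, by the very definition \eqref{interpol_deGiorgi}, $(\hat\rho^\tau_{k+s},\hat\mu^\tau_{k+s})$ is itself the unique solution of a single-step JKO problem starting from $(\rho_k^\tau,\mu_k^\tau)$, with the modified time step $s\tau$ in place of $\tau$. Consequently, every estimate proved for a single JKO step applies verbatim to the De Giorgi interpolation after the substitution $\tau\leftrightarrow s\tau$.

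To establish condition \ref{H:prop2}, I would directly invoke Lemma \ref{lem:energy_estimate} for the JKO step with time step $s\tau$: setting $\hat S:=\hat\rho^\tau_{k+s}+\hat\mu^\tau_{k+s}$, this yields
\begin{equation*}
r_0\, s\tau \int_{\Omega} \frac{|\nabla \hat S|^2}{\hat S}\leq G(\rho_k^\tau,\mu_k^\tau) - G(\hat\rho^\tau_{k+s},\hat\mu^\tau_{k+s}).
\end{equation*}
The right-hand side is finite since $G$ is bounded below on probability measures over the bounded domain $\Omega$, and $G(\rho_k^\tau,\mu_k^\tau)$ is finite by induction on $k$ starting from $F(\rho_0,\mu_0)<+\infty$ (using that $F$ is non-increasing along the JKO iterates and that $F$ controls $G$ up to bounded lower-order terms). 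The identity $|\nabla\sqrt{\hat S}|^2=\tfrac{1}{4}|\nabla \hat S|^2/\hat S$ then gives $\sqrt{\hat S}\in H^1(\Omega)$.

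For condition \ref{H:prop1}, my plan is to combine the optimality conditions \eqref{deG_optimality} with Remark \ref{rem:local_Lipschitz}. Since $\Omega$ is bounded, the Kantorovich potentials $\varphi_{\hat\rho\to\rho_k}$ and $\varphi_{\hat\mu\to\mu_k}$ for the quadratic cost are Lipschitz on $\Omega$, with Lipschitz constants controlled by $\mathrm{diam}(\Omega)$. Integrating \eqref{deG_optimality} (and exploiting that test perturbations with zero total integral already determine the minimizer) upgrades the gradient identity to the pointwise relations
\[
f_a(\hat\rho,\hat\mu)=c_a-\frac{\varphi_{\hat\rho\to\rho_k}}{s\tau}\ \ \hat\rho\text{-a.e.},\qquad f_b(\hat\rho,\hat\mu)=c_b-\frac{\varphi_{\hat\mu\to\mu_k}}{s\tau}\ \ \hat\mu\text{-a.e.},
\]
for suitable constants $c_a,c_b$. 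Denoting by $\tilde F_a,\tilde F_b$ the Lipschitz extensions of the right-hand sides to all of $\Omega$, Remark \ref{rem:local_Lipschitz} provides, for any $\eta\in W^{1,\infty}_c(B)$ supported in the interior of $B$, a Lipschitz function $g:\R^2\to\R$ with $\eta(a,b)=g(f_a(a,b),f_b(a,b))$. The composition $g(\tilde F_a,\tilde F_b)$ is then Lipschitz on $\Omega$, coincides a.e.\ with $\eta(\hat\rho,\hat\mu)$, and therefore witnesses $\eta(\hat\rho,\hat\mu)\in H^1(\Omega)$. The case of $\eta$ touching the axes is recovered by approximating it from the interior of $B$ by an $L^\infty$-bounded sequence and using the weak closedness of $H^1$.

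The main obstacle, in my view, will be the identification $g(\tilde F_a,\tilde F_b)=\eta(\hat\rho,\hat\mu)$ almost everywhere. The optimality is only stated $\hat\rho$- (resp.\ $\hat\mu$-) a.e., so one must verify that the constants $c_a,c_b$ can be chosen uniformly and not merely component by component on $\{\hat\rho>0\}$, and that on the null set $\{\hat\rho=0\}\cup\{\hat\mu=0\}$ the identity persists. The former is handled by testing with globally zero-mean perturbations on $\Omega$, while the latter is automatic when $\mathrm{supp}\,\eta$ stays away from the axes (both sides vanish) and is recovered by the aforementioned approximation in the general case.
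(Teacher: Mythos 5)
Your argument follows essentially the same route as the paper's proof: Property \ref{H:prop2} is obtained by applying Lemma \ref{lem:energy_estimate} with time step $s\tau$ in place of $\tau$, and Property \ref{H:prop1} by using the optimality conditions of the variational interpolation step (which make $f_a(\hat\rho^\tau_t,\hat\mu^\tau_t)$ and $f_b(\hat\rho^\tau_t,\hat\mu^\tau_t)$ Lipschitz through the Kantorovich potentials) together with Remark \ref{rem:local_Lipschitz}. Your additional discussion of the constants, the $\hat\rho$- and $\hat\mu$-a.e.\ identification, and the behaviour near the axes merely spells out details the paper's terse proof leaves implicit, and is correct.
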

	\begin{proof}
		First, we note that the optimality conditions provide Lipschitz continuity, for fixed $\tau$, for $f_a (\hat \rho_t^\tau, \hat \mu_t^\tau)$ and $f_a (\hat \rho_t^\tau, \hat \mu_t^\tau)$, and hence for $\chi(\hat \rho_t^\tau, \hat \mu_t^\tau)$  (see Remark \ref{rem:local_Lipschitz} in Section \ref{sec:convexification}). Therefore Property \ref{H:prop1} of Def. \ref{defn: H} is satisfied.
		
		For Property  \ref{H:prop2} of Def. \ref{defn: H}, we apply Lemma \ref{lem:energy_estimate} with $s\tau$ instead of $\tau$, which guarantees the $H^1$ behavior of the square root of the sum.
	\end{proof}
	\begin{remark}
		Note that the very same argument implies that the curves obtained by the piecewise constant interpolation also belong to the space $\mc H$.
	\end{remark}

	\subsection{Existence of solutions}
	The goal of this section is to prove Theorem \ref{th:existence}. 	
	We start by proving the following lemma:
	\begin{lemma}
		\label{lem:limiti curve}
		The pair of curves $(\hat \rho_t^{\tau}, \hat \mu_t^{\tau})$, $(\bar \rho_t^\tau , \bar \mu_t^\tau )$ and  $(\tilde \rho_t^\tau, \tilde \mu_t^\tau)$ given by Definitions \ref{defn:de_Giorgi_interpolation}, \ref{defn:Piecewise_constant_interpolation} and \ref{defn:Piecewise_geo_interpolation} respectively  converge up to subsequences, as $\tau \to 0$, to the same limit curve $(\rho_t, \mu_t)$ uniformly in $W_2$ distance. Moreover, the vector-valued measure $\tilde E^\tau$ corresponding to the momentum variable of the piecewise geodesic interpolations, also converges weakly-* in the sense of measures on $[0,T]\times\Omega$ to a limit vector measure $E$ along the same subsequence.
	\end{lemma}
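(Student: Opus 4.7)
The plan is to reduce everything to the piecewise geodesic interpolation, which is the most convenient because it is an absolutely continuous curve in $W_2$ with a quantitative control of its metric derivative.

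\textbf{Step 1: compactness for the piecewise geodesic interpolation.} From the basic one-step inequality \eqref{basicH1estimate}, summing over $k$ I obtain
\begin{equation*}
\sum_{k=0}^{N-1}\frac{W_2^2(\rho_{k+1}^\tau,\rho_k^\tau)+W_2^2(\mu_{k+1}^\tau,\mu_k^\tau)}{\tau}\leq 2\bigl(F(\rho_0,\mu_0)-\inf F\bigr).
\end{equation*}
By the definition of the piecewise geodesic interpolation, for $t\in((k-1)\tau,k\tau)$ the metric derivatives are $|(\tilde\rho^\tau)'|(t)=W_2(\rho_{k-1}^\tau,\rho_k^\tau)/\tau$ and analogously for $\tilde\mu^\tau$; hence $\int_0^T(|(\tilde\rho^\tau)'|^2+|(\tilde\mu^\tau)'|^2)\,\d t$ is bounded independently of $\tau$. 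By Cauchy--Schwarz this gives uniform $1/2$--Hölder estimates
\begin{equation*}
W_2(\tilde\rho_s^\tau,\tilde\rho_t^\tau)+W_2(\tilde\mu_s^\tau,\tilde\mu_t^\tau)\leq C\sqrt{|t-s|}.
\end{equation*}
Since $(\mathcal P(\Omega),W_2)$ is compact ($\Omega$ is bounded), Arzelà--Ascoli (in its metric version, see Prop.~3.3.1 in \cite{AGS08}) yields a subsequence along which $(\tilde\rho^\tau,\tilde\mu^\tau)$ converges uniformly in $W_2$ to some limit curve $(\rho_t,\mu_t)$.

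\textbf{Step 2: the other interpolations have the same limit.} For $t\in(k\tau,(k+1)\tau]$ the piecewise constant interpolation satisfies $\bar\rho_t^\tau=\rho_{k+1}^\tau=\tilde\rho_{(k+1)\tau}^\tau$, so by the Hölder bound of Step~1 we have $W_2(\bar\rho_t^\tau,\tilde\rho_t^\tau)\leq C\sqrt{\tau}$, uniformly in $t$; the same holds for $\bar\mu^\tau$. For the De Giorgi variational interpolation, for $t=(k+s)\tau$ with $s\in(0,1]$ the very definition \eqref{interpol_deGiorgi} gives
\begin{equation*}
\frac{W_2^2(\hat\rho_{k+s}^\tau,\rho_k^\tau)+W_2^2(\hat\mu_{k+s}^\tau,\mu_k^\tau)}{2s\tau}\leq F(\rho_k^\tau,\mu_k^\tau)-F(\hat\rho_{k+s}^\tau,\hat\mu_{k+s}^\tau)\leq F(\rho_0,\mu_0)-\inf F,
\end{equation*}
where I have used that the JKO scheme decreases $F$. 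Hence $W_2(\hat\rho_t^\tau,\rho_k^\tau)\leq C'\sqrt{s\tau}\leq C'\sqrt{\tau}$, and combining with the Hölder estimate for $\tilde\rho^\tau$ between $k\tau$ and $(k+s)\tau$ I obtain $W_2(\hat\rho_t^\tau,\tilde\rho_t^\tau)\leq C''\sqrt{\tau}$ uniformly in $t$. Thus all three interpolations share the same $W_2$--uniform limit $(\rho_t,\mu_t)$.

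\textbf{Step 3: weak-* compactness of the momentum.} The vector measure $\tilde E^\tau=(\tilde E_\rho^\tau,\tilde E_\mu^\tau)$ on $[0,T]\times\Omega$ has total variation bounded by
\begin{equation*}
\int_0^T\!\!\int_\Omega\bigl(|\tilde v_t^\tau|\tilde\rho_t^\tau+|\tilde w_t^\tau|\tilde\mu_t^\tau\bigr)\d x\d t\leq \sqrt{T}\biggl(\int_0^T(|(\tilde\rho^\tau)'|^2+|(\tilde\mu^\tau)'|^2)\d t\biggr)^{\!1/2},
\end{equation*}
which is uniformly bounded by Step~1 (using that $\|\tilde v_t^\tau\|_{L^2(\tilde\rho_t^\tau)}=|(\tilde\rho^\tau)'|(t)$). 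Banach--Alaoglu then provides, along a further subsequence, a weak-* limit $E$ for $\tilde E^\tau$.

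\textbf{Main obstacle.} The argument is essentially standard in the JKO literature; the only genuinely delicate point is making sure that the three interpolations produce the \emph{same} limit curve. This is what Step~2 takes care of: the bound $W_2(\hat\rho_t^\tau,\tilde\rho_t^\tau)+W_2(\bar\rho_t^\tau,\tilde\rho_t^\tau)=O(\sqrt{\tau})$ uses in a crucial way both the telescoping estimate \eqref{basicH1estimate} and the minimality defining the variational interpolation, and these two inputs are precisely what prevents the three sequences from drifting apart. A diagonal extraction produces one common subsequence realizing all the required convergences.
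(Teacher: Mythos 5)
Your proof is correct and follows essentially the same route as the paper: the telescoped JKO estimate yields an $L^2$ bound on the metric derivatives of the geodesic interpolation, hence uniform $\tfrac12$-H\"older bounds and Ascoli--Arzel\`a compactness, while the piecewise constant and De Giorgi interpolations are shown to stay within $O(\sqrt{\tau})$ of it and the momenta are handled by a total-variation bound plus weak-* compactness. The only (minor, welcome) difference is that you derive explicitly from the minimality in \eqref{interpol_deGiorgi} the bound $W_2(\hat\rho_{k+s}^\tau,\rho_k^\tau)\leq C\sqrt{s\tau}$, which the paper uses implicitly.
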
 
	
	\begin{proof}
		Recall that the geodesic speed is constant on each interval $(k\tau, (k+1)\tau)$, and this implies
		\begin{align*} 
			\|\tilde v_t^\tau \|_{L^2(\tilde \rho_t^\tau)}   = \vert (\tilde \rho_t^\tau)'  \vert (t) = \frac{W_2(\rho_{k+1}, \rho_k)}{\tau } = \frac{1}{\tau } \int_\O \vert \mbox{id} - T_{k+1}^\tau \vert^2 \rho_{k+1}^\tau  = \|v_{k+1}^\tau\|_{L^2(\rho_{k+1}^\tau)},
		\end{align*} and similar for $\tilde w_t^\tau$. Then we obtain
		\begin{align*}
			\|\tilde v_t^\tau \|_{L^2(\tilde \rho_t^\tau)} = \|v_{k+1}^\tau\|_{L^2(\rho_{k+1}^\tau)} = \|\bar v_t^\tau \|_{L^2(\bar \rho_t^\tau)} \quad \mbox{and} \quad \|\tilde w_t^\tau \|_{L^2(\tilde \mu_t^\tau)} = \|w_{k+1}^\tau\|_{L^2(\mu_{k+1}^\tau)} = \|\bar w_t^\tau \|_{L^2(\bar \mu_t^\tau)}.
		\end{align*} 
		
		Let us note that we have 
		\begin{equation}\label{L2vel}
			\int_0^T |(\tilde\rho_t^\tau)'|(t)^2 \d t= \int_0^T \|\tilde v_t^\tau\|^2_{L^2(\tilde \rho_t^\tau )} \d t =\sum_k \tau \left( \frac{W_2(\rho_{k+1}, \rho_k)}{\tau } \right)^2 \leq C,
		\end{equation}
		where the inequality is a consequence of \eqref{basicH1estimate}.
		
		We first use this inequality to estimate the momentum variables, since we have 
		$$
		\int_0^T \int_\O \vert \tilde E^\tau_\rho (t) \vert  \d t =  	\int_0^T \|\tilde v_t^\tau \|_{L^1(\tilde \rho_t^\tau )} \d t 
		\leq \sqrt{T} \int_0^T \|\tilde v_t^\tau\|^2_{L^2(\tilde \rho_t^\tau )} \d t \leq C.
		$$
		
		Analogous estimates can be obtained for $\tilde E ^\tau_\mu$. This means that $\tilde E^\tau$ is bounded in $L^1 ([0,T] \times \Omega)$ and we obtain the weak-* compactness in the space of measures on space-time.
		
		\medskip 
		
		It is now classical in gradient flows, as a consequence of the estimate on the $L^2$ norm of the velocities \eqref{L2vel}, to obtain Hölder bounds on the geodesic interpolations. Indeed, the pair $(\tilde{\rho}^\tau_t, \tilde{ \mu}^\tau_t)$ is uniformly $\frac12-$Hölder continuous since by using the previous computations we can show that, for $s<t$,
		\begin{align*}
			W_2 (\tilde{\rho}^\tau_t, \tilde{\rho}^\tau_s) \leq  \sqrt{t-s}\left(\int_{t_1}^{t_2} |(\tilde\rho^\tau)'|(t)^2 \d t \right)^{1/2}\leq C\sqrt{t-s}, 
		\end{align*} 
		and an analogous estimate holds for $\tilde\mu^\tau$. Since the domain of the curves $\tilde \rho, \tilde \mu : [0,T] \to W_2(\Omega)$ is compact and so is the image domain $W_2(\Omega)$, we can pass to the limit by using Ascoli-Arzelà theorem. Therefore there exists a subsequence $\tau_j \to 0$ such that 
		\begin{align} \label{limit_E}
			\begin{split}
				\tilde E_\rho^{\tau_j} \to E_\rho \quad &\mbox{and}\quad \tilde E_\mu^{\tau_j} \to \tilde  E_\mu \quad \mbox{weakly-* as measures},\\ 
				\tilde \rho_t^{\tau_j} \to \rho_t  \quad &\mbox{and}\quad  \tilde \mu_t^{\tau_j} \to  \mu_t  \quad \mbox{ uniformly in } W_2. 
			\end{split}
		\end{align}
		Moreover the curves $(\bar \rho_t^\tau, \bar \mu_t^\tau )$ obtained from the piecewise constant interpolation converge uniformly to the same limit curve $(\rho_t, \mu_t)$ as the ones obtained from the piecewise geodesic interpolation since we have that
		\begin{align*}
			W_2 (\bar {\rho}_t^\tau, \tilde{\rho}^\tau_t) \leq C \sqrt{\tau} \quad \mbox{and} \quad 	W_2 (\bar {\mu}_t^\tau , \tilde{ \mu}^\tau_t) \leq \tilde C \sqrt{\tau},
		\end{align*} where $C, \tilde C$ are some positive constant. This is shown by using that $(\tilde \rho_t^\tau, \tilde \mu_t^\tau)$ and $(\bar \rho_t^\tau, \bar \mu_t^\tau)$ coincide at every $t =k \tau$ and $(\bar \rho_t^\tau, \bar \mu_t^\tau)$ are constant in each interval $(k\tau, (k+1)\tau]$. The details of these computations can be found in Section $8.3$ of \cite{S15}. 
		
		\medskip 
		
		For the pair of curves $(\hat  \rho_t^\tau, \hat \mu_t^\tau)$ defined by De Giorgi variational interpolation we have
		\begin{multline*}
			W_2 (\hat \rho_t^\tau , \tilde \rho_t^\tau) \leq W_2 (\hat \rho_{k+s}^\tau, \bar \rho_t^\tau) + W_2 ( \bar \rho_t^\tau , \tilde \rho_t^\tau)  =  W_2 (\hat \rho_{k+s}^\tau, \rho_{k+1}^\tau) + W_2 ( \bar \rho_t^\tau , \tilde \rho_t^\tau) 
			\\  \leq  W_2 (\hat \rho_{k+s}^\tau, \rho_{k}^\tau) + W_2 (\rho_{k}^\tau, \rho_{k+1}^\tau)  + W_2 ( \bar \rho_t^\tau , \tilde \rho_t^\tau)  \leq C \sqrt{\tau},
		\end{multline*} and similarly $	W_2 (\hat \mu_t^\tau , \tilde \mu_t^\tau) \leq \tilde C \sqrt{\tau}$ for some positive constants $C ,\tilde C$. This implies as $\tau \to 0$,
		\begin{align*}
			\hat \rho^\tau_t \to \rho_t  \quad &\mbox{and}\quad  \hat \mu^{\tau}_t\to  \mu_t  \quad \mbox{ uniformly in } W_2. 
		\end{align*} 	
		
		This means that the pair $(\hat  \rho_t^\tau, \hat \mu_t^\tau)$ also converges to the limit curves $(\rho_t, \mu_t)$ uniformly in $[0,T]$. Therefore we showed that the three interpolations that are defined for \eqref{defn:JKO} converge to the same limit curves $(\rho_t, \mu_t)$. 
	\end{proof}
	By Lemma \ref{lem:de_Giorgi_H^1}, we know that, for the curves obtained by the De Giorgi intepolation we have $(\hat \rho_{t}^\tau, \hat \mu_{t}^\tau) \in \mc H$, for all $t$. Then we have the following: 
	\begin{align*}
		\int_\O \hat \rho_{k+s}  |\nabla f_a (\hat \rho_{k+s}^\tau, \hat \mu_{k+s}^\tau)\vert^2 +  \int_\O \hat \mu_{k+s}  | \nabla f_b (\hat \rho_{k+s}^\tau, \hat \mu_{k+s}^\tau)\vert^2 =  \Slo (\hat \rho_{k+s}^\tau, \hat \mu_{k+s}^\tau).
	\end{align*}
	Using the above equality, \eqref{ineq:De Giorgi} re-writes as 
	\begin{multline} \label{discrete EDI} 
		F (\rho_{k+1}^{\tau}, \mu_{k+1}^{\tau})  + \frac{1}{2 \tau} W_2^2 (\rho_{k+1}^{\tau}, \rho_k^{\tau})+ \frac{1}{2 \tau}  W_2^2 (\mu_{k+1}^{\tau}, \mu _k^{\tau})  +    \frac{\tau}{2}\int_{0}^{1}\Slo(\hat \rho_{k+s}^\tau, \hat \mu_{k+s}^\tau) \d s  \leq 	F (\rho_{k}^{\tau}, \mu_{k}^{\tau}).
	\end{multline} 
	On the other hand since we have
	\begin{align*}
		\frac{W^2_2(\rho_{k+1},\rho_k)}{\tau} = \int_{k\tau}^{(k+1)\tau} \int_\Omega \tilde \rho_t^\tau \vert \tilde v_t^\tau \vert^2 \d x \d t \quad \mbox{and} \quad \frac{W^2_2(\mu_{k+1},\mu_k)}{\tau} = \int_{k\tau}^{(k+1)\tau} \int_\Omega \tilde \mu_t^\tau \vert \tilde w_t^\tau \vert^2 \d x \d t, 
	\end{align*} we can re-write \eqref{discrete EDI} as
	\begin{multline*}
		F (\rho_{k+1}^{\tau}, \mu_{k+1}^{\tau})  + \frac{1}{2 } \int_{k\tau}^{(k+1)\tau} \int_\Omega \tilde \rho_t^\tau \vert \tilde v_t^\tau \vert^2 \d x \d t  + \frac{1}{2 } \int_{k\tau}^{(k+1)\tau} \int_\Omega \tilde \mu_t^\tau \vert \tilde w_t^\tau \vert^2 \d x \d t \\  + \frac{\tau}{2}\int_{0}^{1}\Slo(\hat \rho_{k+s}^\tau, \hat \mu_{k+s}^\tau) \d s  \leq 	F (\rho_{k}^{\tau}, \mu_{k}^{\tau}).
	\end{multline*}
	and, up to a change of variable, we obtain
	\begin{multline} \label{discrete EDI 2}
		F (\rho_{k+1}^{\tau}, \mu_{k+1}^{\tau})  +\frac{1}{2 } \int_{k\tau}^{(k+1)\tau} \int_\Omega\tilde \rho_t^\tau \vert \tilde v_t^\tau \vert^2 \d x \d t  + \frac{1}{2 } \int_{k\tau}^{(k+1)\tau} \int_\Omega  \tilde \mu_t^\tau \vert \tilde w_t^\tau \vert^2 \d x \d t \\+   \frac{1}{2}\int_{k\tau}^{(k+1)\tau} \Slo(\hat \rho_{t}^\tau,\hat \mu_{t}^\tau) \d t  \leq 	F (\rho_{k}^{\tau}, \mu_{k}^{\tau}).
	\end{multline}
	Next, we prove the following two results which will be helpful in the proof of Theorem \ref{th:existence}:
	
	\begin{prop} \label{prop:F_l.s.c._t}
		Suppose that a sequence of curves $(\rho_t^n, \mu_t^n) $ satisfies $(\rho_t^n,\mu_t^n)\in \mathcal{H}$ for every $n$ and a.e. $t$, and
		\begin{itemize}
			\item $\int_0^T \Slo (\rho_t^n, \mu_t^n) \d t \leq  C$ for all $n$.
			\item $(\rho_t^n, \mu_t^n) \rightharpoonup (\rho_t, \mu_t)$ for each $t$, where $\rho_t$ and $\mu_t$ are curves in $AC_2([0,T];W_2(\Omega))$.
			\item $\int_0^T \|\sqrt{\rho_t + \mu_t}\|_{H^1(\O)}^2 \d t < +\infty$.
		\end{itemize} Then we have $(\rho_t, \mu_t) \in L^2\mathcal{H}$ and $ \Slo  (\rho_t, \mu_t)\leq \underset{n \to \infty}{\liminf  }\Slo  (\rho_t^n, \mu_t^n)$.
	\end{prop}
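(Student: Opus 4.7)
The plan is to reduce the statement to the pointwise-in-$t$ lower semi-continuity of Theorem \ref{th l.s.c.} combined with Fatou's lemma. First, Fatou applied to the hypothesis $\int_0^T \Slo(\rho_t^n,\mu_t^n)\,\d t \le C$ shows that $t \mapsto \liminf_n \Slo(\rho_t^n,\mu_t^n)$ is in $L^1(0,T)$, hence finite for a.e.\ $t$; on such a ``good'' set of times one can extract (depending on $t$) a subsequence $(n_k)=(n_k(t))$ along which the slope stays bounded. Together with the assumption $\int_0^T \|\sqrt{\rho_t+\mu_t}\|_{H^1}^2\,\d t<\infty$, which gives $\sqrt{\rho_t+\mu_t}\in H^1(\Omega)$ a.e.\ $t$, this defines a full-measure set of times on which all pointwise arguments will be carried out.

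Next, I would verify that $(\rho_t, \mu_t) \in L^2\mathcal{H}$. For each good $t$, fixing the subsequence $(n_k(t))$, Lemma \ref{lem cv B} gives a uniform bound of the form $\|\eta(\rho_t^{n_k},\mu_t^{n_k})\|_{H^1}^2 \le C_\eta\, \Slo(\rho_t^{n_k},\mu_t^{n_k})$ for any $\eta \in W^{1,\infty}_c(B)$ (after subtracting a constant so that $\eta$ is compactly supported in $B$); Proposition \ref{prop cv B} gives strong $L^2$ convergence of $\eta(\rho_t^{n_k},\mu_t^{n_k})$ to $\eta(\rho_t,\mu_t)$, and weak compactness in $H^1$ forces $\eta(\rho_t,\mu_t)\in H^1(\Omega)$. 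Combined with the a.e.\ $H^1$ regularity of $\sqrt{\rho_t+\mu_t}$ already noted, this yields $(\rho_t,\mu_t)\in \mathcal H$ a.e.\ $t$, which is property ii of Definition \ref{defn:L^2H}. The weak-$H^1$ lower semi-continuity of the norm then gives the pointwise bound $\|\eta(\rho_t,\mu_t)\|_{H^1}^2 \le C_\eta\, \liminf_n \Slo(\rho_t^n,\mu_t^n)$, and integrating in time with Fatou yields property iii; properties i and iv are given by the hypotheses.

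Finally, now that $(\rho_t,\mu_t)\in\mathcal H$ a.e., Theorem \ref{th l.s.c.} applies pointwise: along the subsequence $(n_k(t))$ both $(\rho_t^{n_k},\mu_t^{n_k})$ and $(\rho_t,\mu_t)$ lie in $\mathcal H$ with bounded slope and $(\rho_t^{n_k},\mu_t^{n_k})\rightharpoonup(\rho_t,\mu_t)$, hence
\[
\Slo(\rho_t,\mu_t)\le \liminf_{k\to\infty}\Slo(\rho_t^{n_k},\mu_t^{n_k}) = \liminf_{n\to\infty}\Slo(\rho_t^n,\mu_t^n)\qquad\text{for a.e. }t,
\]
and integrating via Fatou delivers the corresponding integrated inequality, which is what is actually needed in the sequel. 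The only mild difficulty is the $t$-dependent extraction of subsequences, but since every intermediate bound is pointwise in $t$ and Fatou's lemma does not require any compatibility between subsequences chosen at different times, this causes no measurability obstruction.
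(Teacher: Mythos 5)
Your argument is correct and follows essentially the same route as the paper's own proof: Fatou's lemma to make $\liminf_n \Slo(\rho_t^n,\mu_t^n)$ finite for a.e.\ $t$, the bound $\|\chi(\rho_t^n,\mu_t^n)\|_{H^1}^2 \lesssim \Slo(\rho_t^n,\mu_t^n)$ together with Proposition \ref{prop cv B} to identify the weak $H^1$ limit and deduce $(\rho_t,\mu_t)\in\mc H$ a.e.\ (the square-root hypothesis giving the other property), then the pointwise lower semi-continuity of Theorem \ref{th l.s.c.} plus Fatou for the slope bound and the $L^2\mathcal H$ membership. Your explicit handling of the $t$-dependent subsequence extraction is a minor (and welcome) clarification of a step the paper leaves implicit, not a different method.
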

	\begin{proof}
		First, we consider the function $t\mapsto \liminf_n \Slo  (\rho_t^n, \mu_t^n)$. Fatou's lemma implies that
		$$\int_0^T  \liminf_{n \to +\infty} \Slo  (\rho_t^n, \mu_t^n)\d t\leq \liminf_{n \to +\infty} \int_0^T  \Slo  (\rho_t^n, \mu_t^n)\d t\leq C<+\infty.$$
		In particular, the liminf of the slope is finite for a.e. $t$. If we take a function $\chi\in W_c^{1,\infty} (B)$, then we have
		\begin{equation}\label{Hslo}
			\|\chi  (\rho_t^n, \mu_t^n) \|_{H^1(\O)}^2 \lesssim   \Slo (\rho_t^n, \mu_t^n).
		\end{equation}
		Using Proposition \ref{prop cv B} we have
		\begin{align*}
			\chi  (\rho_t^n, \mu_t^n) \underset{n \to + \infty}{\longrightarrow}\chi (\rho_t, \mu_t) \quad \mbox{strongly in $L^2$ and weakly in $H^1$}.
		\end{align*} 
		Hence, we can pass to the limit as $n\to + \infty$ and deduce for a.e. $t$ that we have $\chi  (\rho_t, \mu_t) \in H^1(\O)$, i.e. Property \ref{H:prop1} for being in $\mc H$. Moreover, the assumption on $\sqrt{\rho_t+\mu_t}$ provides Property \ref{H:prop2} for a.e. $t$. Hence, we know $ (\rho_t, \mu_t)\in \mc H$ for a.e. $t$. We now use the lower semi-continuity of the slope on $\mc H$ together with Fatou's lemma to deduce 
		\begin{align*}
			\int_0^T \Slo (\rho_t, \mu_t) \d t \leq \int_0^T \liminf_{n \to + \infty} \, \Slo (\rho_t^n, \mu_t^n) \d t\leq  \liminf_{n \to + \infty}  \int_0^T \Slo  (\rho_t^n, \mu_t^n) \d t\leq C.
		\end{align*}
		Using again \eqref{Hslo} we also obtain 
		\begin{align*}
			\int_0^T \|\chi  (\rho_t^n, \mu_t^n) \|_{H^1(\O)}^2 \d t \lesssim   \int_0^T \Slo (\rho_t^n, \mu_t^n) \d t \leq C, 
		\end{align*} 
		which, combined with the $L^2$ integrability of the $H^1$ norm of the square root, provides $(\rho,\mu)\in L^2 \mc H$. 
	\end{proof}
	\begin{lemma} \label{lem:sqrt_S_L2H1}
		Let $S_t:=\rho_t+\mu_t$ where $(\rho,\mu)$ is the limit of the piecewise constant interpolation of the JKO scheme. We then have $ \int_{0}^{T} \|\sqrt{S_t}\|_{H^1(\Omega)}^2 \d t <+\infty$.
	\end{lemma}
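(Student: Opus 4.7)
The plan is to iterate Lemma \ref{lem:energy_estimate} along the JKO scheme and telescope. At each step $k$, applying the lemma with $(\rho_0, \mu_0) = (\rho_k^\tau, \mu_k^\tau)$ and $(\rho_1, \mu_1) = (\rho_{k+1}^\tau, \mu_{k+1}^\tau)$ gives
\begin{equation*}
G(\rho_k^\tau, \mu_k^\tau) - G(\rho_{k+1}^\tau, \mu_{k+1}^\tau) \;\geq\; r_0\, \tau \int_\Omega \frac{|\nabla S_{k+1}^\tau|^2}{S_{k+1}^\tau}\, \d x,
\end{equation*}
where $S_{k+1}^\tau = \rho_{k+1}^\tau + \mu_{k+1}^\tau$. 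Summing over $k=0,\dots,N-1$ makes the left-hand side telescope, and since $G$ is bounded from below on probability measures on the bounded domain $\Omega$, we get
\begin{equation*}
r_0\, \tau \sum_{k=0}^{N-1} \int_\Omega \frac{|\nabla S_{k+1}^\tau|^2}{S_{k+1}^\tau}\, \d x \;\leq\; G(\rho_0, \mu_0) - \inf G \;\leq\; C.
\end{equation*}

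In terms of the piecewise constant interpolation $\bar S^\tau_t := \bar\rho^\tau_t + \bar\mu^\tau_t = S^\tau_{k+1}$ on $(k\tau,(k+1)\tau]$, and using the identity $\int \frac{|\nabla S|^2}{S} = 4\int |\nabla\sqrt{S}|^2$, this rewrites as
\begin{equation*}
4\int_0^T \int_\Omega |\nabla \sqrt{\bar S^\tau_t}|^2 \,\d x\, \d t \;\leq\; \frac{C}{r_0}.
\end{equation*}
Together with the trivial bound $\int_\Omega \bar S^\tau_t = 2$, this yields a uniform bound on $\sqrt{\bar S^\tau}$ in $L^2([0,T]; H^1(\Omega))$, independently of $\tau$.

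To conclude for the limit curve, I would pass pointwise in $t$. By Lemma \ref{lem:limiti curve}, for every $t \in [0,T]$ we have $\bar \rho^\tau_t \rightharpoonup \rho_t$ and $\bar \mu^\tau_t \rightharpoonup \mu_t$ weakly, hence $\bar S^\tau_t \rightharpoonup S_t$ weakly. The functional $u \mapsto \|\sqrt{u}\|_{H^1(\Omega)}^2$ on nonnegative measures of bounded mass is lower semi-continuous for the weak convergence: indeed, along a subsequence realizing the liminf one has a bounded sequence $\sqrt{\bar S^\tau_t}$ in $H^1$, so a weak $H^1$ limit $W$ exists along a further subsequence; Rellich compactness upgrades this to strong $L^2$ convergence, forcing $W^2 = S_t$ a.e., i.e. $W = \sqrt{S_t}$, and lower semi-continuity of the $H^1$ norm under weak convergence gives $\|\sqrt{S_t}\|_{H^1}^2 \leq \liminf_\tau \|\sqrt{\bar S^\tau_t}\|_{H^1}^2$.

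Finally, applying Fatou's lemma in the time variable,
\begin{equation*}
\int_0^T \|\sqrt{S_t}\|_{H^1(\Omega)}^2\, \d t \;\leq\; \int_0^T \liminf_{\tau\to 0} \|\sqrt{\bar S^\tau_t}\|_{H^1(\Omega)}^2\, \d t \;\leq\; \liminf_{\tau\to 0}\int_0^T \|\sqrt{\bar S^\tau_t}\|_{H^1(\Omega)}^2\, \d t \;\leq\; C',
\end{equation*}
which is the desired inequality. The only real work is the pointwise lower semi-continuity argument; everything else is essentially a direct telescoping plus Fatou. The main subtlety to double-check is that $G$ is indeed bounded below on probability densities on $\Omega$ (which follows from Jensen or the elementary inequality $a\log a \geq -1/e$), and that the $W_2$-uniform convergence of the piecewise constant interpolation implies the weak convergence of $\bar S^\tau_t$ for \emph{every} $t$, not merely almost every $t$, so that the argument above applies pointwise.
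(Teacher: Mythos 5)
Your proof is correct and follows essentially the same route as the paper: iterate the flow-interchange estimate of Lemma \ref{lem:energy_estimate}, telescope using the lower bound on $G$, obtain a uniform $L^2_t H^1_x$ bound on $\sqrt{\bar S^\tau}$, and pass to the limit by lower semi-continuity. The only (harmless) difference is that you perform the semi-continuity pointwise in $t$ (weak $H^1$ compactness plus Rellich to identify the limit as $\sqrt{S_t}$) and then apply Fatou, whereas the paper invokes semi-continuity of the space-time functional $\int_0^T\int_\Omega |\nabla \bar S^\tau|^2/\bar S^\tau$ directly and keeps $G(\rho_T,\mu_T)$ rather than $\inf G$ on the right-hand side.
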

	\begin{proof} We iterate the estimate of Lemma \ref{lem:energy_estimate}, which guarantees 
		\begin{align*}
			G (\rho_k^\tau, \mu_k^\tau) - G(\rho_{k+1}^\tau, \mu_{k+1}^\tau) \geq r_0 \tau \int_\O \frac{|\nabla S_{k+1}^\tau|^2}{S_{k+1}^\tau},
		\end{align*}
		where $S_k^\tau:=\rho_k^\tau+\mu_k^\tau$, and where $G$ is defined in Lemma \ref{lem:energy_estimate}. Summing over $k$, we obtain, for $N=T/\tau$,
		$$
		G (\rho_0, \mu_0) \geq  G(\rho_{N}^\tau, \mu_N^\tau)+ r_0 \int_0^T \int_\O \frac{|\nabla \bar S^\tau|^2}{\bar S^\tau}, 
		$$ where $\bar S^\tau := \bar \rho^\tau + \bar \mu^\tau$, and $\bar \rho^\tau, \bar \mu^\tau$ are the curves obtained from the piecewise constant interpolation. It is then enough to pass to the limit $\tau\to 0$ and apply the semi-continuity of the terms on the right hand side above to obtain 
		$$r_0\int_0^T\int_\Omega \frac{|\nabla \bar S|^2}{\bar S}\leq G(\rho_0,\mu_0)-G(\rho_T,\mu_T),$$
		which proves the claim since $|\nabla \sqrt{S}|^2=\frac{|\nabla S|^2}{4S}$.
	\end{proof}
	
	Now we have the necessary tools to prove Theorem \ref{th:existence}. 
	\begin{proof}[Proof of Theorem \ref{th:existence}]
		Let $(\hat \rho_t^{\tau}, \hat \mu_t^{\tau})$, $(\bar \rho_t^\tau, \bar \mu_t^\tau )$ and  $(\tilde \rho_t^\tau, \tilde \mu_t^\tau)$ be the curves obtained by the De Giorgi variational interpolation, the piecewise constant interpolation and the geodesic interpolation respectively. 
		Summing \eqref{discrete EDI 2} over $k$ we obtain 
		\begin{align} \label{discrete EDI 3}
			F (\rho_{T}^{\tau}, \mu_{T}^{\tau}) + \frac{1}{2 } \int_{0}^{T}\int_\O\tilde \rho_t^\tau \vert   \tilde{v_t}^\tau\vert^2 \d x \d t + \frac{1}{2 }  \int_{0}^{T}\int_\O  \tilde \mu_t^\tau \vert \tilde{w_t}^\tau\vert^2  \d x \d t + \frac{1}{2}\int_{0}^{T}\Slo(\hat \rho_t^\tau, \hat \mu_t^\tau)  \d t \leq 	F (\rho_{0}^{\tau}, \mu_{0}^{\tau}).
		\end{align} 
		
		Lemma \ref{lem:limiti curve} gives that the curves obtained by the three interpolations, defined above, for the JKO scheme \eqref{defn:JKO}, converge to the same limit curve $(\rho_t, \mu_t)$. Lemma \ref{lem:sqrt_S_L2H1} uses the convergence of the piecewise constant interpolation to deduce $\sqrt{\rho_t+\mu_t}\in L^2_tH^1_x$ and, together with \eqref{discrete EDI 3} (which provides the uniform bound on $\Slo$) and Proposition \ref{prop:F_l.s.c._t} we obtain $(\rho,\mu)\in L^2\mc H$ and $\int_0^T \Slo(\rho_t, \mu_t) \leq \liminf_\tau \int_0^T \Slo(\hat \rho_t^\tau, \hat \mu_t^\tau) $.

		Let us now look at the momentum variables $\tilde E_t$. We consider the Benamou-Brenier functional 
		$$\mc B(\rho,E):=\begin{cases}
			\int_\O \rho|v|^2 \d x &\mbox{ if }E=\rho v,v\in L^2(\rho),\\
			+\infty & \mbox{ otherwise}.
		\end{cases}$$
		This functional, see Chapter 5 in \cite{S15}, is lower semi-continuous for the weak convergence of both variables. Hence, we can write 
		$$\frac{1}{2 } \int_{0}^{T}\int_\O\tilde \rho_t^\tau \vert   \tilde{v_t}^\tau\vert^2 \d x \d t + \frac{1}{2 }  \int_{0}^{T}\int_\O  \tilde \mu_t^\tau \vert \tilde{w_t}^\tau\vert^2 \d x \d t = \mc B(\tilde\rho^\tau,\tilde E_\rho^\tau)+\mc B(\tilde\mu^\tau,\tilde E_\mu^\tau),$$
		and pass to the limit as $\tau\to 0$, thus deducing $E_\rho=\rho v$ with $v\in L^2(\rho)$ and $E_\mu=\mu w$ with $w\in L^2(\mu)$ (all these integrabilities being meant in space-time) and 
		$$\frac{1}{2 } \int_{0}^{T}\int_\O \rho_t \vert  {v_t}\vert^2 \d x \d t + \frac{1}{2 }  \int_{0}^{T}\int_\O   \mu_t \vert {w_t}\vert^2 \d x \d t \leq \liminf_{\tau \to 0} \frac{1}{2 } \int_{0}^{T}\int_\O\tilde \rho_t^\tau \vert   \tilde{v_t}^\tau\vert^2 \d x \d t + \frac{1}{2 }  \int_{0}^{T}\int_\O  \tilde \mu_t^\tau \vert \tilde{w_t}^\tau\vert^2 \d x \d t. $$
		We then combine this with the lower semi-continuity of $F$ which is implied by the convexity of $f$, and we see that the limit curves $(\rho,\mu)$ together with the velocity fields $(v,w)$ is indeed an EDI solution of our PDE. 
	\end{proof}
	
	\paragraph{An additional estimate.}
	
	We conclude this section by showing an additional estimate on $S$, which is actually not needed for our analysis.
	\begin{lemma}  \label{coroll_extra}
		Assume that the problem we consider is in dimension 
		$1$ or $2$, that is, $\Omega \subset \R^d$ with $d=1$ or $d=2$. Then, for $T>0$, we have $ \int_{0}^{T} \|S_t\|_{L^2(\Omega)}^2 \d t < + \infty$.
	\end{lemma}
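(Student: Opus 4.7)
The plan is to deduce the claim from Lemma \ref{lem:sqrt_S_L2H1}, which already gives $\sqrt{S_t}\in L^2_tH^1_x$, combined with the Sobolev (resp.\ Gagliardo--Nirenberg) embedding that becomes favorable precisely in dimensions $d=1,2$, and with the fact that $\|S_t\|_{L^1(\Omega)}=2$ for all $t$ because $\rho_t$ and $\mu_t$ are probability measures. The key observation is that $\|S_t\|_{L^2(\Omega)}^2=\|\sqrt{S_t}\|_{L^4(\Omega)}^4$, so controlling $L^2$ of $S$ amounts to controlling $L^4$ of $\sqrt{S}$, which is exactly the kind of estimate one obtains from $H^1$ in low dimension.

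I would treat the two dimensions separately. In dimension $d=2$ the strategy is to apply the Ladyzhenskaya/Gagliardo--Nirenberg inequality
\[
\|u\|_{L^4(\Omega)}^4\leq C\,\|u\|_{L^2(\Omega)}^2\,\|u\|_{H^1(\Omega)}^2,
\]
with $u=\sqrt{S_t}$. Since $\|\sqrt{S_t}\|_{L^2(\Omega)}^2=\|S_t\|_{L^1(\Omega)}=2$, this yields
\[
\|S_t\|_{L^2(\Omega)}^2=\|\sqrt{S_t}\|_{L^4(\Omega)}^4\leq 2C\,\|\sqrt{S_t}\|_{H^1(\Omega)}^2,
\]
and integrating in $t$ gives the result thanks to Lemma \ref{lem:sqrt_S_L2H1}. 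In dimension $d=1$ one uses the stronger embedding $H^1(\Omega)\hookrightarrow L^\infty(\Omega)$, so that
\[
\|S_t\|_{L^\infty(\Omega)}=\|\sqrt{S_t}\|_{L^\infty(\Omega)}^2\leq C\,\|\sqrt{S_t}\|_{H^1(\Omega)}^2,
\]
and interpolating via $\|S_t\|_{L^2(\Omega)}^2\leq\|S_t\|_{L^1(\Omega)}\|S_t\|_{L^\infty(\Omega)}=2\|S_t\|_{L^\infty(\Omega)}$ leads to the same bound.

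There is no real obstacle: the only point requiring a (minor) comment is that $\|\sqrt{S_t}\|_{L^2(\Omega)}$ stays bounded uniformly in $t$ because $\rho_t,\mu_t$ are probability measures, which is what enables one to convert the $L^2_tH^1_x$ integrability into an $L^2_tL^2_x$ integrability of $S_t$. In higher dimensions $d\geq 3$ the same reasoning would fail since $H^1\not\hookrightarrow L^4$, and one could at best get $L^2_tL^{d/(d-2)}_x$ on $S$, which explains the dimensional restriction in the statement.
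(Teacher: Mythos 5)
Your proof is correct, and for $d=1$ it coincides with the paper's argument ($H^1\hookrightarrow L^\infty$ plus the interpolation $\|S_t\|_{L^2}^2\leq\|S_t\|_{L^1}\|S_t\|_{L^\infty}$ and the conserved mass $\|S_t\|_{L^1}=2$). For $d=2$, however, you take a genuinely different and leaner route. The paper does not use Ladyzhenskaya/Gagliardo--Nirenberg: it writes $\|S_t\|_{L^2}^2=\int S_t h(t)\cdot S_t/h(t)$ with $h(t)=\|\sqrt{S_t}\|_{H^1}^2$, applies the Legendre duality $ab\leq b(\log b-c)+e^{a+c-1}$, invokes the Moser--Trudinger inequality for $u=\sqrt{S_t/h(t)}$, and crucially needs the uniform entropy bound $\int_\Omega S_t\log S_t\leq C$, which it extracts from the energy decay $F(\rho_t,\mu_t)\leq F(\rho_0,\mu_0)$ via $F\geq G$ and convexity. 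Your argument replaces all of this by the single interpolation inequality $\|u\|_{L^4(\Omega)}^4\leq C\|u\|_{L^2(\Omega)}^2\|u\|_{H^1(\Omega)}^2$ (valid on a bounded Lipschitz domain in $\R^2$, with the full $H^1$ norm) applied to $u=\sqrt{S_t}$, using only $\|\sqrt{S_t}\|_{L^2}^2=\|S_t\|_{L^1}=2$; the time integrability then follows directly from Lemma \ref{lem:sqrt_S_L2H1}, exactly as in the paper. So your proof is shorter and uses strictly weaker information (mass instead of mass plus entropy), while the paper's duality/Moser--Trudinger computation is a heavier tool that happens to also exploit the entropy bound available along the flow; both hinge on the same key estimate $\sqrt{S}\in L^2_tH^1_x$, and your closing remark about the failure for $d\geq 3$ matches the reason for the dimensional restriction in the statement.
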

	\begin{proof} We treat the cases $d=1$ and $d=2$ separately. 
		
		By Lemma \ref{lem:sqrt_S_L2H1}, we have that $\sqrt{S} \in L^2_t H_x^1$ and this implies $\sqrt{S} \in L^2_t L_x^{\infty}$, thus  $S \in L_t^1 L_x^{\infty}$. Moreover, since $S=\rho+\mu$ and $\rho$ and $\mu$ have unit mass, we also have $S \in L_t^\infty L^1_x$.
		Then, we have 
		\begin{align*}
			\int_0^T  \|S_t\|^2_{L^2(\O)} \d t = \int_0^T \int_{\Omega} |S_t(x)|^2 \d x \d t  \leq  	 \int_0^T \|S_t\|_{L^\infty (\Omega)} \|S_t\|_{L^1(\Omega)} \d t < + \infty.
		\end{align*} This proves the result for $d=1$. 
		
		\medskip 
		Now, we consider a function $\phi(b) = b \left(\log b -c \right)$, and notice that the Legendre transform $\phi^*$ of $\phi$ is given by $\phi^*(a) = e^{a+c-1}$. Then we have the following inequality for every $a,b, c$ (with $b>0$):
		\begin{align}  \label{extra_Legendre}
			b  (\log b - c) + e^{a+c-1} \geq ab.
		\end{align}
		Let us define $h(t) := \|\sqrt{S}_t\|^2_{H^1(\Omega)}  < +\infty$ and notice that $h(t)\in L^1([0,T])$. Taking $b(t,x) = S_t(x) h(t)$ and $a(t,x) = \frac{S_t(x)}{h(t)}$, together with a function $c=c(t)$ to be chosen in \eqref{extra_Legendre}, we obtain
		\begin{multline} \label{extra_est1}
			\int_0^T \| S_t\|_{L^2(\Omega)}^2 \d t = \int_0^T \int_{\Omega} \vert S_t(x)\vert^2 \d x \d t = 	\int_0^T \int_{\Omega} S_t(x) h(t) \frac{S_t(x)}{h(t)}  \d x \d t 
			\\ \leq \int_0^T \int_{\Omega}  S_t(x) h(t) \left ( \log (S_t(x) h (t))  - c(t) \right  ) + e^{\frac{S_t(x)}{h(t)}+c(t)-1}  \d x \d t .
		\end{multline} 
		Now let us recall the Moser-Trudinger inequality in dimension $d=2$. There exist positive constants $\alpha  $ and $C$ such that for every $u \in H^1 (\Omega)$ with $\|u\|_{H^1(\Omega)} \leq 1$ we have
		\begin{align} \label{extra_MT}
			\int_{\Omega} e^{\alpha|u|^2 } \d x \leq C.
		\end{align}
		The sharp value of the constant $ \alpha$ is  $ 4 \pi$, but we will just use $\alpha=1$. Taking $c (t)= \log h(t)$  and denoting $u (t,x)= \sqrt{\frac{S_t(x)}{h(t)}}$ in \eqref{extra_est1} we obtain that
		\begin{align*} 
			\int_0^T \| S_t \|^2_{L^2(\O)} \d t \leq \int_0^T h(t)\int_{\Omega} S_t(x) \log (S_t(x)) \d x \d t  + e^{-1}\int_0^T h(t) \d t \int_{\Omega}  e^{u^2} \d x  \lesssim \| h \|_{L^1([0,T])} < +\infty.
		\end{align*}
		The very last inequality also relies on the fact that $S$ has uniformly bounded entropy, i.e. $\int_\O S_t\log S_t \d x\leq C$. This is a consequence of the bound on $F(\rho_t,\mu_t)\leq F(\rho_0,\mu_0)$. Using $F\geq G$ we have a uniform bound on $\int \rho\log\rho+\mu\log\mu$ and, by convexity, on $\int \frac S2\log(\frac S2)$, which in turn gives a bound on the entropy of $S$. This gives the result for $d=2$ and finishes the proof.
	\end{proof}

	\section{Differentiation properties}
	\label{sec:differentiation_properties}
	The goal of this section is to prove a statement similar to the following one:
	
	\medskip 
	
	{\it Let $(\rho,\mu)$ be a curve in $L^2\mathcal H$ (see Def. \ref{defn:L^2H}) and let $v$ and $w$ be two velocity fields for $\rho$ and $\mu$, respectively, i.e. we have $\partial_t\rho+\nabla\cdot(\rho v)=0$ and $\partial_t\mu+\nabla\cdot(\mu w)=0$. 
		
		If $g:\R^2_+\to\R$ is a ``nice enough'' function, we have
		\begin{equation}\label{goaleq}
			\int_\Omega g(\rho_T,\mu_T)=\int_\Omega g(\rho_0,\mu_0)+\int_0^T\int_\O \rho_t\nabla g_a(\rho_t,\mu_t)\cdot v_t +\mu_t \nabla g_b(\rho_t,\mu_t)\cdot w_t.
		\end{equation}
		
		In particular, we would like this to be true for $g=f$ and for $(\rho,\mu)$ the solution that we found in Section \ref{sec:EDI}.
	}

	\medskip
	
	The main idea behind the above computation is that \eqref{goaleq} holds if the densities of $\rho$ and $\mu$ are smooth, by just using a differentiation under the integral sign and an integration by parts. Hence, we will prove the result by regularization, relying on a suitable convolution kernel (in space only). We will suppose that $\O$ is either the torus or a regular cube. In the second case, after symmetrizing, the functions $\rho$ and $\mu$ can be extended by periodicity and it is exactly as if $\O$ were the torus.
	
	We first choose a convolution kernel $\eta>0$ with $\int \eta=1$ (some assumptions on it will be made precise later) and define $\eta_\varepsilon$ to be its rescaled version $\eta_\varepsilon(z) :=\varepsilon^{-d}\eta(z/\varepsilon)$. We define $\rho_\ve$ and $v_\ve$ via
	\begin{align} \label{convolved}
		\rho_\ve(t,\cdot)=\eta_\ve*\rho_t, \quad \mbox{and } \quad (\rho_\ve v_\ve)(t,\cdot)=\eta_\ve*(\rho_t v_t).
	\end{align}
	Analogously, we define $\mu_\ve$ and $w_\ve$ with the same convolution kernel.
	
	\medskip
	
	We first observe the following property.
	
	\begin{lemma}
		If $\rho\in L^1([0,T]\times\O)$ and we have $\int_0^T\int_\O \rho|v|^2<+\infty$, and $\rho_\ve$ and $v_\ve$ are defined as \eqref{convolved}, then we have $\sqrt{\rho_\ve}v_\ve\to \sqrt{\rho}v$ in $L^2([0,T]\times\O)$.
	\end{lemma}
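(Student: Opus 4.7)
The plan is to exploit the Benamou--Brenier-type convexity inequality applied pointwise to the convolution. First, since $\int\rho|v|^2 <+\infty$ and $\rho\in L^1$, Cauchy--Schwarz gives $\rho|v|\in L^1$, so $E := \rho v\in L^1([0,T]\times\Omega)$ and $E_\ve := \eta_\ve * E$ is well defined. Writing $E(y)=\sqrt{\rho(y)}\cdot \sqrt{\rho(y)}\,v(y)$ and applying the Cauchy--Schwarz inequality with the probability measure $\eta_\ve(x-y)\d y$, I would obtain the pointwise inequality
\begin{equation*}
  |E_\ve(x)|^2 \leq \rho_\ve(x)\cdot \bigl(\eta_\ve*(\rho|v|^2)\bigr)(x),
\end{equation*}
i.e. $|\sqrt{\rho_\ve}v_\ve|^2 \leq \eta_\ve*(\rho|v|^2)$ on $\{\rho_\ve>0\}$, with both sides vanishing on $\{\rho_\ve=0\}$. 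In particular $\sqrt{\rho_\ve}v_\ve$ is uniformly bounded in $L^2([0,T]\times\Omega)$.

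Next, I would extract a weak $L^2$ subsequential limit $U$ of $\sqrt{\rho_\ve}v_\ve$. To identify $U$, I observe that $\rho_\ve\to\rho$ in $L^1$ (standard convolution), and the elementary inequality $(\sqrt{a}-\sqrt{b})^2\leq|a-b|$ promotes this to $\sqrt{\rho_\ve}\to\sqrt{\rho}$ strongly in $L^2$. Combined with the weak $L^2$ convergence of $\sqrt{\rho_\ve}v_\ve$, this yields $E_\ve = \sqrt{\rho_\ve}\cdot\sqrt{\rho_\ve}v_\ve \rightharpoonup \sqrt{\rho}\,U$ in $L^1$ (or at least in distributions). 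But $E_\ve \to E = \rho v$ in $L^1$ by standard convolution of $L^1$ functions, so $\sqrt{\rho}\,U = \rho v$ a.e., which forces $U=\sqrt{\rho}\,v$ on $\{\rho>0\}$, and of course $U=0$ a.e. on $\{\rho=0\}$ by the pointwise bound passed to the limit.

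Finally, to upgrade weak to strong convergence, I would use the standard Hilbert-space principle that weak convergence plus convergence of norms implies strong convergence. The pointwise inequality from Step~1 together with the $L^1$ convergence $\eta_\ve*(\rho|v|^2) \to \rho|v|^2$ gives
\begin{equation*}
  \limsup_{\ve\to 0}\int_0^T\!\!\int_\Omega |\sqrt{\rho_\ve}v_\ve|^2 \leq \int_0^T\!\!\int_\Omega \rho|v|^2 = \int_0^T\!\!\int_\Omega |\sqrt{\rho}\,v|^2,
\end{equation*}
while the lower semi-continuity of the $L^2$-norm under weak convergence gives the reverse inequality for the liminf. Hence $\|\sqrt{\rho_\ve}v_\ve\|_{L^2}\to\|\sqrt{\rho}\,v\|_{L^2}$, and together with the weak convergence identified above we conclude strong $L^2$ convergence along the subsequence. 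Since the limit is uniquely characterized, the whole family converges. The only mildly delicate point is the identification of the limit $U$, which requires the strong $L^2$ convergence of $\sqrt{\rho_\ve}$ rather than merely the $L^1$ convergence of $\rho_\ve$; the rest of the argument is a Benamou--Brenier-style convexity computation that is by now standard in the Wasserstein gradient flow literature.
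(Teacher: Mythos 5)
Your proof is correct, and it follows the same overall skeleton as the paper's — a uniform $L^2$ bound coming from the convexity of $(\rho,E)\mapsto |E|^2/\rho$ under convolution, followed by the Hilbertian principle that weak convergence plus convergence of norms yields strong convergence — but the identification of the limit is carried out by a genuinely different mechanism. The paper cites the inequality $\int\rho_\ve|v_\ve|^2\le\int\rho|v|^2$ from \cite{S15}, asserts the pointwise a.e. convergence of $\sqrt{\rho_\ve}v_\ve$ to $\sqrt{\rho}v$ on $\{\rho>0\}$ (from a.e. convergence of $\rho_\ve$ and $\rho_\ve v_\ve$), and then applies its auxiliary Lemma \ref{pointwiseweak}, whose second part packages exactly the implication ``limsup of the norms plus pointwise convergence on the support $\Rightarrow$ strong convergence''. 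You instead prove the sharper pointwise Cauchy--Schwarz bound $|\sqrt{\rho_\ve}v_\ve|^2\le \eta_\ve*(\rho|v|^2)$, identify the weak $L^2$ limit $U$ through the strong $L^2$ convergence of $\sqrt{\rho_\ve}$ (via $(\sqrt a-\sqrt b)^2\le|a-b|$) combined with the $L^1$ convergence of the momenta $E_\ve\to\rho v$, and dispose of $\{\rho=0\}$ by your pointwise bound; the latter step is stated tersely but is sound, since for any $S\subset\{\rho=0\}$ one has $\int_S|\sqrt{\rho_\ve}v_\ve|^2\le\int_S\eta_\ve*(\rho|v|^2)\to\int_S\rho|v|^2=0$, whence $U=0$ a.e.\ on $S$ by lower semi-continuity of the norm. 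What your route buys: it is self-contained (no citation for the convexity bound, no auxiliary lemma) and it avoids any pointwise a.e.\ convergence of $v_\ve$, which the paper dismisses as ``standard properties of the convolution'' but which, strictly speaking, requires a Lebesgue-point argument or a subsequence; what the paper's route buys is brevity, since Lemma \ref{pointwiseweak} isolates the abstract principle and makes the proof of the present statement three lines. One cosmetic remark: the convolutions here are in space only, so the $L^1([0,T]\times\Omega)$ convergences you use ($\rho_\ve\to\rho$, $E_\ve\to\rho v$, $\eta_\ve*(\rho|v|^2)\to\rho|v|^2$) should be justified slice-by-slice in time together with dominated convergence in $t$; this is routine and does not affect the argument.
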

	
	\begin{proof}
		It is well known (see Chapter 5 in \cite{S15}) that we have 
		$$\int_0^T\int_\O \rho_\ve|v_\ve|^2\leq \int_0^T\int_\O \rho|v|^2,$$
		which proves that $\sqrt{\rho_\ve}v_\ve$ is bounded in $L^2$. Moreover, it is clear that its pointwise limit is $\sqrt{\rho}v$ on the set $\{\rho>0\}$ as a consequence of the standard properties of the convolution and of $\rho_\ve\to \rho$ and $\rho_\ve v_\ve\to \rho v$. We then use Lemma \ref{pointwiseweak} below to deduce the strong $L^2$ convergence.
	\end{proof}
	
	\begin{lemma}\label{pointwiseweak}
		Suppose that a sequence $u_n\in L^2(X;\R^d)$ weakly converges in $L^2$ to a function $v$,  and that we have $u_n(x)\to u(x)$ for a.e. $x\in A\subset X$. Then we have $u=v$ on $A$.
		
		Suppose that a sequence $u_n$ satisfies $\limsup_n \int |u_n|^2 \leq \int |u|^2$ and $u_n(x)\to u(x)$ for a.e. $x\in A$ with $A=\{u\neq 0\}$; then we have $u_n\to u$ in $L^2(X)$.
	\end{lemma}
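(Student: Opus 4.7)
The plan is to treat the two statements separately, the first by Egoroff's theorem and the second by combining the first with a Radon--Riesz type argument in Hilbert space.

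For the first statement, I would fix an arbitrary measurable subset $A'\subset A$ of finite measure and apply Egoroff's theorem: for each $\delta>0$ there exists $E\subset A'$ with $|E|<\delta$ such that $u_n\to u$ uniformly on $A'\setminus E$. Since $A'\setminus E$ has finite measure, uniform convergence upgrades to $L^2$ convergence of $u_n\ind_{A'\setminus E}$ to $u\ind_{A'\setminus E}$, which is in particular a weak limit. But from the hypothesis $u_n\rightharpoonup v$ in $L^2(X)$ we also have $u_n\ind_{A'\setminus E}\rightharpoonup v\ind_{A'\setminus E}$. By uniqueness of weak limits, $u=v$ a.e.\ on $A'\setminus E$. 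Sending $\delta\to 0$ and exhausting $A$ by sets of finite measure (any domain of interest here is $\sigma$-finite) yields $u=v$ a.e.\ on $A$.

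For the second statement, the hypothesis $\limsup_n \int|u_n|^2\leq \int|u|^2$ ensures $(u_n)$ is bounded in $L^2(X)$, so up to extraction $u_n\rightharpoonup \tilde u$ weakly in $L^2(X)$. The first part applied with $v=\tilde u$ gives $\tilde u=u$ a.e.\ on $A=\{u\neq 0\}$. By weak lower semi-continuity of the norm,
\[
\|\tilde u\|_2^2 \;\leq\; \liminf_n \|u_n\|_2^2 \;\leq\; \limsup_n \|u_n\|_2^2 \;\leq\; \|u\|_2^2.
\]
Now $u\equiv 0$ on $A^c$ and $\tilde u=u$ on $A$, so $\int_A|\tilde u|^2=\|u\|_2^2$, which already saturates the right-hand side; consequently $\int_{A^c}|\tilde u|^2=0$, i.e.\ $\tilde u=0=u$ on $A^c$ as well. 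Hence $\tilde u=u$ a.e.\ and moreover $\|u_n\|_2\to\|u\|_2$. In a Hilbert space, weak convergence together with convergence of norms implies strong convergence, so $u_n\to u$ in $L^2(X)$ along the extracted subsequence. Since the weak limit is forced to be $u$ regardless of the subsequence, the standard subsequence argument shows that the full sequence converges strongly.

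The only mildly delicate point is that in the second part a.e.\ convergence is assumed only on $A$, so one cannot directly invoke Brezis--Lieb or pointwise Fatou on the complement; this is precisely why the norm-inequality hypothesis is needed to rule out any ``leakage'' of mass of $\tilde u$ onto $A^c$. Beyond this small observation, the argument is essentially routine.
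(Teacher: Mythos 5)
Your proof is correct. The second half is essentially the paper's own argument: extract a weakly convergent subsequence, identify the weak limit with $u$ on $A$ via the first part, use the chain $\|\tilde u\|_2^2\leq\liminf\|u_n\|_2^2\leq\limsup\|u_n\|_2^2\leq\|u\|_2^2$ together with $u=0$ on $A^c$ to force $\tilde u=u$ a.e.\ and convergence of norms, and conclude by weak convergence plus norm convergence and subsequence uniqueness. The first half, however, takes a genuinely different route: the paper never invokes Egoroff here, but instead tests against a bounded $\phi$ supported in $A$, truncates $u_n$ at level $R$ with the projection $\pi_R$, passes to the limit in the truncated terms by dominated convergence, and controls the tails $\int|u_n-\pi_R(u_n)||\phi|$ by a Chebyshev-type estimate $\leq \|\phi\|_{L^\infty}\|u_n\|_{L^2}^2/R$, letting $R\to\infty$ at the end. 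Your Egoroff-plus-indicator argument (which is in fact the same device the paper uses in the proof of Lemma~\ref{lemma affine}) is equally valid; it trades the explicit use of the uniform $L^2$ bound for an exhaustion of $A$ by finite-measure sets, so it implicitly needs $\sigma$-finiteness, which is harmless here since the lemma is applied on $X=[0,T]\times\O$ of finite measure, while the paper's truncation argument localizes in the range rather than in the domain and uses the $L^2$ bound directly. Your closing remark about why the norm hypothesis is needed to prevent mass of the weak limit on $A^c$ is exactly the point of the paper's string of equalities.
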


	\begin{proof}
		Let $\phi\in L^\infty(X)$ be a test function vanishing on $A^c$. We have $\int u_n \cdot \phi\to \int v\cdot \phi$ because of weak convergence. Yet, for an arbitrary $R>0$, if we denote by $\pi_R$ the projection onto the closed ball of radius $R$, we also have $\int \pi_R(u_n)\cdot\phi\to\int\pi_R(u)\cdot\phi$ because of dominated convergence. Moreover 
		\begin{align*}
			\int_X |u_n-\pi_R(u_n)| \vert \phi \vert\leq ||\phi||_{L^\infty}\int_{\{|u_n|>R\}} |u_n|\leq \frac{||\phi||_{L^\infty}||u_n||_{L^2}^2}{R}\leq \frac{C}{R},
		\end{align*}
		
		\begin{align*}
			\int_X |u-\pi_R(u)| \vert \phi\vert\leq \frac{C}{R}.
		\end{align*} Using
		$$\int_X (u_n-u)\cdot\phi\leq \int_X |u_n-\pi_R(u_n)||\phi|+\int_X |u-\pi_R(u)||\phi|+\int_X (\pi_R(u)-\pi_R(u_n))\cdot\phi,$$
		we obtain
		$$\left|\limsup_{n} \int_X (u_n-u)\cdot\phi\right|\leq \frac{2C}{R},
		$$
		i.e. $\lim_n \int_X (u_n-u)\cdot\phi=0$ since $R$ is arbitrary. We then obtain $\int (u-v)\cdot\phi=0$ for any $L^\infty$ function $\phi$ vanishing outside $A$, i.e. $u=v$ a.e. on $A$.
		
		\medskip 
		
		For the second part of the statement, since $u_n$ is bounded in $L^2$ we first extract a subsequence weakly converging to some $v\in L^2$. From the previous part of the claim and our assumptions, we know that $v=u$ on $A=\{u\neq 0\}$. We then use
		$$\int_X |v|^2\leq \liminf_n \int_X |u_n|^2\leq \limsup_n \int_X |u_n|^2\leq\int_X |u|^2=\int_A |u|^2=\int_A |v|^2\leq \int_X |v|^2.$$
		Since all the inequalities must be equalities, we deduce $v=0=u$ on $A^c$ and hence $u=v$ a.e. on $X$, as well as $||u_n||_{L^2}\to ||u||_{L^2}$. Thus, $u_n$ weakly converges to $u$ with convergence of the norm, which implies strong convergence. The limit does not depend on the subsequence we extracted, so it holds on the full sequence.
	\end{proof}
	
	With this convergence result in mind we can first prove the following:
	\begin{prop}
		Equality \eqref{goaleq} holds when $g(a,b)=\tilde f(a+b)$, provided that $\tilde f$ (which is a priori only defined on $[2,+\infty)$) is extended to a function bounded from below on $\R_+$ satisfying $0\leq\tilde f''(s)\leq C/s$.
	\end{prop}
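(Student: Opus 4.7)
The plan is to regularize by spatial convolution, apply the classical chain rule to the smoothed continuity equations, and pass to the limit. Let $\eta_\ve$ be a smooth mollifier and define $\rho_\ve,\mu_\ve,v_\ve,w_\ve$ by \eqref{convolved}. Since convolution commutes with the continuity equation, $S_\ve:=\rho_\ve+\mu_\ve$ satisfies $\partial_t S_\ve=-\nabla\cdot(\rho_\ve v_\ve+\mu_\ve w_\ve)$ classically and is $C^\infty$ in space. Differentiating $t\mapsto\int_\O\tilde f(S_\ve(t))\d x$, integrating by parts (using periodicity, after symmetric extension if $\O$ is a cube), and integrating in time yields the clean identity
\begin{equation}\label{smoothch-plan}
\int_\O \tilde f(S_\ve(T))\d x-\int_\O \tilde f(S_\ve(0))\d x=\int_0^T\!\!\int_\O \tilde f''(S_\ve)\,\nabla S_\ve\cdot(\rho_\ve v_\ve+\mu_\ve w_\ve)\d x\d t.
\end{equation}

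Three uniform bounds, all from Jensen's inequality applied to the convolution, drive the limit $\ve\to 0$. Joint convexity of $(s,e)\mapsto |e|^2/s$ gives $|\rho_\ve v_\ve|^2/\rho_\ve\le \eta_\ve\ast(\rho|v|^2)$ and $|\nabla S_\ve|^2/S_\ve\le \eta_\ve\ast(|\nabla S|^2/S)$, so that $\rho|v|^2,\mu|w|^2\in L^1$ and $\sqrt S\in L^2_tH^1_x$ (Lemma \ref{lem:sqrt_S_L2H1}) transfer to uniform bounds on $\sqrt{\rho_\ve}v_\ve,\sqrt{\mu_\ve}w_\ve$ in $L^2$ and on $\sqrt{S_\ve}$ in $L^2_tH^1_x$. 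Convexity of $x\mapsto x\log x$ further yields $\int S_\ve\log S_\ve\le\int S\log S$ uniformly in $\ve$, and the growth condition $0\le\tilde f''(s)\le C/s$ forces $\tilde f(s)\lesssim 1+s\log(1+s)$, so the family $\{\tilde f(S_\ve(t))\}_\ve$ is uniformly integrable at $t=0,T$.

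Passing to the limit in \eqref{smoothch-plan} then proceeds as follows. The left-hand side converges by Vitali, using $S_\ve\to S$ in $L^1$ and the uniform integrability above. For the right-hand side, write the integrand as
$$
2\,\tilde f''(S_\ve)\sqrt{S_\ve\rho_\ve}\,\bigl(\sqrt{\rho_\ve}v_\ve\bigr)\cdot\nabla\sqrt{S_\ve}+2\,\tilde f''(S_\ve)\sqrt{S_\ve\mu_\ve}\,\bigl(\sqrt{\mu_\ve}w_\ve\bigr)\cdot\nabla\sqrt{S_\ve}.
$$
The prefactor $\tilde f''(S_\ve)\sqrt{S_\ve\rho_\ve}$ is dominated by $\sup_s s\tilde f''(s)\le C$ and converges a.e. to $\tilde f''(S)\sqrt{S\rho}$; by the lemma preceding the proposition, $\sqrt{\rho_\ve}v_\ve\to\sqrt\rho v$ strongly in $L^2$ (analogously for $\mu$); and, up to a subsequence, $\nabla\sqrt{S_\ve}\deb\nabla\sqrt S$ weakly in $L^2$ (the uniform $L^2_tH^1_x$ bound plus $\sqrt{S_\ve}\to\sqrt S$ in $L^2$ identifies the weak limit). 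Dominated convergence makes the bounded a.e.-convergent prefactor times the strongly $L^2$ convergent factor converge strongly in $L^2$, and a weak--strong pairing with $\nabla\sqrt{S_\ve}$ yields $\int_0^T\!\!\int_\O\tilde f''(S)\nabla S\cdot(\rho v+\mu w)\d x\d t$, which is the required target since $g_a=g_b=\tilde f'(S)$.

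The decisive point, and the main obstacle, is that $S$ itself is not known to lie in $H^1$; only $\sqrt S$ is. The rewriting $\tilde f''(S_\ve)\nabla S_\ve=2\tilde f''(S_\ve)\sqrt{S_\ve}\,\nabla\sqrt{S_\ve}$ is what lets us work with the weaker regularity $\sqrt S\in L^2_tH^1_x$, and the assumption $s\tilde f''(s)\le C$ is exactly what absorbs the extra $\sqrt{S_\ve}$ factor into a uniformly bounded coefficient, making the weak--strong pairing feasible.
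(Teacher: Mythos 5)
Your proposal is correct and follows essentially the same route as the paper: mollify in space, use the smooth chain-rule identity, rewrite the space-time integrand with the bounded prefactor $\tilde f''(S_\ve)\sqrt{S_\ve\rho_\ve}$ (absorbing the growth via $s\tilde f''(s)\leq C$), and pair the strong $L^2$ convergence of $\sqrt{\rho_\ve}v_\ve$ with the convergence of $\nabla\sqrt{S_\ve}$ — the paper even remarks that weak convergence of $\nabla\sqrt{S_\ve}$, which is what you use, suffices. The only cosmetic difference is at the endpoint terms, where you invoke Vitali with the entropy bound (implicitly assuming $\int S\log S<\infty$ at $t=0,T$) while the paper combines Fatou with Jensen's inequality for convex functionals under convolution.
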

	\begin{proof}
		We first regularize by convolutions the densities $\rho$ and $\mu$ into $\rho_\ve$ and $\mu_\ve$. We define
		\begin{align*}
			S_\ve:=\rho_\ve+\mu_\ve=S*\eta_\ve, 
		\end{align*} where $S=\rho+\mu$. Note that $\sqrt{S}\in L^2_tH^1_x$ implies  $\nabla(\sqrt{S_\ve})\to\nabla(\sqrt{S})$ in $L^2$ in space-time. Indeed, a simple convexity argument proves  that $||\nabla(\sqrt{S_\ve})||_{L^2}\leq ||\nabla(\sqrt{S})||_{L^2}$. This proves that $\nabla(\sqrt{S_\ve})$ is bounded in $L^2$ and has, hence, a weak limit. This limit can only be $\nabla(\sqrt{S})$ since $\sqrt{S_\ve}\to\sqrt{S}$ pointwise. Yet, this also means that the limit of the $L^2$ norm will be smaller than the norm of the limit, which implies strong convergence (even if, actually, in this context weak convergence would have been enough). 
		
		Since, for the regularized densities, formula \eqref{goaleq} is true, we just have to pass to the limit each term as $\ve\to 0$. The convergence 
		$$\int_\O g(\rho_\ve,\mu_\ve)\to \int_\O g(\rho,\mu)$$
		is true for any convex function $g$ which is bounded from below by combining a Fatou's lemma giving the lower bound on the liminf and a Jensen inequality giving an upper bound (indeed, we use the fact that every convex functional invariant by translations decreases by convolution). 
		
		We then just need to pass to the limit the integral in space-time. The part concerning $\rho$ can be written as
		$$\int_0^T\int_\Omega \rho_\ve \tilde f''(S_\ve)\nabla S_\ve\cdot v_\ve=\int_0^T\int_\Omega \sqrt{\rho_\ve S_\ve} \tilde f''(S_\ve)\frac{\nabla S_\ve}{\sqrt{S_\ve}}\cdot (\sqrt{\rho_\ve}v_\ve).$$
		In the last expression, we notice that $\sqrt{\rho_\ve S_\ve} \tilde f''(S_\ve)$ is bounded (since $\sqrt{\rho_\ve S_\ve}\leq S_\ve$ and $\tilde f''(s)\leq C/s$) and pointwisely converges to $\sqrt{\rho S} \tilde f''(S)$. This implies that, if we multiply by $\frac{\nabla S_\ve}{\sqrt{S_\ve}}=2\nabla(\sqrt{S_\ve})$, we still have a strong $L^2$ convergence to $\sqrt{\rho S} \tilde f''(S)\frac{\nabla S}{\sqrt{S}}$. Together with the strong $L^2$ convergence $\sqrt{\rho_\ve}v_\ve\to \sqrt{\rho}v$, the result is proven.
	\end{proof}
	
	The next step requires to discuss the following property.
	
	\begin{definition}
		We say that a convolution kernel $\eta$ satisfies Property {\bf H1 conv} if the following holds: There exists a constant $C$ such that, for every function $u\in L^1$ with $u_+\in H_1\cap L^\infty$ and every positive constant $c>0$ we have
		$$||\nabla((u*\eta_\ve-c)_+)||_{L^2}\leq C\frac{||u_+||_{L^\infty}}{c}||\nabla(u_+)||_{L^2}.$$
	\end{definition} 
	\begin{prop} \label{prop goaleq_in_B}
		Suppose that there exists a convolution kernel $\eta$ satisfying Property {\bf H1 conv}. Then, Equality \eqref{goaleq} holds when $g\in C^2$ is compactly supported in $B\subset\R_+^2$.
	\end{prop}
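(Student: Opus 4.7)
My plan is to regularize spatially, apply the formula \eqref{goaleq} in its smooth version to the regularized quantities, and pass to the limit $\ve\to 0$. Setting $\rho_\ve:=\eta_\ve*\rho$, $\mu_\ve:=\eta_\ve*\mu$, $\rho_\ve v_\ve:=\eta_\ve*(\rho v)$ and $\mu_\ve w_\ve:=\eta_\ve*(\mu w)$, both continuity equations are preserved (working on the torus kills any boundary contribution), and $\rho_\ve,\mu_\ve$ are smooth in $(t,x)$. Since $g\in C^2$ is compactly supported in $B$, the identity
\begin{equation*}
\int_\Omega g(\rho_{\ve,T},\mu_{\ve,T})-\int_\Omega g(\rho_{\ve,0},\mu_{\ve,0})=\int_0^T\int_\Omega \rho_\ve\nabla g_a(\rho_\ve,\mu_\ve)\cdot v_\ve+\mu_\ve\nabla g_b(\rho_\ve,\mu_\ve)\cdot w_\ve
\end{equation*}
follows from differentiation under the integral and integration by parts. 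The LHS passes to $\int g(\rho_T,\mu_T)-\int g(\rho_0,\mu_0)$ by dominated convergence, since $g$ is bounded and $\rho_{\ve,t}\to\rho_t$, $\mu_{\ve,t}\to\mu_t$ a.e.\ on $\Omega$ at $t=0,T$.

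For the RHS I rewrite the first term as $\int_0^T\int_\Omega (\sqrt{\rho_\ve}v_\ve)\cdot(\sqrt{\rho_\ve}\nabla g_a(\rho_\ve,\mu_\ve))$ and similarly for the second. The lemma just proved gives $\sqrt{\rho_\ve}v_\ve\to\sqrt\rho v$ strongly in $L^2([0,T]\times\Omega)$, so it suffices to prove $\sqrt{\rho_\ve}\nabla g_a(\rho_\ve,\mu_\ve)\deb\sqrt\rho\nabla g_a(\rho,\mu)$ weakly in $L^2$, where the right-hand side is interpreted through the chain rule \eqref{chain_rule} available on $\mathcal H$. Because $g_a$ is $C^1$ with compact support in $B$, $\sqrt{\rho_\ve}$ is uniformly bounded on $\{(\rho_\ve,\mu_\ve)\in\supp g_a\}$ and converges pointwise to $\sqrt\rho$ there; hence the required weak convergence reduces to (a) a uniform $L^2([0,T]\times\Omega)$ bound on $\nabla g_a(\rho_\ve,\mu_\ve)=g_{aa}(\rho_\ve,\mu_\ve)\nabla\rho_\ve+g_{ab}(\rho_\ve,\mu_\ve)\nabla\mu_\ve$ and (b) identification of the weak limit, which follows from (a), the strong $L^2$ convergence $g_a(\rho_\ve,\mu_\ve)\to g_a(\rho,\mu)$ (by dominated convergence) and uniqueness of weak limits.

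The uniform bound (a) is the crux of the argument and the place where Property \textbf{H1 conv} intervenes. Set $K:=\supp g$, a compact subset of $B$; exactly as in Section \ref{sec l.s.c.}, cover $K$ by finitely many supports of triangles $T_i:=T_{\alpha_i,\beta_i,c_i}\Subset B$, and for each $i$ pick an enlarged triangle $\tilde T_i:=T_{\alpha_i,\beta_i,\tilde c_i}$ with $\tilde c_i>c_i$, $\supp\tilde T_i\Subset B$ and $\supp T_i\subset\{\tilde T_i>\delta_i\}$ for some $\delta_i>0$. Applying Property \textbf{H1 conv} pointwise in $t$ to $u_i:=\tilde c_i-\alpha_i\rho_t-\beta_i\mu_t$ with constant $\delta_i$ (noting $(u_i)_+=\tilde T_i(\rho_t,\mu_t)\in H^1\cap L^\infty$ thanks to $(\rho,\mu)\in L^2\mathcal H$) yields
\begin{equation*}
\|\nabla((u_i*\eta_\ve-\delta_i)_+)\|_{L^2_x}\leq C\,\frac{\|\tilde T_i(\rho_t,\mu_t)\|_{L^\infty_x}}{\delta_i}\,\|\nabla\tilde T_i(\rho_t,\mu_t)\|_{L^2_x},
\end{equation*}
whose square integrates in $t$ to an $\ve$-independent constant. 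On the set $\{\tilde T_i(\rho_\ve,\mu_\ve)>\delta_i\}$, which contains $\{(\rho_\ve,\mu_\ve)\in\supp T_i\}$, this gradient equals $-\alpha_i\nabla\rho_\ve-\beta_i\nabla\mu_\ve$; combining two such estimates for each piece of the cover with linearly independent $(\alpha_i,\beta_i)$ delivers uniform $L^2_{t,x}$ bounds on $\ind_{\{(\rho_\ve,\mu_\ve)\in K\}}\nabla\rho_\ve$ and $\ind_{\{(\rho_\ve,\mu_\ve)\in K\}}\nabla\mu_\ve$, hence on $\nabla g_a(\rho_\ve,\mu_\ve)$ and $\nabla g_b(\rho_\ve,\mu_\ve)$.

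The main obstacle is precisely this uniform $H^1$ control. Even though the functions $g_a(\rho,\mu)$, $g_b(\rho,\mu)$ lie in $H^1$ by definition of $\mathcal H$, convolving $\rho$ and $\mu$ separately may push $(\rho_\ve,\mu_\ve)$ outside of $B$ near the boundary $\partial B$, so that no naive inequality transfers the $H^1$ regularity of $g_a(\rho,\mu)$ to $g_a(\rho_\ve,\mu_\ve)$; the triangle-enlargement combined with Property \textbf{H1 conv} is the device that fixes this, and it is the only place in the proof where the special shape of $\eta$ enters.
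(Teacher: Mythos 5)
Your proposal is correct and follows essentially the same route as the paper: regularize by convolution with the kernel satisfying Property \textbf{H1 conv}, use the smooth identity plus dominated convergence for the boundary-in-time terms, exploit the strong $L^2$ convergence of $\sqrt{\rho_\ve}v_\ve$, and obtain the needed control on $\nabla\rho_\ve,\nabla\mu_\ve$ on $\{(\rho_\ve,\mu_\ve)\in\supp g\}$ through enlarged ``triangle'' functions $T_{\alpha,\beta,c}$ and Property \textbf{H1 conv}, which is exactly the paper's mechanism. The only (harmless) difference is bookkeeping: you first extract a uniform $L^2_{t,x}$ bound and identify the weak limit via the distributional gradient of $g_a(\rho_\ve,\mu_\ve)$ together with the chain rule \eqref{chain_rule}, whereas the paper identifies the limit directly from the weak convergence of the truncations and a partition-of-unity/triangle-variation argument.
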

	
	\begin{proof}
		We will proceed as above by convolution, but we choose a convolution kernel $\eta$ satisfying Property {\bf H1 conv}.
		
		The proof of this proposition will recall a lot the different steps in that of Proposition \ref{prop cv B}. Let us take a ``triangle'' function $T_{\alpha,\beta,c}$, as defined in \eqref{T}, whose support $\{(a,b)\in\R_+^2\,:\, \alpha a + \beta b \leq c\}$ is contained in the open set $B$. We can assume that there exists another such triangle contained in $B$ with coefficients $\alpha,\beta,c'$ with $c'>c$.
		
		We set $u=c'-(\alpha\rho+\beta\mu)$ and we observe that $u_+\in L^2_t H^1_x$ because $(\rho,\mu)\in L^2\mathcal H$. We also have $u\leq c'$ which shows that $u_+$ also belongs to $L^\infty$. We then deduce that the functions $(c-\alpha \rho_\ve-\beta\mu_\ve)_+=(u*\eta_\ve-(c'-c))_+$ are bounded in $L^2_t H^1_x$ because of Property {\bf H1 conv}. 
		
		We deduce that 
		$$\nabla((c-\alpha \rho_\ve-\beta\mu_\ve)_+)=-\ind_{\{\alpha\rho_\ve+\beta\mu_\ve<c\}}(\alpha\nabla\rho_\ve+\beta\nabla\mu_\ve)
		$$ weakly converges to 
		$$\nabla((c-\alpha \rho-\beta\mu)_+)=-\ind_{\{\alpha\rho+\beta\mu<c\}}(\alpha\nabla\rho+\beta\nabla\mu).
		$$ If we fix a continuous function $\chi$, supported in the support of $T_{\alpha,\beta,c}$, we have the  pointwise convergence $$
		\chi(\rho_\ve,\mu_\ve)\to\chi(\rho,\mu),
		$$ and this implies the weak $L^2$ convergence
		$$
		\chi(\rho_\ve,\mu_\ve)(\alpha\nabla\rho_\ve+\beta\nabla\mu_\ve)\deb\chi(\rho,\mu)(\alpha\nabla\rho+\beta\nabla\mu).
		$$
		This is true for any continuous function $\chi$ supported in the triangle which is the support of $T_{\alpha,\beta,c}$. Yet, $\chi$ is also supported in different triangles, that we can obtain as supports of other functions of the form $T_{\alpha,\beta,c}$, by slightly changing the values of $\alpha,\beta,c$. Therefore, we deduce that, for each such continuous function $\chi$, we have
		$$\chi(\rho_\ve,\mu_\ve)\nabla\rho_\ve\deb\chi(\rho,\mu)\nabla\rho\quad\mbox{and}\quad \chi(\rho_\ve,\mu_\ve)\nabla\mu_\ve\deb\chi(\rho,\mu)\nabla\mu.$$
		Following the argument of the proof of Lemma \ref{cv grad weak}, i.e., summing up many such functions $\chi$ and using partition of the unity, we can deduce that the same result finally holds for any continuous function $\chi$ supported in $B$ (again, since the domain $B$ is a union of such triangles).
		
		We now proceed in the usual way with the approximation by convolution, since we just need to observe that we have
		$$\int_0^T\int_\O \rho_\ve\nabla g_a(\rho_\ve,\mu_\ve)\cdot v_\ve =\int_0^T\int_\O \sqrt{\rho_\ve}(g_{aa}(\rho_\ve,\mu_\ve)\nabla\rho_\ve+g_{ab}(\rho_\ve,\mu_\ve)\nabla\mu_\ve)\cdot \sqrt{\rho_\ve}v_\ve,$$
		and then use the strong convergence of $\sqrt{\rho_\ve}v_\ve$ and the weak convergence of the other term, since $g_{aa}$ and $g_{ab}$ are continuous and compactly supported in $B$.
		
		We also need to pass the terms $\int g(\rho_\ve(0),\mu_\e(0))$ and $\int g(\rho_\ve(T),\mu_\e(T))$ to the limit, but this can be easily done by dominated convergence since $g$ is bounded.
	\end{proof}
	
	The next results extend the above proposition to the case which is of interest for us. For $\delta>0$, we set 
	$$
	B_\delta =\{(a,b)\in B\,:\, (a+\delta,b+\delta)\in B\}.
	$$
	\begin{prop}\label{gC11}
		Suppose that there exists a convolution kernel $\eta$ satisfying Property {\bf H1 conv}. Then, Equality \eqref{goaleq} holds when $g\in C^{1,1}$ is compactly supported in the open set $B\subset\R_+^2$, with $g\in C^2(B_\delta)$ and $g=0$ on $B\setminus B_\delta$.
	\end{prop}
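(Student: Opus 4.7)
The plan is to approximate $g$ by smooth functions and pass to the limit in the identity given by Proposition~\ref{prop goaleq_in_B}. Pick a standard mollifier $\zeta_n \in C_c^\infty(\R^2)$ with $\zeta_n \geq 0$, $\int \zeta_n = 1$, supported in the ball of radius $1/n$ around the origin, and set $g_n := g * \zeta_n$. Then $g_n \in C^\infty(\R^2)$, and since $g$ is compactly supported in the open set $B$, for $n$ large enough $g_n$ is also compactly supported in $B$ (in a fixed compact set $K\subset B$, say $K=\overline{\supp(g)+B(0,\varepsilon)}$ for some small $\varepsilon$). Hence Proposition~\ref{prop goaleq_in_B} applies to each $g_n$ and yields \eqref{goaleq} with $g$ replaced by $g_n$.

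The boundary terms $\int_\O g_n(\rho_0,\mu_0)$ and $\int_\O g_n(\rho_T,\mu_T)$ converge to their analogues for $g$ by the uniform convergence $g_n \to g$. For the space-time integral, the smoothness of $g_n$ gives the chain rule
$$\nabla (g_n)_a(\rho,\mu) = (g_n)_{aa}(\rho,\mu) \nabla \rho + (g_n)_{ab}(\rho,\mu) \nabla \mu,$$
where the gradients of $\rho, \mu$ are those constructed in Section~\ref{sec l.s.c.} (well-defined on the support of $g_n \subset B$). Since $g \in C^{1,1}$, $(g_n)_{aa}$ and $(g_n)_{ab}$ are uniformly bounded by $\|g\|_{C^{1,1}}$, and their supports sit in the common compact set $K \subset B$ for $n$ large. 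Combined with the $L^2\mc H$ property, which yields finiteness of $\int_0^T \int_{\{(\rho,\mu)\in K\}} (|\nabla \rho|^2 + |\nabla \mu|^2) \d x \d t$ (by applying Property~\ref{L^2H: prop3} to two linearly independent triangular cutoffs $T_{\alpha,\beta,c}$ covering $K$ and inverting the linear system to separate $\nabla \rho$ from $\nabla \mu$, as in the proof of Lemma~\ref{cv grad weak}), we obtain a uniform bound $\|(g_n)_a(\rho,\mu)\|_{L^2_t H^1_x} \leq C$ independent of $n$, and likewise for $(g_n)_b(\rho,\mu)$.

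Since $g_a$ is Lipschitz, $(g_n)_a = g_a * \zeta_n \to g_a$ uniformly on $\R^2$, so $(g_n)_a(\rho,\mu) \to g_a(\rho,\mu)$ uniformly in space-time. Combined with the uniform $L^2_t H^1_x$ bound, this produces weak $L^2_t H^1_x$ convergence (uniqueness of the weak limit identifies it as $g_a(\rho,\mu)$), and hence $\nabla (g_n)_a(\rho,\mu) \deb \nabla (g_a(\rho,\mu))$ weakly in $L^2_{t,x}$. To pass to the limit in the integral, observe that $\rho v \in L^2_{t,x}$ on the support of $g$: indeed $\sqrt{\rho}\, v \in L^2_{t,x}$ by the EDI, and $\rho$ is bounded on the (compact in $B$) support of $g$. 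The standard weak-strong pairing gives
$$\int_0^T \int_\O \nabla (g_n)_a(\rho,\mu) \cdot (\rho v) \,\d x \,\d t \longrightarrow \int_0^T \int_\O \nabla g_a(\rho,\mu) \cdot (\rho v) \,\d x\,\d t,$$
and the analogous argument for the $\mu w$ term closes the passage to the limit.

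The main technical obstacle is the uniform $L^2_t H^1_x$ bound on $(g_n)_a(\rho,\mu)$ together with the identification of its weak limit. The bound relies on transferring the $L^2\mc H$ regularity through the chain rule uniformly in $n$, which works precisely because all $g_n$ have Hessians bounded by $\|g\|_{C^{1,1}}$ and a common compact support in $B$; the identification of the weak limit rests on the uniform convergence of $(g_n)_a(\rho,\mu)$, itself a consequence of $g_a$ being Lipschitz (so that mollification converges uniformly, not merely in some weak sense).
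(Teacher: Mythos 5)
Your argument is correct, but it follows a genuinely different route from the paper. The paper does not mollify $g$: it multiplies $g$ by a $C^2$ cut-off $\chi_r$ vanishing at distance $\le r$ from $B\setminus B_\delta$, applies Proposition \ref{prop goaleq_in_B} to the exactly-$C^2$ function $g\chi_r$, and passes to the limit $r\to 0$ by dominated convergence, the key point being that the hypotheses $g\in C^2(B_\delta)$, $g=0$ on $B\setminus B_\delta$ and the $C^{1,1}$ bound give $|\nabla g|\lesssim r$ and $|g|\lesssim r^2$ on $\{\nabla\chi_r\neq 0\}$, so that $D^2(g\chi_r)$ stays uniformly bounded. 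You instead mollify $g$ itself, get uniform Hessian bounds from $C^{1,1}$ alone, and replace dominated convergence by a weak-compactness argument: a uniform $L^2_tH^1_x$ bound on $(g_n)_a(\rho,\mu)$ (via the chain rule \eqref{chain_rule} and the triangle trick of Lemma \ref{cv grad weak}), identification of the weak limit through the uniform convergence $(g_n)_a\to g_a$, and weak--strong pairing with $\rho v\in L^2$ on $\{(\rho,\mu)\in K\}$. This is a nice way to sidestep the real difficulty, namely that $(g_n)_{aa}(\rho,\mu)$ need not converge pointwise a.e.\ to $g_{aa}(\rho,\mu)$ when $g$ is only $C^{1,1}$ (pullbacks of Lebesgue-null sets under $(\rho,\mu)$ can be fat); note, though, that your limit identity then carries $\nabla(g_a(\rho,\mu))$, the Sobolev gradient of the composition, and one still needs the chain rule \eqref{chain_rule} (this is where the extra hypotheses $g\in C^2(B_\delta)$, $g=0$ on $B\setminus B_\delta$ make the pointwise form unambiguous, since $\nabla(g_a(\rho,\mu))$ vanishes a.e.\ on $\{(\rho,\mu)\in\partial B_\delta\}$) to recover the explicit expression used later in \eqref{bargoaleq}; the paper's cut-off argument produces that explicit form directly. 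Two small points: a compact $K\subset B$ is in general not contained in the support of a single triangle (nor covered by two), so the separation of $\nabla\rho$ and $\nabla\mu$ requires a finite family of triangles plus the $\e$-perturbation as in Lemma \ref{cv grad weak} — which is what you cite, so this is only a slip of phrasing; and the finiteness of $\int_0^T\!\int_\O\rho|v|^2$ should be stated as an assumption on the velocity fields (as in the first lemma of Section \ref{sec:differentiation_properties}), which is harmless since it holds for the EDI solution you have in mind.
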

	\begin{proof}
		The proof is obtained by approximation using a cut-off function $\chi_r$ with the following properties: 
		\begin{itemize}
			\item $0\leq \chi_r\leq 1$, 
			\item $\chi_r(a,b)=1$ if the distance between $(a,b)$ and $B\setminus B_\delta$ is larger than $2r$ and $\chi_r(a,b)=0$ if this same distance is smaller than $r$,
			\item  $\chi_r\in C^2$, with $|\nabla \chi_r|\leq Cr^{-1}$ and $|D^2 \chi_r|\leq Cr^{-2}$.
		\end{itemize}
		We then apply Proposition \ref{prop goaleq_in_B} to $g\chi$, which is $C^2$, and pass to the limit. The convergence of the boundary terms (in time) is obtained by pointwise convergence and dominated convergence. For the convergence of the integral terms in space-time we need to dominate the second derivatives 
		$$
		(g\chi_r)_{ij}=g_{ij}\chi_r+g_i(\chi_r)_j+g_j(\chi_r)_i+g(\chi_r)_{ij}.
		$$ In the above sum, the first term is bounded. The second and the third terms are also bounded since $|\nabla\chi_r | \leq Cr^{-1}$ but $|\nabla g|\leq r$ on $\{\nabla \chi_r\neq 0\}$, which is a consequence of the $C^{1,1}$ behaviour of $g$. Similarly, for the last term we use $|D^2\chi_r | \leq Cr^{-2}$ with $|g|\leq r^2$ on $\{D^2 \chi_r\neq 0\}$.
	\end{proof}
	
	In order to conclude, we now take the function $f$ and write it as $f(a,b)=\bar g(a,b)+\tilde f(a+b)$, with $\bar g\in C^{1,1}$ and $\bar g=0$ on $A$. It is not yet possible to apply Proposition \ref{gC11} to $g=\bar g$ since $\bar g$ is supported on $\overline B=B\cup\partial A$ and not on $B$, but we will obtain this by approximation.
	
	\begin{prop}
		Suppose that there exists a convolution kernel $\eta$ satisfying Property {\bf H1 conv}. Then, Equality \eqref{goaleq} holds when $g=\bar g$, and hence for $g=f$, provided that the curve $(\rho,\mu)$ is such that $\int_0^T \Slo(\rho,\mu) \d t$ is finite.
	\end{prop}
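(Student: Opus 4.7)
The plan is to establish \eqref{goaleq} for $g = \bar g$ by an approximation argument; the case $g=f$ then follows immediately by linearity from the decomposition $f = \bar g + \tilde f(a+b)$ together with the proposition already proved for $\tilde f(a+b)$. I will fix a smooth cutoff $\psi_n : \R_+^2 \to [0,1]$ that equals $1$ on $\{d_A \geq 2/n\}$, vanishes on $\{d_A \leq 1/n\}$, and satisfies $|\nabla \psi_n|\leq Cn$ and $|D^2 \psi_n|\leq Cn^2$ (where $d_A$ denotes the distance to $A$), and set $\bar g_n := \bar g\,\psi_n$. Then $\bar g_n \in C^{1,1}(\R_+^2)$, is $C^\infty$ on its support, and vanishes outside $B_{1/n}$, so Proposition \ref{gC11} applies and yields \eqref{goaleq} for each $\bar g_n$. (If $\bar g$ is not compactly supported in $\R_+^2$ one first truncates at large values of $a+b$; this is harmless since $\bar g$ decays to $0$ at infinity on $\overline B$, as one checks from $\pi(s)\to 0$ at $s\to +\infty$.)

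To pass to the limit, the boundary terms are handled by dominated convergence: $\bar g_n(\rho_t,\mu_t) \to \bar g(\rho_t,\mu_t)$ pointwise with $|\bar g_n|\leq|\bar g|$, and $\bar g(\rho_t,\mu_t)\in L^1(\O)$ follows from $F(\rho_t,\mu_t)\leq F(\rho_0,\mu_0) < +\infty$. For the space-time integrals I need pointwise convergence of the integrand and a uniform dominating function. Expanding
$(\bar g_n)_{\alpha\beta} = \bar g_{\alpha\beta}\psi_n + \bar g_\alpha(\psi_n)_\beta + \bar g_\beta(\psi_n)_\alpha + \bar g(\psi_n)_{\alpha\beta}$ for $\alpha,\beta\in\{a,b\}$, I exploit that $\bar g \in C^{1,1}$ vanishes identically on the open set $A$, so $\nabla \bar g$ also vanishes on $\partial A$; a Taylor expansion then gives $|\bar g|\leq C\,d_A^2$ and $|\nabla \bar g|\leq C\,d_A$ near $\partial A$. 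These vanishing rates compensate precisely the growth of $\nabla \psi_n$ and $D^2\psi_n$, so all correction terms remain uniformly bounded, are supported on the thin annulus $\{1/n \leq d_A \leq 2/n\}$, and tend to $0$ in measure. Consequently $(\bar g_n)_{\alpha\beta}(\rho,\mu)\to \bar g_{\alpha\beta}(\rho,\mu)$ pointwise almost everywhere. On the support of $\bar g$, the product $\rho\mu$ is bounded away from $1$ because $\partial A$ lies strictly below $\{ab=1\}$ by Lemma~\ref{lem:Properties_pi}; inverting the relations $\nabla\rho = (\rho X - \rho\mu Y)/(1-\rho\mu)$, $\nabla\mu = (\mu Y - \rho\mu X)/(1-\rho\mu)$ with $X := \nabla\rho/\rho + \nabla\mu$, $Y := \nabla\mu/\mu + \nabla\rho$, one obtains a uniform pointwise bound
$$\rho\,|\nabla(\bar g_n)_a(\rho,\mu)|^2 + \mu\,|\nabla(\bar g_n)_b(\rho,\mu)|^2 \leq C\,(\rho X^2 + \mu Y^2)\,\ind_{\{(\rho,\mu)\in B\}},$$
whose space-time integral is controlled by $\int_0^T \Slo(\rho_t,\mu_t)\,\d t < +\infty$. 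Combined with $\int_0^T\int_\O (\rho|v|^2 + \mu|w|^2) < +\infty$ and Cauchy--Schwarz, this provides the uniform integrability needed to apply dominated convergence to the integral term as well, completing the proof of \eqref{goaleq} for $\bar g$.

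The main obstacle is precisely the uniform control of the correction terms $\bar g_\alpha(\psi_n)_\beta$ and $\bar g(\psi_n)_{\alpha\beta}$: they carry factors as large as $n$ and $n^2$, and only the combination of the $C^{1,1}$ regularity of $\bar g$ with the fact that $\bar g \equiv 0$ on the open set $A$ (which forces $\nabla \bar g$ to vanish on $\partial A$) keeps them bounded uniformly in $n$. Without this double vanishing the cutoff would blow up and the approximation argument would collapse; once it is in place, the slope bound $\int_0^T \Slo(\rho_t,\mu_t)\,\d t < +\infty$ supplies the uniform integrability automatically.
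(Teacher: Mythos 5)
Your overall skeleton (approximate $\bar g$, apply Proposition \ref{gC11} to the approximations, then pass to the limit by domination using the slope bound) is the same as the paper's, but the specific approximation you chose breaks down at the one point where the whole difficulty of this proposition is concentrated. Your domination of the correction terms rests on the claim that ``on the support of $\bar g$, the product $\rho\mu$ is bounded away from $1$ because $\partial A$ lies strictly below $\{ab=1\}$ by Lemma \ref{lem:Properties_pi}''. This is false: Lemma \ref{lem:Properties_pi} gives $\pi(s)<1$ only for $s>2$, while $\pi(2)=1$, i.e.\ the boundary of $A$ touches the hyperbola $\{ab=1\}$ at the point $(1,1)$, which belongs to $\supp\bar g=\overline B$. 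Consequently $1-\rho\mu$ is not bounded below on $\supp \bar g$, and the inversion $\nabla\rho=(\rho X-\rho\mu Y)/(1-\rho\mu)$, $\nabla\mu=(\mu Y-\rho\mu X)/(1-\rho\mu)$ degenerates exactly where your cutoff annulus $\{1/n\le d_A\le 2/n\}$ accumulates. The limiting integrand is fine because for $\bar g$ itself the Hessian combines into $\nabla f_a-\tilde f''(S)\nabla S$ (controlled by the slope and by $\sqrt S\in L^2_tH^1_x$), but your correction terms $\bar g_\alpha(\psi_n)_\beta$ and $\bar g(\psi_n)_{\alpha\beta}$ are only \emph{bounded} coefficients multiplying $\nabla\rho$ and $\nabla\mu$ \emph{separately}, on a region arbitrarily close to $(1,1)$; there the slope does not control $\rho|\nabla\rho|^2$ or $\rho|\nabla\mu|^2$, so the claimed uniform bound $\rho|\nabla(\bar g_n)_a|^2+\mu|\nabla(\bar g_n)_b|^2\le C(\rho X^2+\mu Y^2)\ind_B$ does not hold and dominated convergence is not justified.

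The paper circumvents precisely this issue by approximating with the shifted function $\bar g(a+\delta,b+\delta)$ rather than with a multiplicative cutoff: the relevant region becomes $B_\delta$, where near $(1,1)$ one can exploit $(\rho+\delta)(\mu+\delta)<1$ to get $1-P\ge\delta S\ge\delta$, hence the key uniform bound $\frac{\delta}{(\rho+\delta)(1-P)}\ind_{B_\delta}\le C$; moreover the difference of Hessians splits into the three terms $\nabla\rho\bigl(\frac{1}{\rho+\delta}-\frac1\rho\bigr)\ind_{B_\delta}$, $\nabla S\bigl(\tilde f''(S+2\delta)-\tilde f''(S)\bigr)\ind_{B_\delta}$ and $\nabla f_a\,\ind_{B\setminus B_\delta}$, each of which is dominated in $L^2(\rho)$ (respectively $L^2(\mu)$) by quantities the slope and $\sqrt S\in L^2_tH^1_x$ actually control. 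If you want to salvage a cutoff-based argument you would need to build the cutoff so that the correction terms only ever multiply combinations like $X$, $Y$ or $\nabla S$, or gain a factor comparable to $1-\rho\mu$ near $(1,1)$; as written, the step is a genuine gap. (Minor additional points: $\bar g_n$ is not ``$C^\infty$ on its support'', only as regular as $\bar g$, which suffices for Proposition \ref{gC11}; and the parenthetical truncation at infinity via ``$\bar g\to0$ on $\overline B$'' would also need justification.)
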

	\begin{proof}
		We first apply  Proposition \ref{gC11} to $g(a,b):=\bar g(a+\delta,b+\delta)$ and obtain
		\begin{multline}\label{bargoaleq}
			\int_\Omega \bar g(\rho_T+\delta,\mu_T+\delta)-\int_\omega \bar g(\rho_0+\delta,\mu_0+\delta)\\
			=\int_0^T\int_\O \rho v\cdot\left( \bar g_{aa}(\rho+\delta,\mu+\delta)\nabla\rho+  \bar g_{ab}(\rho+\delta,\mu+\delta)\nabla\mu\right)\\
			+\int_0^T\int_\O \mu w\cdot \left(\bar g_{ab} (\rho+\delta,\mu+\delta)\nabla\rho
			+  \bar g_{bb}(\rho+\delta,\mu+\delta) \nabla \mu \right).
		\end{multline}
		We have the following formulas for the second derivatives of $\bar g$:
		\begin{eqnarray*}
			\bar g_{aa}(a,b)&=&\left(\frac 1a-\tilde f''(a+b)\right)\ind_{B}(a,b),\\
			\bar g_{ab}(a,b)&=&\left(1-\tilde f''(a+b)\right)\ind_{B}(a,b),\\
			\bar g_{bb}(a,b)&=&\left(\frac 1b-\tilde f''(a+b)\right)\ind_{B}(a,b).
		\end{eqnarray*}
		We need to prove that all the terms computed in $(\rho+\delta,\mu+\delta)$ converge to the corresponding terms in $(\rho,\mu)$. To do so, we consider the difference between the terms with $\delta$ and those without $\delta$. Since $v\in L^2(\rho)$, we just need to prove that the following terms converge to $0$ in $L^2(\rho)$: 
		$$\nabla\rho\left(\frac{1}{\rho+\delta}-\frac1\rho\right)\ind_{B_\delta},\quad \nabla S\left( \tilde f''(S+2\delta)-\tilde f''(S)\right)\ind_{B_\delta},\quad  \nabla f_a(\rho,\mu)\ind_{B\setminus B_\delta},$$
		(we only consider the terms in the first integral, the second integral in \eqref{bargoaleq} is treated similarly, using $w\in L^2(\mu)$).
		The last term is easy to handle, since it converges pointwise to $0$ and it is dominated by $|\nabla f_a(\rho,\mu)|$ which is in $L^2(\rho)$ (owing to the assumption on the slope).
		
		For the term involving $\tilde f''$, we observe that $\tilde f''$ is a Lipschitz function on $[2,+\infty)$ and its extension to $[0,2]$ can be chosen to be Lipschitz as well. In particular, we can impose 
		$$
		|\tilde f '''(S)|\leq C\min\{S^{-2},1\},
		$$ so that we have 
		$$
		\tilde f''(S+2\delta)-\tilde f''(S)\leq C\delta \min\{S^{-2},1\}.
		$$ The $L^2(\rho)$ norm of the desired term is thus smaller than $C\delta ||\min\{S^{-2},1\}\nabla S||_{L^2(S)}$, and the condition $\sqrt{S}\in L^2_tH^1_x$ is more than enough to guarantee the finiteness of the norm which multiplies $\delta$.
		
		We are now left with the term involving $\nabla \rho$. We recall that, setting $X=\frac{\nabla\rho}{\rho}+\nabla\mu$ and 
		$Y=\frac{\nabla \mu}{\mu}+\nabla \rho$, we have $X\in L^2(\rho)$, $Y\in L^2(\mu)$ and $\nabla\rho=\frac{\rho X- PY}{1-P}$, where $P=\rho\mu<1$ on $B$. Hence, the term we are interested in can be re-written as
		$$\frac{\delta\nabla\rho}{\rho(\rho+\delta)}=\frac{\delta(\rho X-P Y)}{\rho(\rho+\delta)(1-P)}.$$
		By distinguishing the case where $(\rho,\mu)$ is close to $(1,1)$ (the only point in $\overline B$ where $P=1$) or far from it we can prove that we have 
		$$\frac{\delta}{(\rho+\delta)(1-P)}\ind_{B_\delta}\leq C.$$
		Indeed, if $(\rho,\mu)$ is far from $(1,1)$ then $1-P$ is bounded from below and we use $\rho+\delta\geq \delta$. If instead $(\rho,\mu)$ is close to $(1,1)$, then $(\rho+\delta)$ is bounded from below, and, moreover, we use $(\rho,\mu)\in B_\delta$ to deduce $(\rho+\delta)(\mu+\delta)<1$, which implies $P+\delta S<1$, hence $1-P\geq\delta S\geq \delta $ (in the last inequality we use again the fact that $(\rho,\mu)$ is close to $(1,1)$ to bound $S$ from below by $1<2$).
		So, we can bound the term we are interested in by $|X-\mu Y|\ind_{B}$. The vector field $X$ is supposed to belong to $L^2(\rho)$, while $Y\in L^2(\mu)$ implies $\mu Y\in L^2(\rho)$ since its squared norm is given by $\int \rho\mu^2|Y|^2$ and $\rho\mu\leq 1$ on $B$.
		This proves that the term involving $\nabla\rho$ is dominated by a term in $L^2(\rho)$ and its pointwise convergence to $0$ shows that it converges to $0$ in $L^2(\rho)$. 
		
		This proves that we can take the limit $\delta\to0$ in the first part of the integral. For the second part, one has to do the same estimates in $L^2(\mu)$, and the computations are the same.
	\end{proof}
	
	\section{An \texorpdfstring{$H^1$}{H^1} estimate on the positive part}
	\label{sec:H1_estimate}
	
	The goal of this section is to prove that, in dimension $d=1$, there indeed exists a convolution kernel satisfying the {\bf H1 conv} property. In order to avoid boundary issues, the result will be proven on the $1-$dimensional torus $\mathbb S^1$. As we already explained in the previous section, this implies a similar result on a segment, after one reflection.
	
	Before going to dimension one, we would like to explain why this very part of the paper unfortunately requires us to restrict ourselves to dimension one: the reason lies in the very different behavior of $H^1$ functions in terms of pointwise bounds. Indeed, in the one-dimensional case $H^1$ functions are continuous, so that the ``bad'' region where $u<0$ (i.e. the region where we only have an $L^1$ control on the function) has to be far away from regions where $u>c$; this is not the case in higher dimension. We note that it would be possible to obtain some estimates similar to the ones defining {\bf H1 conv} in any dimension if we added the assumption that $u \in L^\infty$ but it is not possible to obtain such estimates if we stick to the the $L^1$ assumption that we required which, by density, is actually equivalent to working with measures. 
	
	Unfortunately, for the sake of the applications of {\bf H1 conv} to Section 5, we cannot assume $u\in L^\infty$ as this would be equivalent to $\rho,\mu\in L^\infty$, and $L^\infty$ bounds are unknown in our cross-diffusion system.
	
	Our $1-$dimensional result will be proven by considering the kernel
	$$
	\tilde\eta(x) = \frac{2}{(m-1)(1+\vert x \vert)^m},
	$$
	for $m>2$.  We have $\tilde\eta \in L^1(\R)$, $\int \tilde\eta(x)\d x=1$, and $\tilde\eta$ has a finite first moment. We then define a kernel on the torus using
	$$\eta(x)=\sum_{k\in\mathbb Z} \tilde\eta(x-k).$$
	An easy computation shows that, for $x\neq 0$, we have
	$$
	\tilde\eta^\prime(x) = -\frac{m}{1+\vert x \vert } \tilde \eta(x) \sign(x),
	$$
	which implies in particular 
	\begin{equation}\label{eta'eta}
		\vert \tilde\eta^\prime \vert \leq m \tilde\eta.
	\end{equation}
	Moreover, we have in the distributional sense,
	$$
	\tilde\eta^{\prime\prime}= \frac{m(m+1)}{(1+\vert x \vert)^2}\tilde\eta- \frac{2m}{m-1} \delta_0,
	$$
	hence the positive part of the second derivative satisfies
	$$
	(\tilde\eta^{\prime\prime})_+= \frac{m(m+1)}{(1+\vert x \vert)^2}\tilde\eta.
	$$
	In particular, we also obtain
	$$	(\tilde\eta^{\prime\prime})_+\tilde\eta\geq (1+\frac 1m)	|\tilde\eta^{\prime}|^2.$$

	All these properties can be translated in terms of the kernel $\eta$ on $\mathbb S^1$, since we have uniform convergence of the series defining $\eta$ together with its first and second derivatives. Moreover
	\begin{eqnarray*}
		|\eta^\prime(x)|\leq\sum_k |\tilde\eta^\prime(x-k)|&\leq& \left (1+\frac 1m \right )^{-1/2} \sum_k \vert  (\tilde\eta^{\prime\prime})_+(x-k)^{1/2}\tilde\eta(x-k)^{1/2} \vert \\&\leq&  \left (1+\frac 1m \right )^{-1/2} \left(\sum_k (\tilde\eta^{\prime\prime})_+(x-k)\right)^{1/2}\left(\sum_k \tilde\eta(x-k)\right)^{1/2},   \\
		&=& \left (1+\frac 1m \right )^{-1/2}(\eta^{\prime\prime})_+(x)^{1/2}\eta(x)^{1/2}.
	\end{eqnarray*}
	which proves that also $\eta$, which is a $C^2$ function except at $0$ where $\eta''$ is a negative Dirac mass, satisfies
	$$\left (\eta^{\prime\prime} \right )_+\eta\geq \left (1+\frac 1m \right )	|\eta^{\prime}|^2.$$
	In order to approximate the identity on the torus, we define
	$$
	\tilde\eta_\e(x) = \frac{1}{\e }\tilde\eta \left (\frac{x}{\e}\right ),
	$$
	and $\eta_\ve(x):=\sum_{k\in\mathbb Z}\tilde\eta_\ve(x-k)$. Observe that $\eta_\e$ is of mass $1$ on the torus and that it satisfies 
	$$|\eta_\ve^\prime|\leq \frac \ve \eta_\ve,\quad \left (1+\frac 1m \right ) |\eta_\ve'|^2\leq (\eta_\e^{\prime\prime})_+\eta_\ve.$$

	As a consequence of the properties of $\eta_\ve$ we do have, for any $L^1$ function $w\geq 0$, denoting $w_\ve:=\eta_\ve*w$,
	\begin{equation}\label{wepweppweppp}
		\left (1+\frac1m \right )|w_\ve^\prime|^2\leq w_\ve \,(\eta_\ve^{\prime\prime})_+*w.
	\end{equation}
	This can be, indeed, obtained from the Cauchy-Schwartz inequality:
	\begin{eqnarray*}
		\vert \eta^\prime_\e* w\vert^2
		&\leq&
		\left(\int w(x-y)\vert \eta^\prime_\e(y)\vert \d y\right)^2
		\leq
		\left(\int w(x-y)\left (1+\frac 1m \right )^{-1/2}\sqrt{\eta_\ve(y)(\eta^{\prime\prime}_\e)_+(y)} \ \d y\right)^2\\
		&	\leq& \left (1+\frac 1m \right )^{-1}\int w(x-y)\eta_\ve(y)\d y\int w(x-y)(\eta^{\prime\prime}_\e)_+(y) \d y.
	\end{eqnarray*}
	
	\begin{prop} \label{prop6.1}
		There exists $K>0$ (depending only on $m$) such that, for any $u\in L^1$ whose positive part $u_+$ is in $H^1\cap L^\infty$, and for every $c>0$, we have
		$$\|((u_\e-c)_+)^\prime\|_{L^2}\leq K  \frac{\|u_+\|_{L^\infty}}{c}\|(u_+)^\prime\|_{L^2}.$$
	\end{prop}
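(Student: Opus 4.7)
The strategy is to decompose $u=u_+-u_-$ and treat the two contributions to $u_\varepsilon'$ separately. Since $u_+\in H^1$, the identity $\eta_\varepsilon'*u_+=\eta_\varepsilon*(u_+)'$ holds, so
\[u_\varepsilon'=\eta_\varepsilon*(u_+)'-\eta_\varepsilon'*u_-.\]
Young's inequality gives $\|\eta_\varepsilon*(u_+)'\|_{L^2}\le\|(u_+)'\|_{L^2}$. Writing $A:=\|u_+\|_{L^\infty}$, we may assume $A\ge c$, since otherwise $u_\varepsilon\le(u_+)_\varepsilon\le A<c$ and the set $\{u_\varepsilon>c\}$ is empty; hence this first contribution already fits in the target $\lesssim(A/c)\|(u_+)'\|_{L^2}$. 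The main task is to control
\[J:=\int\mathbf{1}_{\{u_\varepsilon>c\}}|\eta_\varepsilon'*u_-|^2\,dx.\]

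For $J$, apply the structural Cauchy--Schwarz inequality \eqref{wepweppweppp} with $w=u_-\ge 0$:
\[|\eta_\varepsilon'*u_-|^2\le\tfrac{1}{1+1/m}\,(u_-)_\varepsilon\cdot\bigl((\eta_\varepsilon'')_+*u_-\bigr).\]
The key observation is that on $\{u_\varepsilon>c\}$ the identity $u_\varepsilon=(u_+)_\varepsilon-(u_-)_\varepsilon$ combined with $u_\varepsilon>c$ yields two pointwise estimates: $(u_-)_\varepsilon<(u_+)_\varepsilon-c\le A$, and $\mathbf{1}_{\{u_\varepsilon>c\}}\le(u_+)_\varepsilon/c\le A/c$. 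These let one replace the potentially singular factor $(u_-)_\varepsilon$ (which may concentrate like a Dirac) by the bounded $(u_+)_\varepsilon$, producing the expected factor $A/c$.

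Substituting and swapping the convolution via the evenness of $\eta$ reduces the bound on $J$ to an integral of the form $\int u_-(y)\,\Psi_\varepsilon(y)\,dy$, where $\Psi_\varepsilon$ is built from convolutions of $(u_+)_\varepsilon$-type quantities with $(\eta_\varepsilon'')_+$. \textbf{The main obstacle}, which forces the restriction to dimension one, is that $u_-$ is only an $L^1$ measure while $\|(\eta_\varepsilon'')_+\|_{L^1}$ is of order $1/\varepsilon^2$, so a naive Young estimate on $\Psi_\varepsilon$ blows up as $\varepsilon\to 0$. The rescue uses that $u_+u_-\equiv 0$, so $u_-$ is supported in $\{u_+=0\}$; in dimension one the embedding $H^1\hookrightarrow C^{0,1/2}$ gives $|u_+(y+h)|\le\sqrt{|h|}\,\|(u_+)'\|_{L^2}$ whenever $u_+(y)=0$, and combined with the finite $\tfrac12$-moment of $\eta$ (guaranteed by $m>2$) this yields the pointwise bound $(u_+)_\varepsilon(y)\le C_m\sqrt{\varepsilon}\,\|(u_+)'\|_{L^2}$ on $\mathrm{supp}(u_-)$. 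Iterating this pointwise estimate together with the distributional identity $\eta_\varepsilon''*\eta_\varepsilon=\eta_\varepsilon'*\eta_\varepsilon'$ (valid modulo the Dirac at $0$ in $\eta_\varepsilon''$) to shift second derivatives onto smoothed quantities finally allows one to close the estimate $J\le K'^2(A/c)^2\|(u_+)'\|_{L^2}^2$, which combined with the $u_+$ piece gives the claim with $K$ depending only on $m$.
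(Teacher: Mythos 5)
Your reduction of the problem is fine up to the point where you apply \eqref{wepweppweppp}: splitting $u_\varepsilon'=\eta_\varepsilon*(u_+)'-\eta_\varepsilon'*u_-$, disposing of the $u_+$ piece by Young and $\|u_+\|_{L^\infty}\geq c$, and isolating $J=\int\mathbbm{1}_{\{u_\varepsilon>c\}}|\eta_\varepsilon'*u_-|^2$ is essentially how the paper starts. The gap comes right after. Once you bound $(u_-)_\varepsilon\leq\|u_+\|_{L^\infty}$ and replace $\mathbbm{1}_{\{u_\varepsilon>c\}}$ by $(u_+)_\varepsilon/c$, and then swap the convolution, you have upper-bounded $J$ by a constant times $\frac{\|u_+\|_{L^\infty}}{c}\int u_-\,\bigl((\eta_\varepsilon'')_+*(u_+)_\varepsilon\bigr)$. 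This quantity is \emph{linear} in $u_-$: since $\eta_\varepsilon>0$ and $(\eta_\varepsilon'')_+>0$ away from the origin, the factor $(\eta_\varepsilon'')_+*(u_+)_\varepsilon$ is a strictly positive continuous function (as soon as $u_+\not\equiv 0$), so the integral is at least $\delta\|u_-\|_{L^1}$ for some $\delta>0$ depending on $u_+$ and $\varepsilon$. But the statement gives no control at all on $\|u_-\|_{L^1}$ (take $u_-=M\mathbbm{1}_E$ with $E\subset\{u_+=0\}$ and $M\to\infty$: the left-hand side of the Proposition stays bounded, your majorant does not). Therefore no subsequent refinement of the bound on $\Psi_\varepsilon=(\eta_\varepsilon'')_+*(u_+)_\varepsilon$ can close the argument; in particular your (correct) pointwise estimate $(u_+)_\varepsilon\leq C_m\sqrt{\varepsilon}\,\|(u_+)'\|_{L^2}$ on $\supp u_-$ only gives $\Psi_\varepsilon\lesssim\varepsilon^{-3/2}\|(u_+)'\|_{L^2}$ there (since $\|(\eta_\varepsilon'')_+\|_{L^1}\sim\varepsilon^{-2}$), and in any case the uncontrolled factor $\|u_-\|_{L^1}$ remains. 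The sentence ``iterating this pointwise estimate \dots finally allows one to close the estimate'' is exactly the missing proof, and as set up it cannot be supplied: the information was lost when the indicator was discarded leaving a term $1$-homogeneous in $u_-$.

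What the paper does differently, and what you would need, is an estimate that is $0$-homogeneous in $u_-$. Writing $v=u_+$, $w=u_-$, the paper splits $\{v_\varepsilon>w_\varepsilon+c\}$ according to $|v_\varepsilon-v|\gtrless c/2$; the first region is handled by Chebyshev plus $\|v_\varepsilon-v\|_{L^2}\leq C\varepsilon\|v'\|_{L^2}$ and the kernel bound $|w_\varepsilon'|\leq\frac{m}{\varepsilon}w_\varepsilon\leq\frac{m}{\varepsilon}\|v\|_{L^\infty}$ on that set, while on the second region the indicator is bounded by $\frac{4v^2}{c^2}\cdot\frac{\|v\|_{L^\infty}^2}{w_\varepsilon^2}$, reducing everything to $\int v^2\,|w_\varepsilon'|^2/w_\varepsilon^2$. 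The division by $w_\varepsilon^2$ is the decisive point: this quantity is insensitive to the size of $w$, and it is bounded by $4m^2\int|v'|^2$ via an integration by parts, Young's inequality, \eqref{wepweppweppp}, and the fact that the negative part of $\eta_\varepsilon''$ is a Dirac mass at $0$ whose contribution is killed by $vw=0$. You use $u_+u_-=0$ only to locate $\supp u_-$ inside $\{u_+=0\}$ for a H\"older bound; the paper uses it to cancel the singular part of $\eta_\varepsilon''$, and keeps the ratio $|w_\varepsilon'|^2/w_\varepsilon^2$ so that the mass of $u_-$ never enters.
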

	
	\begin{proof}
		Let $u$ be as in the statement of the proposition. For notational simplicity, let us denote $v= u_+$ and $w= u_-$ the positive and negative parts of $u$ respectively. Hence
		$
		u = v - w.
		$
		We have
		\begin{align}\label{dec1}
			\begin{split}
				\int\vert( u_\e -c )_+^\prime \vert^2 &=\int  \mathbbm{1}_{v_\e - w_\e > c} \vert  (v_\e - w_\e)^\prime\vert^2  \\
				&\leq
				2\int \mathbbm{1}_{v_\e - w_\e > c}(\vert  v_\e^\prime \vert^2 + \vert w_\e^\prime\vert^2)
				\leq 2 \int \vert v^\prime  \vert^2 + 2 \int \mathbbm{1}_{v_\e> w_\e +c}\vert  w_\e^\prime\vert^2.
			\end{split}
		\end{align}
		Note that the desired inequality is straightforward if $c>\|v\|_{L^\infty}$ (since in this case $(u_\ve-c)_+=0$), so we can assume $\|v\|_{L^\infty}\geq c$ and estimate the term $\int \vert v^\prime  \vert^2$ with  $\frac{\|v\|_{L^\infty}^2}{c^2} \|v'\|_{L^2}^2$.
		
		We now  handle the last term
		\begin{equation}\label{eq last term}
			\int \mathbbm{1}_{v_\e> w_\e +c}\vert w_\e^\prime\vert^2 =  \int \mathbbm{1}_{\substack{v_\e> w_\e +c\\ \vert v_\e - v\vert > \frac{c}{2}}}\vert w_\e^\prime\vert^2 +  \int \mathbbm{1}_{\substack{v_\e> w_\e +c\\ \vert v_\e - v\vert \leq \frac{c}{2}}}\vert w_\e^\prime\vert^2.
		\end{equation}
		First, observe that, on the set where $w_\e+c <v_\e$, we have $w_\e \leq \|v\|_{L^\infty}$. Moreover, by the definition of $\eta_\e$, we have, on the same set,
		$$
		\vert w_\e^\prime \vert^2 \leq \frac{m^2}{\e^2}w_\e^2 \leq  \frac{m^2}{\e^2}\|v\|_{L^\infty}^2,
		$$
		where the first inequality is a consequence of \eqref{eta'eta}, after scaling by $\varepsilon$.
		Observe also that we have
		$$
		\left\vert \left \{ \vert v_\e -v \vert > \frac{c}{2} \right \} \right\vert \leq K(m)\frac{4}{c^2}\e^2 \| v^\prime\|_{L^2}^2.
		$$
		The last inequality is readily obtained by using $	\left\vert \left \{ \vert v_\e -v \vert > \frac{c}{2} \right \} \right\vert \leq \frac{4}{c^2}\int \vert v_\e -v\vert^2$, together with $||v_\ve-v||_{L^2}\leq C||v^\prime||_L^2$, an inequality which is a consequence of the characterization of $H^1$ functions in terms of the norms of the difference between the function and its translations, as soon as the convolution kernel has finite first moment (which leads, hence, to a constant depending on $m$ appear).
		
		Combining the last two estimates we obtain
		$$
		\int \mathbbm{1}_{\substack{v_\e> w_\e +c\\ \vert v_\e - v\vert > \frac{c}{2}}}\vert w_\e^\prime\vert^2 \leq \frac{K(m)}{c^2} \|v^\prime\|_{L^2}^2 \| v\|_{L^\infty}^2.
		$$
		Now, observe that we have
		\begin{align*}
			\int \mathbbm{1}_{\substack{v_\e> w_\e +c\\ \vert v_\e - v\vert \leq \frac{c}{2}}}\vert w_\e^\prime\vert^2 \leq  \int \mathbbm{1}_{\substack{ w_\e \leq \| v \|_{L^\infty} \\  v \geq \frac{c}{2}}}\vert w_\e^\prime\vert^2 \leq  \int \left(\frac{2 v}{c}\right)^2     \left( \frac{\|v\|_{L^\infty}}{w_\e}\right)^2  \vert w_\e^\prime\vert^2
			=   \frac{4 \|v\|_{L^\infty}^2}{c^2} \int v^2 \frac{1}{w_\e^2}  \vert w_\e^\prime\vert^2.
		\end{align*}
		Thus, \eqref{eq last term} finally re-writes as
		\begin{equation}\label{est 1}
			\int \mathbbm{1}_{v_\e> w_\e +c}\vert w_\e^\prime\vert^2 \leq  K(m) \frac{\| v^\prime\|_{L^2}^2 \| v\|_{L^\infty}^2 }{c^2}+ \frac{4 \|v\|_{L^\infty}^2}{c^2} \int v^2      \frac{1}{w_\e^2}  \vert  w_\e^\prime\vert^2.
		\end{equation}
		Let us now treat the following term of \eqref{est 1}:
		$$
		\int v^2  \frac{1}{w_\e^2}  \vert w_\e^\prime\vert^2.
		$$
		We have
		\begin{align}\label{H1conv1}
			\begin{split}
				-\int v^2\frac{1}{w_\e^2} \vert w_\e^\prime\vert^2 = \int v^2 \left (\frac{1}{w_\e} \right )^\prime w_\e^\prime &= -2 \int v v^\prime \frac{ w_\e^\prime}{w_\e}- \int v^2\frac{1}{w_\e}w_\e^{\prime\prime} \\
				&\leq \frac{1}{2m}\int v^2\frac{1}{w_\e^2}\vert w_\e^\prime\vert^2+ 2m\int \vert  v^\prime\vert^2- \int v^2\frac{1}{w_\e}(\eta_\e^{\prime\prime})_+* w.
			\end{split}
		\end{align}
		In the last line we omitted the term with $(\eta_\ve^{\prime\prime})_-*w$ since it is equal to a multiple of $w$, and $vw=0$ (since $v$ and $w$ are the positive and the negative parts of the same function). 
		
		Let us handle the last term on the right-hand side of the above inequality. We have
		$$
		\int v^2\frac{1}{w_\e}(\eta_\e^{\prime\prime})_+* w = \int v^2\frac{1}{w_\e^2}(w* \eta_\e) \left((\eta_\e^{\prime\prime})_+* w\right)\geq \frac{1+m}{m} \int v^2\frac{1}{w_\e^2} \vert   w^\prime_\e\vert^2,
		$$
		where we used \eqref{wepweppweppp} to establish the last inequality.
		Putting this in \eqref{H1conv1}, we get
		$$
		\int v^2\frac{1}{w_\e^2} \vert  w^\prime_\e\vert^2
		\leq 4m^2\int \vert  v^\prime\vert^2.
		$$
		Now, putting this into \eqref{est 1} and into \eqref{dec1} yields the final relation.
	\end{proof}

	\section*{Acknowledgements}
	RD was supported by the IDEXLyon via the Impulsion project of FS ``Optimal Transport and Congestion Games'' PFI 19IA106UDL and by a French PEPS JCJC 2021 grant.
	HY was supported by the European Research Council (ERC) under the European Union’s Horizon 2020 research and innovation programme (grant agreement No 865711) (until October 2021) and partially by the Vienna Science and Technology Fund (WWTF) with a Vienna Research Groups for Young Investigators project, grant VRG17-014 (from October 2021). FS also acknowledges the support of the Lagrange Mathematics and Computation Research Center. 
	
	Finally, the authors warmly thank Alexey Kroshnin for an interesting counter-example showing that in higher dimension there is no convolution kernel for which the very same result of Proposition \ref{prop6.1} can hold.

	\bibliography{Cross-Diff}
\end{document}